\providecommand{\tabularnewline}{\\}
\theoremstyle{plain}
\newtheorem{thm}{\protect\theoremname}
\theoremstyle{plain}
\newtheorem{prop}[thm]{\protect\propositionname}
\theoremstyle{plain}
\newtheorem{lem}[thm]{\protect\lemmaname}
\definecolor{green}{rgb}{0,0.6,0.0} 
\providecommand{\lemmaname}{Lemma}
\providecommand{\propositionname}{Proposition}
\providecommand{\theoremname}{Theorem}
\begin{document}
\global\long\def\rn{\mathbb{R}^{n}}%
\global\long\def\r{\mathbb{R}}%
\global\long\def\n{\mathbb{N}}%
\global\long\def\c{\mathbb{C}}%
\global\long\def\pt{\mathbb{\partial}}%
\global\long\def\lam{\lambda}%
\global\long\def\argmin{\operatorname*{argmin}}%
\global\long\def\Argmin{\operatorname*{Argmin}}%
\global\long\def\argmax{\operatorname*{argmax}}%
\global\long\def\dom{\operatorname*{dom}}%
\global\long\def\ri{\operatorname*{ri}}%
\global\long\def\diag{\operatorname*{diag}}%
\global\long\def\Diag{\operatorname*{Diag}}%
\global\long\def\inner#1#2{\langle#1,#2\rangle}%
\global\long\def\dg{\operatorname*{dg}}%
\global\long\def\trc{\operatorname*{tr}}%
\global\long\def\Dg{\operatorname*{Dg}}%
\global\long\def\cConv{\overline{{\rm Conv}}\ }%
\global\long\def\conv{\operatorname*{conv}}%

\global\long\def\acgX#1{z_{#1}}%
\global\long\def\acgXTilde#1{\tilde{z}_{#1}}%
\global\long\def\acgU#1{r_{#1}}%
\global\long\def\acgY#1{z_{#1}^{c}}%
\global\long\def\acgMatX#1{\uppercase{z}_{#1}}%
\global\long\def\acgMatXTilde#1{\tilde{\uppercase{z}}_{#1}}%
\global\long\def\acgMatU#1{\uppercase{r}_{#1}}%
\global\long\def\acgMatY#1{\uppercase{z}_{#1}^{c}}%
\global\long\def\icgY#1{y_{#1}}%
\global\long\def\icgMatY#1{\uppercase{y}_{#1}}%
\global\long\def\aicgY#1{y_{#1}^{a}}%
\global\long\def\aicgYMin#1{y_{#1}}%
\global\long\def\aicgXTilde#1{\tilde{x}_{#1}}%
\global\long\def\aicgXhat#1{\widehat{x}_{#1}}%
\global\long\def\aicgX#1{x_{#1}}%
\global\long\def\mTick{-}%
\global\long\def\pTick{+}%
\global\long\def\gammaBFn{\gamma}%
\global\long\def\gammaBTFn{\widetilde{\gamma}}%
\global\long\def\gammaFn#1{\gammaBFn\left(#1\right)}%
\global\long\def\gammaTFn#1{\gammaBTFn\left(#1\right)}%
\global\long\def\Y{y^{a}}%
\global\long\def\YMin{y}%
\global\long\def\Xt{\tilde{x}}%
\global\long\def\X{x}%
\global\long\def\Xh{\widehat{\X}}%
\global\long\def\YM{\YMin}%
\global\long\def\XtM{\Xt}%
\global\long\def\XM{\X}%
\global\long\def\aM{a}%
\global\long\def\AM{A}%
\global\long\def\DM{D}%
\global\long\def\deltaM{\delta}%
\global\long\def\YP{\Y_{\pTick}}%
\global\long\def\YMinP{\YMin_{\pTick}}%
\global\long\def\XtP{\Xt_{\pTick}}%
\global\long\def\XhP{\Xh_{\pTick}}%
\global\long\def\XP{\X_{\pTick}}%
\global\long\def\AP{A_{\pTick}}%
\global\long\def\thetaM{\pi}%
\global\long\def\thetaP{\pi_{\pTick}}%

\title{Accelerated Inexact Composite Gradient Methods for\\
Nonconvex Spectral Optimization Problems\thanks{The works of these authors     were partially supported by ONR Grant N00014-18-1-2077,
AFOSR Grant FA9550-22-1-0088,
NSERC Grant PGSD3-516700-2018, and the IDEaS-TRIAD Fellowship (NSF Grant CCF-1740776). The first author has been supported by the US Department of Energy (DOE) and UT-Battelle, LLC, under contract DE-AC05-00OR22725 and also supported by the Exascale Computing Project (17-SC-20-SC), a collaborative effort of the U.S. Department of Energy Office of Science and the National Nuclear Security Administration.}}
\author{Weiwei Kong \thanks{Computer Science and Mathematics Division, Oak Ridge National Laboratory, Oak Ridge, TN, 37830 (E-mail: {\tt        wwkong92@gmail.com}). }
	\and Renato D.C. Monteiro\thanks{School of Industrial and Systems     Engineering, Georgia Institute of     Technology, Atlanta, GA, 30332-0205. (E-mail: {\tt monteiro@isye.gatech.edu}). }}
\date{July 22, 2020 (Revised: July 7, 2021 and February 9, 2022)}
\maketitle
\begin{abstract}
	This paper presents two inexact composite gradient methods, one inner
	accelerated and another doubly accelerated, for solving a class of
	nonconvex spectral composite optimization problems. More specifically,
	the objective function for these problems is of the form $f_{1}+f_{2}+h$,
	where $f_{1}$ and $f_{2}$ are differentiable nonconvex matrix functions
	with Lipschitz continuous gradients, $h$ is a proper closed convex
	matrix function, and both $f_{2}$ and $h$ can be expressed as functions
	that operate on the singular values of their inputs. The methods essentially
	use an accelerated composite gradient method to solve a sequence of
	proximal subproblems involving the linear approximation of $f_{1}$
	and the singular value functions underlying $f_{2}$ and $h$. Unlike
	other composite gradient-based methods, the proposed methods take
	advantage of both the composite and spectral structure underlying
	the objective function in order to efficiently generate their solutions.
	Numerical experiments are presented to demonstrate the practicality
	of these methods on a set of real-world and randomly generated spectral
	optimization problems. \\
	
	\textbf{Keywords}: composite nonconvex problem, iteration complexity, inexact composite gradient method, first-order accelerated gradient method, spectral optimization.
\end{abstract}

\section{\label{sec:intro}Introduction}

\noindent There are numerous applications in electrical engineering,
machine learning, and medical imaging that can be formulated as nonconvex
spectral optimization problems of the form 
\begin{equation}
\min_{U\in\r^{m\times n}}\left\{ \phi(U):=f_{1}(U)+(\underbrace{f_{2}^{{\cal V}}\circ\sigma}_{f_2})(U)+(\underbrace{h^{{\cal V}}\circ\sigma}_h)(U)\right\} ,\label{eq:intro_prb}
\end{equation}
where $\sigma$ is the function that maps a matrix to its singular
value vector (in nonincreasing order of magnitude), $f_{1}$ and $f_{2}^{{\cal V}}$
are continuously differentiable functions with Lipschitz continuous
gradients, and $h^{{\cal V}}$ is a proper, lower semicontinuous,
convex function. For this paper, we are interested in solving instances of \eqref{eq:intro_prb} where: (i) the resolvents of $\lam\pt h$ and $\lam\pt h^{{\cal V}}$,
i.e., evaluations of the operators $(I+\lam\pt h)^{-1}$ and $(I+\lam\pt h^{{\cal V}})^{-1}$,
are easy compute for any $\lam>0$; (ii) the resolvents of $\lam (\nabla f_2 + \partial h)$ and $\lam (\nabla f_2^{\cal V} + \partial h^{\cal V})$ cannot be computed exactly for any $\lam > 0$; and (iii) both $f_{2}^{{\cal V}}$
and $h^{{\cal V}}$ are absolutely symmetric in their arguments, i.e.,
they do not depend on the ordering or the sign of their arguments.

We now describe some practical instances of \eqref{eq:intro_prb} that satisfy all three assumptions above. To avoid repetition, we let ${\cal R}={\cal R}_{s}+{\cal R}_{n}$
and ${\cal P}$ be two sparsity-inducing regularizers, where ${\cal R}_{s}$ and ${\cal P}$ are continuously
differentiable functions with Lipschitz continuous gradients and ${\cal R}_{n}$
is a proper, lower semicontinuous, and convex function. 
\begin{itemize}
    \item \textit{Matrix Completion}. Let $A\in\r^{m\times n}$ be a given data matrix and let $r=\min\{m,n\}$.
Moreover, let $\Omega$ denote a subset of the indices of $A$. The
goal of the general matrix completion problem is to find a low rank
approximation of $A$ that is close to $A$ in some sense. A nonconvex formulation (see, for example, \cite{Yao2017}) of this
problem is 
\[
\min_{X\in\r^{m\times n}}\left\{ \frac{1}{2}\|P_{\Omega}(X-A)\|_{F}^{2}+({\cal R}\circ\sigma)(X)\right\} ,
\]
where $P_{\Omega}$ is the function that zeros out the entries of
its input that are not in $\Omega$. Note that
this problem is a special instance of \eqref{eq:intro_prb}
in which $f_1 = \|P_{\Omega}(\cdot)-A\|_F^2 / 2$, $f_2^{\cal V}={\cal R}_s$, and $h^{\cal V}={\cal R}_n$.
\item \textit{Phase Retrieval}. Given a vector $x\in\rn$, let $x[\omega]$ denote its discrete Fourier
transform for some frequency $\omega$. Moreover, for some unknown
noisy signal $\tilde{x}\in\r^{n}$ and a frequency set $\Omega\subseteq\r_{+}$,
suppose that we are given measurements $\{|\tilde{x}[\omega]|\}_{\omega\in\Omega}$
and vectors $a_{\omega}\in\c^{n}$ such that $|\left\langle a_{\omega},\tilde{x}\right\rangle |=|\tilde{x}[\omega]|$
for every $\omega\in\Omega$. The goal of the phase retrieval problem
is to recover an approximation $x$ of $\tilde{x}$ such that $|\left\langle a_{\omega},x\right\rangle |^{2}\approx|\left\langle a_{\omega},\tilde{x}\right\rangle |^{2}$
for every $\omega\in\Omega$.
A nonconvex formulation of this problem is 
\[
\begin{aligned}\min_{X\in\r^{|\Omega|\times|\Omega|}}\ \left\{ \frac{1}{2}\|{\cal A}(X)-b\|^{2}+({\cal R}\circ\lam)(X):X\succeq0\right\} ,\end{aligned}
\]
where $\lam$ denotes the function that maps matrices to their eigenvalue
vector, $X\succeq0$ means that $X$ is symmetric positive semidefinite,
and the quantities ${\cal A}:\r^{|\Omega|\times|\Omega|}\mapsto\r^{|\Omega|}$
and $b\in\r^{|\Omega|}$ are given by 
\[
\left[{\cal A}(X)\right]_{\omega}=\trc(a_{\omega}a_{\omega}^{*}X),\quad b_{\omega}=|\tilde{x}[\omega]|^{2},\quad\forall(X,\omega)\in\r^{|\Omega|\times|\Omega|}\times\Omega.
\]
Note that
this problem is a special instance of \eqref{eq:intro_prb}
in which  $f_1 = \|{\cal A}(\cdot)-b\|_F^2 / 2$, $f_2^{\cal V}={\cal R}_s$, and $h^{\cal V}={\cal R}_n + \delta_{\mathbb{R}_+^{|\Omega|}}$ where $\delta_{\mathbb{R}_+^{|\Omega|}}$ is the indicator for the nonnegative orthant of $\mathbb{R}^{|\Omega|}$.
It is worth mentioning that this formulation is a generalization of the one in
\cite{Candes2015} where the convex function $\trc X$ is replaced
with the nonconvex function ${\cal R}$.
\item \textit{Robust Principal Component Analysis}. Let $\widehat{M}\in\r^{m\times n}$ be a given data matrix and let
$r=\min\{m,n\}$. The goal of the robust principal component analysis
problem is to find an approximation $M+E$ of $\widehat{M}$ where
$M$ is low-rank and $E$ is sparse.
A nonconvex formulation of this problem is 
\[
\begin{aligned}\min_{M,E\in\r^{m\times n}}\ \left\{ \frac{1}{2}\|\widehat{M}-(M+E)\|_{F}^{2}+({\cal R}\circ\sigma)(M)+{\cal P}(E)\right\} .\end{aligned}
\]
Note that
this problem is a special instance of \eqref{eq:intro_prb}
in which  $f_1 = \|\widehat{M}-[(\cdot)-E]\|_F^2 / 2 + {\cal P}$, $f_2^{\cal V}={\cal R}_s$, and $h^{\cal V}={\cal R}_n$.
It is worth mentioning that this formulation is a instance of the one in \cite{Wen2019}
where more structure is imposed on the functions ${\cal R}$ and ${\cal P}$. 
\end{itemize}

A natural approach for finding approximate stationary points of the above instances is to employ
the \textit{exact} composite gradient (ECG) method
that, when applied to \eqref{eq:intro_prb}, \textit{exactly} solves a sequence of matrix subproblems of the form
\begin{equation}
\min_{U\in\r^{m\times n}}\ \left\{ \tilde{\lam}_k \left[\left\langle \nabla(f_{1}+f_{2})(\icgMatY{k-1}),U\right\rangle +h(U)\right]+\frac{1}{2}\|U-\icgMatY{k-1}\|^{2}_F\right\} ,\label{eq:ecg_prox}
\end{equation}
where $\lam_k >0$  is an appropriately chosen stepsize and the point $\icgMatY{k-1}$ is the previous iterate. 
Its computation primarily consists of computing  a singular value decomposition (SVD) at the point $\tilde{Y}_k := \icgMatY{k-1} - \tilde{\lam}_k \nabla(f_1 + f_2)(\icgMatY{k-1})$ and an evaluation of 
the resolvent of $\tilde{\lam}_k \pt h^{\cal V}$ at $\sigma(\tilde{Y}_k)$.
Accelerated ECG (A-ECG) methods solve subproblems similar to \eqref{eq:ecg_prox} but with $Y_{k-1}$ selected in an accelerated manner. 
Notice that both of these approaches do not exploit the spectral structure in
$f_{2}$.

Our goal in this paper is to develop two efficient \textit{inexact} composite
gradient (ICG) methods that find approximate stationary points of \eqref{eq:intro_prb} by exploiting the spectral structure in \textit{both} $f_2$ and $h$.
Our first prototype, called the static inner accelerated ICG (IA-ICG) method,
\textit{inexactly} solves a sequence of matrix prox subproblems of the form
\begin{align}
\min_{U\in\r^{m\times n}}\ \left\{ \lam_k \left[\left\langle \nabla f_{1}(\icgMatY{k-1}),U\right\rangle +f_{2}(U)+h(U)\right]+\frac{1}{2}\|U-\icgMatY{k-1}\|^{2}_F\right\} \label{eq:icg_mat_prox}
\end{align}
where $\lam_k >0$ is an appropriately chosen stepsize and the point $\icgMatY{k-1}$ is the previous iterate.  
It is shown (see Subsection~\ref{subsec:spectral_exploit}) that
the effort of finding the required inexact solution $\icgMatY k$
of \eqref{eq:icg_mat_prox} consists of computing one SVD and applying an accelerated gradient (ACG) algorithm to find an approximate solution
to the related vector prox subproblem 
\begin{equation}
\min_{u\in\r^{r}}\left\{ \lam_k \left[f_{2}^{{\cal V}}(u)-\left\langle c_{k-1},u\right\rangle +h^{{\cal V}}(u)\right]+\frac{1}{2}\|u\|^{2}\right\} \label{eq:icg_vec_prox}
\end{equation}
where $r=\min\{m,n\}$ and $c_{k-1}=\sigma(\icgMatY{k-1}-\lam_k \nabla f_{1}(\icgMatY{k-1}))$.
Notice that \eqref{eq:icg_vec_prox} is a problem over the vector space
$\r^{r}$, and hence, has significantly fewer dimensions than \eqref{eq:icg_mat_prox}
which is a problem over the matrix space $\r^{m\times n}$. The other
prototype, called the static doubly accelerated ICG (DA-ICG), solves
a subproblem similar to \eqref{eq:icg_mat_prox} but with
$\icgMatY{k-1}$ selected in an accelerated manner (and hence its
qualifier ``doubly accelerated''). 
Notice that the static IA-ICG (resp. DA-ICG) can be viewed as an inexact
version of ECG (resp. A-ECG) where, instead of $h$ in \eqref{eq:ecg_prox}, the function $f_2 + h$ is viewed as the composite term, i.e., the part that is not linearized in the subproblems.
Moreover, neither IA-ICG nor DA-ICG are able to 
 solve \eqref{eq:icg_mat_prox} (or its accelerated version) exactly due to assumption (ii) made in the first 
paragraph of this section. \\

\emph{Motivation of our approach}. 
For high-dimensional instances of \eqref{eq:intro_prb}
where $r=\min\{m,n\}$ is large, 
we have that the larger the Lipschitz constant of $\nabla f_{2}^{{\cal V}}$
is, the better the performance of the ICG methods is compared to the performance of their exact counterparts.
This fact immediately follows from the following two claims:
\begin{itemize}
\item[(i)] the ICG methods inexactly solve fewer matrix subproblems compared to their exact counterparts when the Lipschitz constant of $\nabla f_{2}^{{\cal V}}$ is large; and
\item[(ii)] the work of exactly
solving \eqref{eq:ecg_prox} or inexactly solving \eqref{eq:icg_mat_prox} is comparable when $r$ is large.
\end{itemize}

The justification of claim (i) is as follows.
First, recall that the
larger the stepizes $\lam_k$'s (resp. $\tilde{\lam}_k$)
are, the smaller the number of generated subproblems
\eqref{eq:icg_mat_prox} (resp. \eqref{eq:ecg_prox}) is.
Second, the CG stepsizes chosen in either \eqref{eq:ecg_prox} or \eqref{eq:icg_mat_prox} 
to guarantee
convergence of the underlying CG method are inversely proportional to the Lipschitz constant of the gradient of the function being linearized. 
Hence, since the inexact CG methods linearize $f_1$ only and the exact CG methods linearize both $f_1$ and
$f_2$, claim (i) follows.  Some specific
applications where the Lipschitz constant of $\nabla f_{2}^{{\cal V}}$
may be large in practice can be found, for example,
in \cite{ahn2017difference, Yao2017, wen2018proximal}.
The justification for claim (ii) is due to 
the following two observations:
(a) all of the above CG methods require one SVD per subproblem; and 
(b) when $r$ is large, the computational bottleneck for solving a single subproblem is the aforementioned SVD.
\\

\textit{Contributions and Main results.} To the best of our knowledge, this paper is the first to present ICG methods that exploit both the spectral and
composite structure in \eqref{eq:intro_prb}. 

When $f_2$ is convex or, more generally, a key inequality is satisfied at every iteration of ACG applied to \eqref{eq:icg_vec_prox}, it is shown that for any given $\hat{\rho}>0$, both
the static IA-ICG and the static DA-ICG always obtain a pair $(\hat{\icgMatY{}},\hat{V})$
satisfying the approximate stationarity condition
\begin{equation}
    \hat{V}\in\nabla f_{1}(\hat{\icgMatY{}})+\nabla f_{2}(\hat{\icgMatY{}})+\pt h(\hat{\icgMatY{}}),\quad\|\hat{V}\|\leq\hat{\rho}. \label{eq:intro_rho_approx}
\end{equation}
by inexactly solving
${\cal O}(\hat{\rho}^{-2})$ matrix prox subproblems
as in \eqref{eq:icg_mat_prox}. If, in addition,
$f_1$ is convex, 
it is shown that this bound improves to ${\cal O}(\hat{\rho}^{-2/3})$ for the static  DA-ICG method.

When $f_2$ is nonconvex, the static IA-ICG and the static DA-ICG may fail to obtain a pair
as in \eqref{eq:intro_rho_approx}. To remedy this, we develop dynamic IA-ICG and DA-ICG methods that repeatedly invoke their static counterparts to solve \eqref{eq:intro_prb} with $(f_1,f_2)$ replaced by $(f_{1,\xi},f_{2,\xi})=(f_1 - \xi\|\cdot\|^2/2, f_2 + \xi\|\cdot\|^2/2)$ for strictly increasing values of $\xi>0$. 
These dynamic versions always obtain a pair as in \eqref{eq:intro_rho_approx} because: (i) $f_1 + f_2 = f_{1,\xi} + f_{2,\xi}$ for every $\xi>0$ and (ii) there always exists $\underline{\xi}>0$ such that $f_{2,\underline{\xi}}$ is convex due to the fact that $\nabla f_2$ is Lipschitz continuous.

Numerical experiments are also given to demonstrate the practicality
of our proposed methods. More specifically, our experiments demonstrate
that the dynamic methods are substantially faster (usually 10x) than other
first-order methods at minimizing the primal residual $\|\hat{V}||$
in terms of runtime.
\\


\emph{Related works}. The earliest complexity analysis of an ACG method
for solving nonconvex composite problems like the one in \eqref{eq:intro_prb}
is given in \cite{nonconv_lan16}. Building on the results in \cite{nonconv_lan16},
many other papers \cite{Paquette2017,LanUniformly,Liang2019} have
proposed similar ACG-based methods.

Another common approach for solving problems like \eqref{eq:intro_prb}
is to employ an inexact proximal point method where each prox subproblem
is constructed to be convex, and hence, solvable by an ACG variant.
For example, papers \cite{Aaronetal2017,rohtua,WJRproxmet1,WJRVarLam2018}
present inner accelerated inexact proximal point methods whereas \cite{Liang2018}
presents a doubly accelerated inexact proximal point method.\\

\emph{Organization of the paper.} Subsection~\ref{subsec:notation}
gives some notation and basic definitions.
Section~\ref{sec:background} presents some necessary background
material for describing the ICG methods. Section~\ref{sec:accelerated_icg}
is split into three subsections. The first one precisely describes
the problem of interest, while the last two present the IA-ICG and
DA-ICG methods. Section~\ref{sec:spectral_details} describes an
efficient way of solving problem \eqref{eq:icg_mat_prox} by modifying
a solution of problem \eqref{eq:icg_vec_prox}. Section~\ref{sec:computational}
presents some numerical results. Section~\ref{sec:icg_cvg_rate}
establishes the iteration complexity of the ICG methods. Finally,
some auxiliary results are presented in Appendices~\ref{app:subdiff}
to \ref{app:spectral}.

\subsection{Notation and Basic Definitions}

\label{subsec:notation}

This subsection provides some basic notation and definitions.

The set of real numbers is denoted by $\r$. The set of non-negative
real numbers and the set of positive real numbers is denoted by $\r_{+}$
and $\r_{++}$ respectively. The set of natural numbers is denoted
by $\n$. The set of complex numbers is $\c$. The set of unitary
matrices of size $n$--by--$n$ is ${\cal U}^{n}$. For $t>0$,
define $\log_{1}^{+}(t):=\max\{1,\log(t)\}$. Let $\rn$ denote a
real--valued $n$--dimensional Euclidean space with norm $\|\cdot\|$.
Given a linear operator $A:\rn\mapsto\r^{p}$, the operator norm of
$A$ is denoted by $\|A\|:=\sup\{\|Az\|/\|z\|:z\in\rn,z\neq0\}$.
Using the asymptotic notation ${\cal O}$, we denote ${\cal O}_{1}(\cdot)\equiv{\cal O}(1+\cdot)$.

Let $(m,n)\in\n^{2}$ and let $r=\min\{m,n\}$. Given matrices $X\in\r^{m\times n}$
and $Y\in\r^{n\times n}$, let the quantities $\sigma(X)$ and $\lam(Y)$
denote the singular values and eigenvalues of $X$ and $Y$, respectively,
in nonincreasing order. Let ${\rm dg}:\r^{r}\mapsto\r^{r\times r}$
and ${\rm Dg}:\r^{m\times n}\mapsto\r^{r}$ be given pointwise by
\[
\left[\dg z\right]_{ij}=\begin{cases}
z_{i}, & \text{if }i=j,\\
0, & \text{otherwise},
\end{cases}\quad\left[\Dg Z\right]_{i}=Z_{ii},
\]
for every $z\in\r^{r},Z\in\r^{m\times n},$ and $(i,j)\in\{1,...,r\}^{2}$.

The following notation and definitions are for a general complete
inner product space ${\cal Z}$, whose inner product and its associated
induced norm are denoted by $\left\langle \cdot,\cdot\right\rangle $
and $\|\cdot\|$ respectively. Let $\psi:{\cal Z}\mapsto(-\infty,\infty]$
be given. The effective domain of $\psi$ is denoted by $\dom\psi:=\{x\in{\cal {\cal Z}}:\psi(x)<\infty\}$
and $\psi$ is said to be proper if $\dom\psi\neq\emptyset$. For
$\varepsilon\geq0$, the $\varepsilon$-subdifferential\emph{ }of
$\psi$ at $x\in\dom\psi$ is denoted by 
\[
\pt_{\varepsilon}\psi(z):=\left\{ w\in\rn:\psi(z')\geq\psi(z)+\left\langle w,z'-z\right\rangle -\varepsilon,\forall z'\in{\cal {\cal Z}}\right\} ,
\]
and we denote $\pt\psi\equiv\pt_{0}\psi$. The set of proper, lower
semi-continuous, convex functions is denoted by $\cConv{\cal Z}$.
The convex conjugate $\psi$ is denoted by $\psi^{*}$. The linear
approximation of $\psi$ at a point $z_{0}\in\dom\psi$ is denoted
by $\ell_{\psi}(\cdot;z_{0}):=\psi(z_{0})+\left\langle \nabla\psi(z_{0}),\cdot-z_{0}\right\rangle $.
The indicator of a closed convex set $C\subseteq{\cal Z}$ at a point
$z\in{\cal Z}$ is denoted by $\delta_{C}(z)$, which is $1$ if $z\in C$
and $\infty$ otherwise. The local Lipschitz constant of $\nabla\psi$
at two points $u,z\in{\cal Z}$ is denoted by 
\begin{equation}
L_{\psi}(x,y)=\begin{cases}
\frac{\|\nabla\psi(x)-\nabla\psi(y)\|}{\|x-y\|}, & x\neq y,\\
0, & x=y,
\end{cases}\quad\forall x,y\in\dom\psi.\label{eq:local_Lipsh}
\end{equation}

\section{Background Material}

\label{sec:background}

Recall from Section~\ref{sec:intro} that our interest is in solving
\eqref{eq:intro_prb} by repeatedly solving a sequence of prox subproblems
as in \eqref{eq:icg_mat_prox}. This section presents some background
material regarding \eqref{eq:icg_mat_prox}. 

This section considers the nonconvex composite optimization (NCO)
problem 
\begin{equation}
\min_{u\in{\cal Z}}\left\{ \psi(u):=\psi_{s}(u)+\psi_{n}(u)\right\} ,\label{eq:acg_motivating_prb}
\end{equation}
where ${\cal Z}$ is a finite dimensional inner product space and
the functions $\psi_{s}$ and $\psi_{n}$ are assumed to satisfy the
following assumptions: 
\begin{itemize}
\item[(B1)] $\psi_{n}\in\overline{{\rm Conv}}\ {\cal Z}$; 
\item[(B2)] $\psi_{s}$ is continuously differentiable on ${\cal Z}$ and satisfies
$\psi_{s}(u)-\ell_{\psi_{s}}(u;\icgY{})\leq M\|u-\icgY{}\|^{2}/2$
for some $M\geq0$ and every $u,\icgY{}\in{\cal Z}$. 
\end{itemize}
Clearly, problems \eqref{eq:intro_prb} and \eqref{eq:icg_mat_prox}
are special cases of \eqref{eq:acg_motivating_prb}, and hence any
definition or result that is stated in the context of \eqref{eq:acg_motivating_prb}
applies to \eqref{eq:intro_prb} and/or \eqref{eq:icg_mat_prox}.

An important notion of an approximate solution of \eqref{eq:acg_motivating_prb}
is as follows: given $\hat{\rho}>0$, a pair $(\icgY r,v_{r})$ is
said to be a $\hat{\rho}$--approximate solution of \eqref{eq:acg_motivating_prb}
if 
\begin{equation}
v_{r}\in\nabla\psi_{s}(\icgY r)+\pt\psi_{n}(\icgY r),\quad\|v_{r}\|\leq\hat{\rho}.\label{eq:gen_rho_approx_solution}
\end{equation}
In Section~\ref{sec:accelerated_icg}, we develop prox-type methods
for finding $\hat{\rho}$--approximate solutions of \eqref{eq:intro_prb}
that repeatedly solve \eqref{eq:icg_mat_prox} inexactly by taking
advantage of its spectral decomposition. 

We now discuss the inexactness criterion under which the subproblems
\eqref{eq:icg_mat_prox} are solved. Again, the criterion is described
in the context of \eqref{eq:acg_motivating_prb} as follows.

\vspace*{1em}
 \begin{mdframed}[nobreak=true] \textbf{Problem} ${\cal {A}}:$
Given $(\mu,\theta)\in\r_{++}^{2}$ and $\acgX 0\in\mathcal{Z}$,
find $(\icgY{},v,\varepsilon)\in\dom\psi\times\mathcal{Z}\times\mathbb{R}_{+}$
such that 
\begin{equation}
v\in\partial_{\varepsilon}\left(\psi-\frac{\mu}{2}\|\cdot-\icgY{}\|^{2}\right)(\icgY{}),\quad\|v\|^{2}+2\varepsilon\le\theta^{2}\|\icgY{}-\acgX 0\|^{2}.\label{eq:cvx_inexact}
\end{equation}
\end{mdframed} \vspace*{1em}

We begin by making three remarks about the above problem. First, if
$(\icgY{},v,\varepsilon)$ solves Problem~${\cal {A}}$ with $\theta=0$,
then $(v,\varepsilon)=(0,0)$, and $z$ is an exact solution of \eqref{eq:acg_motivating_prb}.
Hence, the output $(\icgY{},v,\varepsilon)$ of Problem~${\cal {A}}$
can be viewed as an inexact solution of \eqref{eq:acg_motivating_prb}
when $\theta\in\r_{++}$. Second, the input $\acgX 0$ is arbitrary
for the purpose of this section. However, the two methods described
in Section~\ref{sec:accelerated_icg} for solving \eqref{eq:intro_prb}
repeatedly solve \eqref{eq:icg_mat_prox} according to Problem~${\cal A}$
with the input $\acgX 0$ at the $k^{{\rm th}}$ iteration determined
by the iterates generated at the $(k-1)^{{\rm th}}$ iteration. Third,
defining the function 
\begin{equation}
\Delta_{\mu}(u;\icgY{},v):=\psi(\icgY{})-\psi(u)-\inner v{\icgY{}-u}+\frac{\mu}{2}\|u-\icgY{}\|^{2}\quad\forall u\in\dom\psi, \label{eq:Delta_def}
\end{equation}
another way to express the inclusion in \eqref{eq:cvx_inexact} is
$\Delta_{\mu}(u;\icgY{},v)\leq\varepsilon$ for every $u\in\dom\psi$.
Finally, the relaxed ACG (R-ACG) algorithm presented later in this
subsection will be shown to solve Problem~${\cal A}$ when $\psi_{s}$
is convex. Moreover, it solves a weaker version of Problem~${\cal A}$
involving $\Delta_{\mu}$ (see Problem~${\cal B}$ later on) whenever
$\psi_{s}$ is not convex and as long as some key inequalities are
satisfied during its execution.

A technical issue in our analysis in this paper lies in the ability
of refining the output of Problem~${\cal A}$ to an approximate solution
$(\icgY r,v_{r})$ of \eqref{eq:acg_motivating_prb}, i.e., one satisfying
the inclusion in \eqref{eq:gen_rho_approx_solution}, in which $\|v_{r}\|$
is nicely bounded. We now present a refinement procedure that addresses
this issue.

\noindent %
\noindent\begin{minipage}[t]{1\columnwidth}%
\noindent \rule[0.5ex]{1\columnwidth}{1pt}

\noindent \textbf{Refinement Procedure}

\noindent \rule[0.5ex]{1\columnwidth}{1pt}%
\end{minipage}

\noindent \textbf{Input}: a triple $(M,\psi_{s},\psi_{n})$ satisfying
(B1)--(B2) and a pair $(\icgY{},v)\in\dom\psi_{n}\times{\cal Z}$;

\noindent \textbf{Output}: a pair $(\icgY r,v_{r})$ satisfying the
inclusion in \eqref{eq:gen_rho_approx_solution}; 
\begin{itemize}
\item[1.] set the quantities 
\begin{align}
\icgY r & =\argmin_{u\in{\cal Z}}\left\{ \left\langle \nabla\psi_{s}(\icgY{})-v,u\right\rangle +\frac{M}{2}\|u-\icgY{}\|^{2}+\psi_{n}(u)\right\} ,\label{eq:z_hat_def}\\
v_{r} & =v+M(\icgY{}-\icgY r)+\nabla\psi_{s}(\icgY r)-\nabla\psi_{s}(\icgY{}),\label{eq:v_hat_def}
\end{align}
and output $(\icgY r,v_{r})$. 
\end{itemize}
\noindent \rule[0.5ex]{1\columnwidth}{1pt}

The result below presents the key properties of the above procedure.
For the sake of brevity, we write $(\icgY r,v_{r})=RP(\icgY{},v)$
to indicate that the pair $(\icgY r,v_{r})$ is the output of the
above procedure with inputs $(M,\psi_{s},\psi_{n})$ and $(\icgY{},v)$. 
\begin{prop}
\label{prop:gen_refinement}Let $(M,\psi_{s},\psi_{n})$ satisfying
assumptions (B1)--(B2) and a triple $(\icgY{},v,\varepsilon)\in\dom\psi_{n}\times{\cal Z}\times\r_{+}$
be given. Moreover, let $(\icgY r,v_{r})=RP(\icgY{},v)$, denote $L_{\psi_{s}}(\cdot,\cdot)$
simply by $L(\cdot,\cdot)$ where $L_{\psi_{s}}(\cdot,\cdot)$ is
as in \eqref{eq:local_Lipsh}, and let $\Delta_{\mu}$ be as in \eqref{eq:Delta_def}.
Then, the following statements hold: 
\begin{itemize}
\item[(a)] $v_{r}\in\nabla\psi_{s}(\icgY r)+\pt\psi_{n}(\icgY r)$; 
\item[(b)]  $\Delta_{\mu}(\icgY r;\icgY{},v)\geq {M}\|\icgY r-\icgY{}\|^{2} / 2$;
\item[(c)] if $\Delta_{\mu}(\icgY r;\icgY{},v)\le\varepsilon$ and $(\icgY{},v,\varepsilon)$
satisfies the inequality in \eqref{eq:cvx_inexact}, then 
\begin{equation}
\|v_{r}\|\leq\theta\left[1+\frac{M+L(\icgY{},\icgY r)}{\sqrt{M}}\right]\|\icgY{}-\acgX 0\|;\label{eq:v_r_acg_bd}
\end{equation}
\item[(d)] if $(\icgY{},v,\varepsilon)$ solves Problem ${\cal A}$, then $\Delta_{\mu}(u;\icgY{},v)\le\varepsilon$
for every $u\in\dom\psi_{n}$, and, as a consequence, bound \eqref{eq:v_r_acg_bd}
holds.
\end{itemize}
\end{prop}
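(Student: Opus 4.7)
The plan is to deduce each part directly from the definitions in \eqref{eq:z_hat_def}--\eqref{eq:v_hat_def} together with the strong convexity of the refinement subproblem. For (a), I would write down the first-order optimality condition for the $M$-strongly convex subproblem in \eqref{eq:z_hat_def}, namely $0 \in \nabla\psi_s(\icgY{}) - v + M(\icgY r - \icgY{}) + \pt\psi_n(\icgY r)$, and rearrange so that $\nabla\psi_s(\icgY r)$ sits on the right-hand side. The left-hand side is then precisely $v_r$ as defined in \eqref{eq:v_hat_def}, so $v_r \in \nabla\psi_s(\icgY r) + \pt\psi_n(\icgY r)$.

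For (b), the plan is to split $\psi(\icgY{}) - \psi(\icgY r)$ into its smooth and nonsmooth pieces and bound each from below. The descent inequality from (B2) gives
\[
\psi_s(\icgY r) - \psi_s(\icgY{}) \leq \langle\nabla\psi_s(\icgY{}), \icgY r - \icgY{}\rangle + \frac{M}{2}\|\icgY r - \icgY{}\|^2,
\]
while the $M$-strong convexity of the objective in \eqref{eq:z_hat_def} (whose minimizer is $\icgY r$) evaluated at the test point $\icgY{}$ yields
\[
\psi_n(\icgY{}) - \psi_n(\icgY r) \geq \langle\nabla\psi_s(\icgY{}) - v, \icgY r - \icgY{}\rangle + M\|\icgY r - \icgY{}\|^2.
\]
Substituting both bounds into the definition of $\Delta_\mu$ in \eqref{eq:Delta_def}, the $\nabla\psi_s(\icgY{})$-terms cancel and the two linear-in-$v$ terms combine to zero, leaving $\Delta_\mu(\icgY r; \icgY{}, v) \geq (M + \mu)\|\icgY r - \icgY{}\|^2/2$, which implies (b).

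For (d), I would expand the $\varepsilon$-subdifferential inclusion in \eqref{eq:cvx_inexact}: it says $\psi(u) - (\mu/2)\|u - \icgY{}\|^2 \geq \psi(\icgY{}) + \langle v, u - \icgY{}\rangle - \varepsilon$ for every $u \in \dom\psi$, and rearranging is exactly $\Delta_\mu(u; \icgY{}, v) \leq \varepsilon$. Specializing to $u = \icgY r$ then reduces (d) to (c). For (c), combining (b) with the hypothesis $\Delta_\mu(\icgY r; \icgY{}, v) \leq \varepsilon$ gives $\|\icgY r - \icgY{}\| \leq \sqrt{2\varepsilon/M}$, and the triangle inequality applied to the definition of $v_r$ in \eqref{eq:v_hat_def} yields
\[
\|v_r\| \leq \|v\| + \bigl(M + L(\icgY{}, \icgY r)\bigr)\|\icgY r - \icgY{}\|.
\]
The norm bound in \eqref{eq:cvx_inexact} furnishes both $\|v\| \leq \theta\|\icgY{} - \acgX 0\|$ and $\sqrt{2\varepsilon} \leq \theta\|\icgY{} - \acgX 0\|$ separately, so substituting produces \eqref{eq:v_r_acg_bd}.

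I do not anticipate a substantive obstacle. The only delicate step is the sign bookkeeping in (b): the $-\langle v, \icgY r - \icgY{}\rangle$ arising from the strong-convexity lower bound on $\psi_n(\icgY{}) - \psi_n(\icgY r)$ must be aligned against the $-\langle v, \icgY{} - \icgY r\rangle$ appearing inside $\Delta_\mu$ so that the $v$-dependence drops out entirely. It is also worth noting that only the local Lipschitz constant $L(\icgY{}, \icgY r)$ enters (c), since $\nabla\psi_s$ is evaluated solely at the two points $\icgY{}$ and $\icgY r$ in the definition of $v_r$.
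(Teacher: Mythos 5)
Your proposal is correct and follows essentially the same route as the paper: part (a) via the optimality condition of \eqref{eq:z_hat_def}, part (b) via the descent inequality from (B2) combined with the $M$-strong convexity of the prox subproblem (the paper packages exactly these two steps as Proposition~\ref{prop:basic_refinement} applied to $\widetilde{\psi}_s=\psi_s-\langle v,\cdot\rangle$, and your retained $\mu$-term only sharpens the constant to $(M+\mu)/2$), and parts (c)--(d) via the same $\sqrt{2\varepsilon/M}$ bound, triangle inequality, and unpacking of the $\varepsilon$-subdifferential inclusion.
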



\begin{proof}
(a) Using the definition of $v_{r}$ and the optimality of $\icgY r$,
we have that 
\begin{align*}
v_{r}=v+M(\icgY{}-\icgY r)+\nabla\psi_{s}(\icgY r)-\nabla\psi_{s}(\icgY{})\in\nabla\psi_{s}(\icgY r)+\pt\psi_{n}(\icgY r).
\end{align*}

(b)  Let $(\icgY{},v)\in\dom\psi_{n}\times{\cal Z}$ be fixed, and
define $\widetilde{\psi}_{s}:=\psi_{s}-\langle v,\cdot\rangle$. Using
Proposition~\ref{prop:basic_refinement} with $(g,h,L)=(\widetilde{\psi}_{s},\psi_{n},M)$
and $(z,\hat{z})=(\icgY{},\icgY r)$, and the definition of $\Delta_\mu$ in \eqref{eq:Delta_def}, we have 
\begin{align*}
\frac{M}{2}\|\icgY{}-\icgY r\|^{2} & \leq(\widetilde{\psi}_{s}+\psi_{n})(\icgY{})-(\widetilde{\psi}_{s}+\psi_{n})(\icgY r)\\
 & = \psi(\icgY{})-\psi(\icgY r)-\langle v,\icgY{}-\icgY r\rangle \leq \Delta_{\mu}(\icgY r;\icgY{},v).
\end{align*}

(c) Using the assumption that $\Delta_{\mu}(\icgY r;\icgY{},v)\leq\varepsilon$,
part (b), and the inequality in \eqref{eq:cvx_inexact}, we have that
\begin{equation}
\|\icgY{}-\icgY r\|\leq\sqrt{\frac{2\Delta_{\mu}(\icgY r;\icgY{},v)}{M}}\leq\sqrt{\frac{2\varepsilon}{M}}\leq\frac{\theta}{\sqrt{M}}\|\icgY{}-\acgX 0\|.\label{eq:z_zr_bd}
\end{equation}
Using the triangle inequality, the definition of $L(\cdot,\cdot)$,
\eqref{eq:z_zr_bd} and the inequality in \eqref{eq:cvx_inexact}
again, we conclude that 
\begin{align*}
\|v_{r}\| & \leq\|v\|+\left[M+L(\icgY{},\icgY r)\right]\cdot\|\icgY{}-\icgY r\|\leq\theta\left[1+\frac{M+L(\icgY{},\icgY r)}{\sqrt{M}}\right]\|\icgY{}-\acgX 0\|.
\end{align*}
(d) The fact that $\Delta_{\mu}(u;\icgY{},v)\leq\varepsilon$ for
every $u\in\dom\psi_{n}$ follows immediately from the inclusion in
\eqref{eq:cvx_inexact} and the definition of $\Delta_{\mu}$ in \eqref{eq:Delta_def}.
The fact that \eqref{eq:v_r_acg_bd} holds now follows from part (c). 
\end{proof}
We make a few remarks about Proposition~\ref{prop:gen_refinement}.
First, it follows from (a) that $(\icgY r,v_{r})$ satisfies the inclusion
in \eqref{eq:gen_rho_approx_solution}. Second, it follows from (a)
and (c) that if $\theta=0$, then $(\varepsilon,v_{r})=(0,0)$, and hence
$\icgY r$ is an exact stationary point of \eqref{eq:acg_motivating_prb}.
In general, \eqref{eq:v_r_acg_bd} implies that the residual $\|v_{r}\|$
is directly proportional to $\|\icgY{}-w\|$, and hence, becomes smaller
as this quantity approaches \textit{zero}.

Inequality \eqref{eq:v_r_acg_bd} plays an important technical role
in the complexity analysis of the two prox-type methods of Section~\ref{sec:accelerated_icg}.
Sufficient conditions for its validity are provided in (c) and (d),
with (c) being the weaker one, in view of (d). When $\psi_{s}$ is
convex, it is shown that every iterate of the R-ACG algorithm presented
below always satisfies the inclusion in \eqref{eq:cvx_inexact}, and
hence, verifying the the validity of the sufficient condition in (c)
amounts to simply checking whether the inequality in \eqref{eq:cvx_inexact}
holds. When $\psi_{s}$ is not convex, verification of the inclusion
in \eqref{eq:cvx_inexact}, and hence the sufficient condition in
(d), is generally not possible, while the one in (c) is. This is a
major advantage of the sufficient condition in (c), which is exploited
in this paper towards the development of adaptive prox-type methods
which attempt to approximately solve \eqref{eq:acg_motivating_prb}
when $\psi_{s}$ is not convex.

For the sake of future reference, we now state the following problem
for finding a triple $(\icgY{},v,\varepsilon)$ satisfying the sufficient
condition in Proposition~\ref{prop:gen_refinement}(c). Its statement
relies on the refinement procedure preceding Proposition~\ref{prop:gen_refinement}.

\vspace*{1em}
 \begin{mdframed}[nobreak=true] \textbf{Problem} ${\cal {B}}:$
Given the same inputs as in Problem~${\cal A}$, find $(\icgY{},v,\varepsilon)\in\dom\psi\times\mathcal{Z}\times\mathbb{R}_{+}$
satisfying the inequality in \eqref{eq:cvx_inexact} and 
\begin{equation}
\Delta_{\mu}(\icgY r;\icgY{},v)\leq\varepsilon,\label{eq:prb_B_Delta_ineq}
\end{equation}
where $\Delta_{\mu}(\cdot;\cdot,\cdot)$ is as in \eqref{eq:Delta_def}
and $\icgY r$ is the first component of the refined pair $(\icgY r,v_{r})=RP(\icgY{},v)$.
\end{mdframed} \vspace*{1em}

We now state the aforementioned R-ACG algorithm which solves Problem~${\cal A}$
when $\psi_{s}$ is convex and solves Problem~${\cal B}$ whenever
$\psi_{s}$ is not convex and two key inequalities are satisfied,
one at every iteration (i.e., \eqref{eq:acg_ineq_invar}) and one
at the end of its execution.

\noindent %
\noindent\begin{minipage}[t]{1\columnwidth}%
\noindent \rule[0.5ex]{1\columnwidth}{1pt}

\noindent \textbf{R-ACG Algorithm}

\noindent \rule[0.5ex]{1\columnwidth}{1pt}%
\end{minipage}

\noindent \textbf{Input}: a quadruple $(\mu,M,\psi_{s},\psi_{n})$
satisfying (B1)--(B2) and a pair $(\theta,\acgX 0)$;

\noindent \textbf{Output}: a triple $(\icgY{},v,\varepsilon)$ that
solves Problem~${\cal B}$ or a \textit{failure} status; 
\begin{itemize}
\item[0.] define $\psi:=\psi_{s}+\psi_{n}$ and set $\acgY 0=\acgX 0$, $B_{0}=0$,
$\Gamma_{0}\equiv0$, and $j=1$; 
\item[1.] compute the iterates 
\begin{align*}
\xi_{j-1} & =\frac{1+\mu B_{j-1}}{M-\mu},\quad b_{j-1}=\frac{\xi_{j-1}+\sqrt{\xi_{j-1}^{2}+4\xi_{j-1}B_{j-1}}}{2},\\
B_{j} & =B_{j-1}+b_{j-1},\quad\acgXTilde{j-1}=\frac{B_{j-1}}{B_{j}}\acgX{j-1}+\frac{b_{j-1}}{B_{j}}\acgY{j-1},\\
\acgX j & =\argmin_{u\in{\cal Z}}\left\{ l_{\psi_{s}}(u;\acgXTilde{j-1})+\psi_{n}(u)+\frac{M}{2}\|u-\acgXTilde{j-1}\|^{2}\right\} ,\\
\acgY j & =\frac{1}{1+\mu B_{j}}\left[\acgY{j-1}-b_{j-1}(M-\mu)(\acgXTilde{j-1}-\acgX j)+\mu(B_{j-1}\acgY{j-1}+b_{j-1}\acgX j)\right];
\end{align*}
\item[2.] compute the quantities 
\begin{align*}
\tilde{\gamma}_{j} & =l_{\psi_{s}}(\cdot;\acgXTilde{j-1})+\psi_{n}+\frac{\mu}{2}\|\cdot-\acgXTilde{j-1}\|^{2},\\
\gamma_{j} & =\tilde{\gamma}_{j}(\acgX j)+(M-\mu)\left\langle \acgXTilde{j-1}-\acgX j,\cdot-\acgX j\right\rangle +\frac{\mu}{2}\|\cdot-\acgX j\|^{2},\\
\Gamma_{j} & =\frac{B_{j-1}}{B_{j}}\Gamma_{j-1}+\frac{b_{j-1}}{B_{j}}\gamma_{j},\quad\acgU j=\frac{\acgY 0-\acgY j}{B_{j}}+\mu(\acgY j-\acgX j),\\
\eta_{j} & = \max\left\{0,\psi(\acgX j)-\Gamma_{j}(\acgY j)-\left\langle \acgU j,\acgX j-\acgY j\right\rangle +\frac{\mu}{2}\|\acgX j-\acgY j\|^{2}\right\};
\end{align*}
\item[3.] if the inequality
\begin{gather}
\left(\frac{1}{1+\mu B_{j}}\right)\|B_{j}\acgU j+\acgX j-\acgX 0\|^{2}+2B_{j}\eta_{j}\leq\|\acgX j-\acgX 0\|^{2}\label{eq:acg_ineq_invar}
\end{gather}
holds, then go to step~4; otherwise, \textbf{stop} with a\emph{ failure}
status; 
\item[4.] if the inequality 
\begin{align}
\|\acgU j\|^{2}+2\eta_{j} & \leq\theta^{2}\|\acgX j-\acgX 0\|^{2},\label{eq:acg_ineq}
\end{align}
holds, then go to step~5; otherwise, go to step 1; 
\item[5.] set $(\icgY{},v,\varepsilon)=(\acgX j,\acgU j,\eta_{j})$ and compute
$(\icgY r,v_{r})=RP(\acgX j,\acgU j)$; if the condition 
\[
\Delta_{\mu}(\icgY r;\icgY{},v)\leq\varepsilon,
\]
holds then \textbf{stop} with a \emph{success }status and \textbf{output}
the triple $(\icgY{},v,\varepsilon)$; otherwise, \textbf{stop} with
a \emph{failure} status.
\end{itemize}
\noindent \rule[0.5ex]{1\columnwidth}{1pt}

It is well-known (see, for example, \cite[Proposition 2.3]{he2016accelerated}) that the scalar $B_{j}$ updated in step~1 satisfies
\begin{equation}
B_{j}\geq\frac{1}{M}\max\left\{ \frac{j^{2}}{4},\left(1+\sqrt{\frac{\mu}{4M}}\right)^{2(j-1)}\right\} \quad\forall j\geq1.\label{eq:B_j_bd}
\end{equation}
The next result presents the key properties about the R-ACG algorithm. 
\begin{prop}
\label{prop:acg_properties}The R-ACG algorithm has the following
properties: 
\begin{itemize}
\item[(a)] it stops with either failure or success in 
\begin{equation}
{\cal O}\left(\left[1+\sqrt{\frac{L}{\mu}}\right]\log_{1}^{+}\left[LK_{\theta}(1+\mu K_{\theta})\right]\right)\label{eq:r_acg_total_compl}
\end{equation}
iterations, where $K_{\theta}:=1+\sqrt{2}/\theta$; 
\item[(b)] if it stops with success, then its output $(\icgY{},v,\varepsilon)$
solves Problem~${\cal B}$; 
\item[(c)] if $\psi_{s}$ is $\mu$--strongly convex then it always stops with
success and its output $(\icgY{},v,\varepsilon)$ solves Problem~${\cal A}$. 
\end{itemize}
\end{prop}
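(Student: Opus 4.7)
The plan is to address the three parts in the order (b), (a), (c), each building on the ACG invariants encoded by the step~1--2 recursion and on the refinement machinery of Proposition~\ref{prop:gen_refinement}. Part~(b) is essentially bookkeeping: whenever the algorithm reaches the success branch of step~5, the triple $(\icgY{},v,\varepsilon)=(\acgX j,\acgU j,\eta_{j})$ has already passed both the test \eqref{eq:acg_ineq}, which coincides with the inequality in \eqref{eq:cvx_inexact}, and the refinement test $\Delta_{\mu}(\icgY r;\icgY{},v)\leq\varepsilon$, which coincides with \eqref{eq:prb_B_Delta_ineq}. These are precisely the two defining requirements of Problem~${\cal B}$, so (b) follows immediately.

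For part~(a), I would argue that as long as \eqref{eq:acg_ineq_invar} continues to hold, expanding its quadratic left side and applying the triangle inequality yields the estimates
\[
\|\acgU j\|^{2} \leq \frac{C_{1}(1+\mu B_{j})}{B_{j}^{2}}\,\|\acgX j-\acgX 0\|^{2}, \qquad 2\eta_{j} \leq \frac{1}{B_{j}}\,\|\acgX j-\acgX 0\|^{2},
\]
for some absolute constant $C_{1}$. Plugging the lower bound \eqref{eq:B_j_bd} on $B_{j}$ into these estimates and demanding that their sum fall below $\theta^{2}\|\acgX j-\acgX 0\|^{2}$ produces the iteration count in \eqref{eq:r_acg_total_compl}, after handling the regimes $\mu\approx 0$ (quadratic growth of $B_{j}$) and $\mu>0$ (geometric growth) separately and absorbing the $\theta$-dependence into $K_{\theta}=1+\sqrt{2}/\theta$ through the $\log_{1}^{+}$ factor. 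If, on the other hand, \eqref{eq:acg_ineq_invar} fails before this horizon, the algorithm halts at step~3 sooner still, so the total iteration count is bounded as claimed either way.

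Part~(c) requires two additional facts under the $\mu$-strong convexity of $\psi_{s}$: the step~3 invariant \eqref{eq:acg_ineq_invar} must hold at every iteration (so that failure there is impossible), and the step~5 refinement test must then pass, with the resulting triple actually solving the stronger Problem~${\cal A}$. Both follow from the standard invariant that, under $\mu$-strong convexity, $\Gamma_{j}$ is a $\mu$-strongly convex lower envelope of $\psi$ whose minimizer is $\acgY j$; this is proved by a short induction on $j$ using the recursion in steps~1--2 (cf.\ \cite{he2016accelerated}). Combining that envelope inequality with the definitions of $\acgU j$ and $\eta_{j}$ yields $\acgU j \in \partial_{\eta_{j}}\bigl(\psi - (\mu/2)\|\cdot-\acgX j\|^{2}\bigr)(\acgX j)$, namely the inclusion in \eqref{eq:cvx_inexact}; Proposition~\ref{prop:gen_refinement}(d) then guarantees that the step~5 test passes, so the output solves Problem~${\cal A}$.

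The main obstacle is proving the envelope invariant $\Gamma_{j}\leq\psi$ together with \eqref{eq:acg_ineq_invar} cleanly in the strongly convex regime, since this single induction simultaneously powers the termination bound in (a), the Problem~${\cal B}$ certification in (b), and the upgrade to Problem~${\cal A}$ in (c). Once that invariant is in hand, the remaining work reduces to routine algebra on the recursions for $\xi_{j-1}, b_{j-1}, B_{j}$ and the companion quantities $\Gamma_{j}, \acgU j, \eta_{j}$.
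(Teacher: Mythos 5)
Your proposal is correct and follows essentially the same route as the paper: part (b) from the step~4/5 checks, part (a) by showing that once $B_{j}$ (bounded below via \eqref{eq:B_j_bd}) is large enough the invariant \eqref{eq:acg_ineq_invar} forces \eqref{eq:acg_ineq} through exactly the two estimates you state (the paper's constant is $C_{1}=4$), and part (c) via the lower-envelope invariant $\Gamma_{j}\leq\psi$ yielding $\acgU j\in\pt_{\eta_{j}}(\psi-\tfrac{\mu}{2}\|\cdot-\acgX j\|^{2})(\acgX j)$ and then Proposition~\ref{prop:gen_refinement}(d), which is precisely Proposition~\ref{prop:acg_key_props}(c)--(d) in the appendix. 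No gaps.
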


\begin{proof}
(a) See Appendix~\ref{app:r_acg}.

(b) This follows from the successful checks in step~4 and 5 of the
algorithm.

(c) The fact that the algorithm never stops with failure follows from
Proposition~\ref{prop:acg_key_props}(c)--(d) in Appendix~\ref{app:r_acg}.
The fact that the algorithm stops with success follows from the previous
statement, the successful checks in step~4 and 5 of the algorithm,
and the fact that the algorithm stops in a finite number of iterations
in part (a). 
\end{proof}

\section{\label{sec:accelerated_icg}Inexact Composite Gradient Methods}

This section presents the ICG methods and the general problem that
they solve. It contains three subsections. The first one presents
the problem of interest and gives a general outline of the ICG methods,
the second one presents the IA-ICG method, and the third one presents
the DA-ICG method. For the ease of presentation, the proofs in this section are deferred
to Section~\ref{sec:icg_cvg_rate}.

\subsection{\label{subsec:prb_of_interest}Problem of Interest and Outline of
the Methods}

This subsection describes the problem that the ICG methods solve and outlines
their structure.

Instead of considering problems having
the spectral structure
mentioned in Section~\ref{sec:intro},
this section considers
a more general NCO problem where its variable $u$
lies in a finite dimensional inner product space ${\cal Z}$
(and, hence, can be either a vector and/or matrix) and presents 
both ICG methods in this
more general setting.
Section~\ref{sec:spectral_details} then presents a modification
of the ACG subroutine used by both ICG methods that
drastically improves their efficiency in the setting of 
the spectral problem \eqref{eq:intro_prb}.

More specifically, this section considers the problem
\begin{equation}
\min_{u\in{\cal Z}}\left[\phi(u):=f_{1}(u)+f_{2}(u)+h(u)\right]\label{eq:main_prb}
\end{equation}
where the functions $f_{1},f_{2},$ and $h$ are assumed to satisfy the
following assumptions: 
\begin{itemize}
\item[(A1)] $h\in\cConv{\cal Z}$; 
\item[(A2)] $f_{1},f_{2}$ are continuously differentiable functions and there
exists $(m_{1},M_{1})\in\r^{2}$ and $(m_{2},M_{2})\in\r^{2}$ such
that, for $i\in\{1,2\}$, we have 
\begin{gather}
-\frac{m_{i}}{2}\|u-\icgY{}\|^{2}\leq f_{i}(u)-\ell_{f_{i}}(u;\icgY{})\leq\frac{M_{i}}{2}\|u-\icgY{}\|^{2}\quad\forall u,\icgY{}\in\dom h;\label{eq:curv_fi}
\end{gather}
\item[(A3)] for $i\in\{1,2\}$, we have 
\[
\|\nabla f_{i}(u)-\nabla f_{i}(\icgY{})\|\leq L_{i}\|u-\icgY{}\|\quad\forall u,\icgY{}\in\dom h,
\]
where $L_{i}:=\max\{|m_{i}|,|M_{i}|\}$; 
\item[(A4)] $\phi_{*}:=\inf_{u\in{\cal Z}}\phi(u)>-\infty$. 
\end{itemize}
Note that assumption (A2) implies that assumption (A3) holds when
the interior of $\dom h$ is nonempty. Under the above assumptions,
the proposed ICG methods find an approximate solution $(\hat{\icgY{}},\hat{v})$
of \eqref{eq:main_prb} as in \eqref{eq:gen_rho_approx_solution}
with $\psi_{s}=f_{1}+f_{2}$ and $\psi_{n}=h$, i.e. 
\begin{equation}
\hat{v}\in\nabla f_{1}(\hat{\icgY{}})+\nabla f_{2}(\hat{\icgY{}})+\pt h(\hat{\icgY{}}),\quad\|\hat{v}\|\leq\hat{\rho}.\label{eq:rho_approx_soln}
\end{equation}

We now outline the ICG methods. Given a starting point $\icgY 0\in\dom\psi_{n}$
and a special stepsize $\lam>0$, each method continually calls the
R-ACG algorithm of Section~\ref{sec:background} to find an approximate
solution of a prox-linear form of \eqref{eq:main_prb}. More specifically,
each R-ACG call is used to tentatively find an approximate solution
of 
\begin{equation}
\min_{u\in{\cal Z}}\left[\psi(u)=\lam\left[\ell_{f_{1}}(u;\acgX 0)+f_{2}(u)+h(u)\right]+\frac{1}{2}\|u-\acgX 0\|^{2}\right],\label{eq:icg_subprb}
\end{equation}
for some reference point $\acgX 0$. For the IA-ICG method, the point
$\acgX 0$ is $\icgY 0$ for the first R-ACG call and is the last
obtained approximate solution for the other R-ACG calls. For the DA-ICG
method, the point $\acgX 0$ is chosen in an accelerated manner.

From the output of the $k^{{\rm th}}$ R-ACG call, a refined pair
$(\hat{\icgY{}},\hat{v})=(\hat{\icgY{}}_{k},\hat{v}_{k})$ is generated
which: (i) always satisfies the inclusion of \eqref{eq:rho_approx_soln};
and (ii) is such that $\min_{i\leq k}\|\hat{v}_{i}\|\to0$ as $k\to\infty$.
More specifically, this refined pair is generated by applying the
refinement procedure of Section~\ref{sec:background} and adding
some adjustments to the resulting output to conform with our goal
of finding an approximate solution as in \eqref{eq:rho_approx_soln}.
For the ease of future reference, we now state this specialized refinement
procedure. Before proceeding, we introduce the shorthand notation
\begin{equation}
M_{i}^{+}:=\max\left\{ M_{i},0\right\} ,\quad m_{i}^{+}:=\max\left\{ m_{i},0\right\} ,\quad L_{i}(x,y):=L_{f_{i}}(x,y),\label{eq:mMi_def}
\end{equation}
for $i\in\{1,2\}$, to keep its presentation (and future results)
concise.

\noindent %
\noindent\begin{minipage}[t]{1\columnwidth}%
\noindent \rule[0.5ex]{1\columnwidth}{1pt}

\noindent \textbf{Specialized Refinement Procedure}

\noindent \rule[0.5ex]{1\columnwidth}{1pt}%
\end{minipage}

\noindent \textbf{Input}: a quadruple $(M_{2},f_{1},f_{2},h)$ satisfying
(A1)--(A2), a scalar $\lam>0$, and a triple $(\icgY{},v,\acgX 0)\in\dom\psi_{n}\times{\cal Z}\times{\cal Z}$;

\noindent \textbf{Output}: a pair $(\hat{\icgY{}},\hat{v})$ satisfying
the inclusion of \eqref{eq:rho_approx_soln}; 
\begin{itemize}
\item[1.] compute $(\hat{\icgY{}},v_{r})=RP(\icgY{},v)$ using the refinement
procedure in Section~\ref{sec:background} with 
\begin{equation}
M=\lam M_{2}^{+}+1,\quad\psi_{s}=\lam\left[\ell_{f_{1}}(\cdot;\acgX 0)+f_{2}\right]+\frac{1}{2}\|\cdot-\acgX 0\|^{2},\quad\psi_{n}=\lam h;\label{eq:srp_subinputs}
\end{equation}
\item[2.] compute the residual 
\[
\hat{v}=\frac{1}{\lam}(v_{r}+\acgX 0-\icgY{})+\nabla f_{1}(\hat{\icgY{}})-\nabla f_{1}(\acgX 0),
\]
and output $(\hat{\icgY{}},\hat{v})$. 
\end{itemize}
\noindent \rule[0.5ex]{1\columnwidth}{1pt}

The result below states some properties about the above procedure.
For the sake of brevity, we write $(\hat{\icgY{}},\hat{v})=SRP(\icgY{},v,\acgX 0)$
to indicate that the pair $(\hat{\icgY{}},\hat{v})$ is the output
of the above procedure with inputs $(M_{2},f_{1},f_{2},h)$, $\lam$,
and $(\icgY{},v,\acgX 0)$.
\begin{lem}
\label{lem:spec_refine} Let $(m_{1},M_{1})$, $(m_{2},M_{2})$, and
$(f_{1},f_{2},h)$ satisfying assumptions (A1)--(A3) and a quadruple
$(\acgX 0,\icgY{},v,\varepsilon)\in{\cal Z}\times\dom\psi_{n}\times{\cal Z}\times\r_{+}$
be given. Moreover, let $(\hat{\icgY{}},\hat{v})=SRP(\icgY{},v,\acgX 0)$
and define 
\begin{gather}
C_{\lam}(x,y):=\frac{1+\lam\left[M_{2}^{+}+L_{1}(x,y)+L_{2}(x,y)\right]}{\sqrt{1+\lam M_{2}^{+}}},\label{eq:C_lam_fn_def}
\end{gather}
for every $x,y\in{\cal Z}$. Then, the following statements hold:
\begin{itemize}
\item[(a)] $\hat{v}\in\nabla f_{1}(\hat{\icgY{}})+\nabla f_{2}(\hat{\icgY{}})+\pt h(\hat{\icgY{}})$; 
\item[(b)] if $(\icgY{},v,\varepsilon)$ solves Problem~${\cal B}$ with $(\mu,\psi_{s},\psi_{n})$
as in \eqref{eq:gen_acg_inputs}, then 
\[
\|\hat{v}\|\leq\left[L_{1}(\icgY{},w)+\frac{2+\theta C_{\lam}(\icgY{},\hat{\icgY{}})}{\lam}\right]\|\icgY{}-\acgX 0\|.
\]
\end{itemize}
\end{lem}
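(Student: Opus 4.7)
The plan is to reduce both statements to Proposition~\ref{prop:gen_refinement} applied to the triple $(M,\psi_{s},\psi_{n})$ specified in \eqref{eq:srp_subinputs}.

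For part (a), I would invoke Proposition~\ref{prop:gen_refinement}(a) to obtain $v_{r}\in\nabla\psi_{s}(\hat{\icgY{}})+\pt\psi_{n}(\hat{\icgY{}})$. Differentiating the explicit expression for $\psi_{s}$ in \eqref{eq:srp_subinputs} yields
\begin{equation*}
\nabla\psi_{s}(\hat{\icgY{}})=\lam\nabla f_{1}(\acgX 0)+\lam\nabla f_{2}(\hat{\icgY{}})+(\hat{\icgY{}}-\acgX 0),
\end{equation*}
and $\pt\psi_{n}(\hat{\icgY{}})=\lam\pt h(\hat{\icgY{}})$. Substituting these identities into the inclusion, dividing by $\lam$, and adding $\nabla f_{1}(\hat{\icgY{}})-\nabla f_{1}(\acgX 0)$ to both sides (which is exactly the algebraic adjustment built into step~2 of the procedure) isolates $\hat{v}$ on one side and the desired sum $\nabla f_{1}(\hat{\icgY{}})+\nabla f_{2}(\hat{\icgY{}})+\pt h(\hat{\icgY{}})$ on the other.

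For part (b), the driving tool is Proposition~\ref{prop:gen_refinement}(c). The hypothesis that $(\icgY{},v,\varepsilon)$ solves Problem~$\mathcal{B}$ supplies exactly the conditions (the inequality in \eqref{eq:cvx_inexact} together with \eqref{eq:prb_B_Delta_ineq}) required by that proposition, giving
\begin{equation*}
\|v_{r}\|\leq\theta\left[1+\frac{M+L_{\psi_{s}}(\icgY{},\hat{\icgY{}})}{\sqrt{M}}\right]\|\icgY{}-\acgX 0\|,\qquad \|\icgY{}-\hat{\icgY{}}\|\leq\frac{\theta}{\sqrt{M}}\|\icgY{}-\acgX 0\|,
\end{equation*}
where the second inequality is the intermediate bound \eqref{eq:z_zr_bd} from the proof of Proposition~\ref{prop:gen_refinement}(c). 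Using the explicit formula for $\nabla\psi_{s}$ derived in part~(a), the triangle inequality gives $L_{\psi_{s}}(x,y)\leq 1+\lam L_{2}(x,y)$. Combined with $M=\lam M_{2}^{+}+1$, this expresses the $\|v_{r}\|$ bound in terms that match the pieces of $C_{\lam}(\icgY{},\hat{\icgY{}})$ except for the $L_{1}$ contribution.

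It remains to bound $\|\hat{v}\|$ using its definition in step~2 of the procedure. I would apply the triangle inequality to write
\begin{equation*}
\|\hat{v}\|\leq\frac{\|v_{r}\|+\|\acgX 0-\icgY{}\|+\|\icgY{}-\hat{\icgY{}}\|}{\lam}+\|\nabla f_{1}(\hat{\icgY{}})-\nabla f_{1}(\acgX 0)\|,
\end{equation*}
and split the last term through the intermediate point $\icgY{}$ to produce the leading contribution $L_{1}(\icgY{},\acgX 0)\|\icgY{}-\acgX 0\|$ (matching the $L_{1}(\icgY{},w)$ factor in the target bound) plus an $L_{1}(\icgY{},\hat{\icgY{}})\|\icgY{}-\hat{\icgY{}}\|$ remainder. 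The previously derived bound on $\|\icgY{}-\hat{\icgY{}}\|$ converts this remainder into a term proportional to $\lam L_{1}/\sqrt{1+\lam M_{2}^{+}}$ times $\|\icgY{}-\acgX 0\|$, which is exactly what is needed to upgrade the partial bound on $\|v_{r}\|/\lam$ into the full composite factor $\theta C_{\lam}(\icgY{},\hat{\icgY{}})/\lam$. The $2/\lam$ additive piece then absorbs the $\|\acgX 0-\icgY{}\|/\lam$ and $\|\icgY{}-\hat{\icgY{}}\|/\lam$ contributions.

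The main obstacle is the bookkeeping: the Lipschitz-type contributions from $f_{1}$, $f_{2}$, and the quadratic prox term in $\psi_{s}$ must be bundled precisely into the single composite constant $C_{\lam}(\icgY{},\hat{\icgY{}})$ defined in \eqref{eq:C_lam_fn_def}. The crucial discipline is to consistently split each $\nabla f_{i}$ difference through $\icgY{}$ and each point difference through $\icgY{}$, so that all large-norm contributions end up multiplying $\|\icgY{}-\acgX 0\|$ with coefficients matching the structure of $C_{\lam}$.
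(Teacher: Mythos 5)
Your proposal follows the paper's proof essentially verbatim: part (a) is Proposition~\ref{prop:gen_refinement}(a) plus the algebraic unwinding of the inputs in \eqref{eq:srp_subinputs}, and part (b) combines Proposition~\ref{prop:gen_refinement}(c) (for $\|v_r\|$), the intermediate bound \eqref{eq:z_zr_bd} (for $\|\icgY{}-\hat{\icgY{}}\|$), and the splitting of $\nabla f_1(\hat{\icgY{}})-\nabla f_1(\acgX 0)$ through $\icgY{}$, exactly as the paper does. The only discrepancy is bookkeeping: your triangle inequality for $\hat v$ carries a spurious $\|\icgY{}-\hat{\icgY{}}\|/\lam$ term that is not present in the definition of $\hat v$ in step~2 of the procedure, and together with the $+1$ in $L_{\psi_s}\leq 1+\lam L_2$ this makes the additive constant land slightly above $2/\lam$ for $\theta$ near $1$ unless that term is dropped --- the paper's accounting (two terms in the $1/\lam$ part, identifying $L$ with $\lam L_2$) is what yields exactly $(1+\theta)/\lam\leq 2/\lam$.
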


It is worth recalling from Section~\ref{sec:intro} that in the applications
we consider, the cost of the R-ACG call is small compared to SVD computation
that is performed before solving each subproblem as in \eqref{eq:icg_subprb}.
Hence, in the analysis that follows, we present complexity results
related to the number of subproblems solved rather than the total
number of R-ACG iterations. We do note, however, that the number of
R-ACG iterations per subproblem is finite in view of Proposition~\ref{prop:acg_properties}(a).

\subsection{Static and Dynamic IA-ICG Methods}

This subsection presents the static and dynamic IA-ICG methods.

We first state the static IA-ICG method.

\noindent %
\noindent\begin{minipage}[t]{1\columnwidth}%
\noindent \rule[0.5ex]{1\columnwidth}{1pt}

\noindent \textbf{Static IA-ICG Method}

\noindent \rule[0.5ex]{1\columnwidth}{1pt}%
\end{minipage}

\noindent \textbf{Input}: function triple $(f_{1},f_{2},h)$ and scalar
quadruple $(m_{1},M_{1},m_{2},M_{2})\in\r^{4}$ satisfying (A1)--(A4),
tolerance $\hat{\rho}>0$, initial point $\icgY 0\in\dom h$, and
scalar pair $(\lam,\theta)\in\r_{++}\times(0,1)$ satisfying 
\begin{equation}
\lam M_{1}+\theta^{2}\leq\frac{1}{2};\label{eq:aicg_lam_restr}
\end{equation}

\noindent \textbf{Output}: a pair $(\hat{\icgY{}},\hat{v})$ satisfying
\eqref{eq:rho_approx_soln} or a \textit{failure} status; 
\begin{itemize}
\item[0.] let $\Delta_{1}(\cdot;\cdot,\cdot)$ be as in \eqref{eq:Delta_def}
with $\mu=1$, and set $k=1$;
\item[1.] use the R-ACG algorithm to tentatively solve Problem~${\cal B}$
associated with \eqref{eq:icg_subprb}, i.e., with inputs $(\mu,M,\psi_{s},\psi_{n})$
and $(\theta,\acgX 0)$ where the former is given by 
\begin{equation}
\begin{gathered}\mu=1,\quad M=\lam M_{2}^{+}+1,\\
\psi_{s}=\lam\left[\ell_{f_{1}}(\cdot;\acgX 0)+f_{2}\right]+\frac{1}{2}\|\cdot-\acgX 0\|^{2},\quad\psi_{n}=\lam h,
\end{gathered}
\label{eq:gen_acg_inputs}
\end{equation}
and $\acgX 0=\icgY{k-1}$; if the R-ACG stops with \emph{failure},
then \textbf{stop} with a \textit{failure} status; otherwise, let
$(\icgY k,v_{k},\varepsilon_{k})$ denote its output and go to step~2; 
\item[2.] if the inequality $\Delta_{1}(\icgY{k-1};\icgY k,v_{k})\leq\varepsilon_{k}$
holds, then go to step~3; otherwise, \textbf{stop} with a \textit{failure}
status; 
\item[3.] set $(\hat{\icgY{}}_{k},\hat{v}_{k})=SRP(\icgY k,v_{k},\icgY{k-1})$;
if $\|\hat{v}_{k}\|\leq\hat{\rho}$ then \textbf{stop} with a \textit{success}
status and \textbf{output} $(\hat{\icgY{}},\hat{v})=(\hat{\icgY{}}_{k},\hat{v}_{k})$;
otherwise, update $k\gets k+1$ and go to step~1. 
\end{itemize}
\noindent \rule[0.5ex]{1\columnwidth}{1pt}

Note that the static IA-ICG method may fail without obtaining a pair
satisfying \eqref{eq:rho_approx_soln}. In Theorem~\ref{thm:aicg_compl}(c)
below, we state that a sufficient condition for the method to stop
successfully is that $f_{2}$ be convex. This property will be important
when we present the dynamic IA-ICG method, which: (i) repeatedly
calls the static method; and (ii) incrementally transfers convexity
from $f_{1}$ to $f_{2}$ between each call until a successful termination
is achieved.

We now make some additional remarks about the above method. First,
it performs two kinds of iterations, namely, ones that are indexed
by $k$ and ones that are performed by the R-ACG algorithm. We refer
to the former kind as outer iterations and the latter kind as inner
iterations. Second, in view of \eqref{eq:aicg_lam_restr}, if $M_{1}>0$
then $0<\lam<(1-2\theta^{2})/(2M_{1})$ whereas if $M_{1}\leq0$ then
$0<\lam<\infty$.  Finally, the most expensive part of the method is the R-ACG call in step~1. 
In Section~\ref{sec:spectral_details}, we show that this call
can be replaced with a call to a spectral version of R-ACG that
is dramatically more efficient when the underlying problem 
has the spectral structure as in \eqref{eq:intro_prb}.

The next result summarizes some facts about the static IA-ICG method.
Before proceeding, we first define some useful quantities. For $\lam>0$
and $u,w\in{\cal Z}$, define 
\begin{gather}
\widetilde{\ell}_{\phi}(u;w):=\ell_{f_{1}}(u;w)+f_{2}(u)+h(u),\quad\overline{C}_{\lam}:=\frac{1+\lam(M_{2}^{+}+L_{1}+L_{2})}{\sqrt{1+\lam M_{2}^{+}}}.\label{eq:ell_phi_C_bar_lam_def}
\end{gather}

\begin{thm}
\label{thm:aicg_compl}The following statements hold about the static
IA-ICG method: 
\begin{itemize}
\item[(a)] it stops in 
\begin{equation}
{\cal O}_{1}\left(\left[\sqrt{\lam}L_{1}+\frac{1+\theta\overline{C}_{\lam}}{\sqrt{\lam}}\right]^{2}\left[\frac{\phi(z_{0})-\phi_{*}}{\hat{\rho}^{2}}\right]\right)\label{eq:aicg_outer_compl}
\end{equation}
outer iterations, where $\phi_{*}$ is as in (A4); 
\item[(b)] if it stops with success, then its output pair $(\hat{\icgY{}},\hat{v})$
is a $\hat{\rho}$--approximate solution of \eqref{eq:main_prb}; 
\item[(c)] if $f_{2}$ is convex, then it always stops with success. 
\end{itemize}
\end{thm}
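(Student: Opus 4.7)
The plan is to handle the three parts in the order (b), (c), (a), since (b) is immediate, (c) reduces to invoking earlier results together with (a), and (a) carries the technical content.

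For part (b), I would observe that the only way the method returns a pair $(\hat z,\hat v)$ is via the success branch in step~3, at which point $\|\hat v_k\|\le\hat\rho$ by the step~3 check, and $\hat v_k\in \nabla f_1(\hat{\icgY{}}_k)+\nabla f_2(\hat{\icgY{}}_k)+\partial h(\hat{\icgY{}}_k)$ by Lemma~\ref{lem:spec_refine}(a). These two conditions are exactly \eqref{eq:rho_approx_soln}.

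For part (a), the key is a per-iteration descent estimate for $\phi$ along $\{z_k\}$. Note that, whenever the method passes through step~2 at iteration $k$, two facts hold simultaneously: (i) the step~2 check gives $\Delta_1(z_{k-1};z_k,v_k)\le\varepsilon_k$, which by the definition \eqref{eq:Delta_def} (with $\psi$ equal to the prox subproblem objective in \eqref{eq:gen_acg_inputs}) yields
\[
\lambda[\ell_{f_1}(z_k;z_{k-1})+f_2(z_k)+h(z_k)]+\|z_k-z_{k-1}\|^{2}\le \lambda\phi(z_{k-1})+\langle v_k, z_k-z_{k-1}\rangle+\varepsilon_k,
\]
and (ii) the Problem~${\cal B}$ inequality gives $\|v_k\|^{2}+2\varepsilon_k\le\theta^{2}\|z_k-z_{k-1}\|^{2}$. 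Combining these with the upper curvature bound $f_1(z_k)\le \ell_{f_1}(z_k;z_{k-1})+M_1\|z_k-z_{k-1}\|^{2}/2$ from \eqref{eq:curv_fi} and Young's inequality $\langle v_k,z_k-z_{k-1}\rangle\le \|v_k\|^{2}/2+\|z_k-z_{k-1}\|^{2}/2$ yields, after simplification,
\[
\lambda\phi(z_k)+\tfrac{1}{2}(1-\lambda M_1-\theta^{2})\|z_k-z_{k-1}\|^{2}\le \lambda\phi(z_{k-1}).
\]
The stepsize restriction \eqref{eq:aicg_lam_restr} makes the coefficient of $\|z_k-z_{k-1}\|^{2}$ at least $1/4$, so telescoping gives $\sum_{k=1}^{N}\|z_k-z_{k-1}\|^{2}\le 4\lambda[\phi(z_0)-\phi_*]$.

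To convert this into a bound on residuals, I apply Lemma~\ref{lem:spec_refine}(b) at each $k$ with $\acgX 0=z_{k-1}$, obtaining $\|\hat v_k\|\le [L_1+(2+\theta\overline{C}_\lambda)/\lambda]\|z_k-z_{k-1}\|$ after bounding the local Lipschitz constants by their global counterparts. A pigeonhole argument on $\min_{k\le N}\|\hat v_k\|^{2}$ together with the telescoped bound then produces
\[
\min_{k\le N}\|\hat v_k\|^{2}\le \frac{4\lambda [L_1+(2+\theta\overline{C}_\lambda)/\lambda]^{2}[\phi(z_0)-\phi_*]}{N},
\]
which, solved for $N$, gives exactly the complexity \eqref{eq:aicg_outer_compl} once the bracketed quantity is rewritten as $[\sqrt{\lambda}L_1+(1+\theta\overline{C}_\lambda)/\sqrt{\lambda}]^{2}$ (up to absorbing constants into the ${\cal O}_1$). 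I expect the main obstacle to be precisely the manipulation that produces the clean descent coefficient $(1-\lambda M_1-\theta^{2})/2$, since it requires choosing the Young's-inequality split so that the $\|v_k\|^{2}$ term can be absorbed via the Problem~${\cal B}$ bound.

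For part (c), when $f_2$ is convex, the function $\psi_s$ in \eqref{eq:gen_acg_inputs} is $1$-strongly convex (the $\tfrac{1}{2}\|\cdot-\acgX 0\|^{2}$ term supplies the modulus while $\ell_{f_1}(\cdot;\acgX 0)+\lambda f_2$ is convex), so $\mu=1$ matches the strong convexity parameter and Proposition~\ref{prop:acg_properties}(c) rules out failure in step~1 and guarantees that its output solves Problem~${\cal A}$. Proposition~\ref{prop:gen_refinement}(d) then gives $\Delta_1(u;z_k,v_k)\le\varepsilon_k$ for \emph{every} $u\in\dom\psi_n$, in particular for $u=z_{k-1}$, ruling out failure in step~2. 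Hence the method can only terminate through the success branch of step~3, and by part (a) this occurs within the stated number of outer iterations.
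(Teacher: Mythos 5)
Your proposal is correct and follows essentially the same route as the paper: part (b) is the step~3 check plus Lemma~\ref{lem:spec_refine}(a); part (a) is the paper's Lemma~\ref{lem:aicg_resid_rate} descent inequality (derived exactly as you do, from the step~2 check, the Problem~${\cal B}$ inequality, the curvature bound, and Young's inequality) telescoped and combined with Lemma~\ref{lem:spec_refine}(b), the only cosmetic difference being that the paper routes the final step through Proposition~\ref{prop:aicg_v_hat_rate_alt} with averaged local constants and Lemma~\ref{lem:p_norm_tech} before specializing to the global constants $L_{1}$ and $\overline{C}_{\lam}$, whereas you bound by the global constants upfront and use a plain pigeonhole; and part (c) is Proposition~\ref{prop:acg_properties}(c) together with Proposition~\ref{prop:gen_refinement}(d) and finiteness from part (a), just as in the paper.
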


We now make three remarks about the above results. First, if $\theta={\cal O}(1/\overline{C}_{\lam})$
then \eqref{eq:aicg_outer_compl} is on the order of
\begin{equation}
{\cal O}_{1}\left(\left[\sqrt{\lam}L_{1}+\frac{1}{\sqrt{\lam}}\right]^{2}\left[\frac{\phi(z_{0})-\phi_{*}}{\hat{\rho}^{2}}\right]\right).\label{eq:compl_aicg}
\end{equation}
Moreover, comparing the above complexity to the iteration complexity
of the ECG method described in Section~\ref{sec:intro}, which is
known (see, for example, \cite{nesterov2012gradient})
to obtain an approximate solution of \eqref{eq:main_prb} in 
\begin{equation}
{\cal O}_{1}\left(\left[\sqrt{\lam}(L_{1}+L_{2})+\frac{1}{\sqrt{\lam}}\right]^{2}\left[\frac{\phi(z_{0})-\phi_{*}}{\hat{\rho}^{2}}\right]\right)\label{eq:compl_ecg}
\end{equation}
iterations, we see that \eqref{eq:compl_aicg} is smaller than \eqref{eq:compl_ecg}
in magnitude when $L_{2}$ is large. Notice also that the 
complexity in \eqref{eq:compl_aicg} corresponds to applying 
the ECG method to \eqref{eq:intro_prb} where the composite function is
$f_2+h$ instead of just $h$. Second, Theorem~\ref{thm:aicg_compl}(b)
shows that if the method stops with success, regardless of the convexity
of $f_{2}$, then its output pair $(\hat{\icgY{}},\hat{v})$ is always
an approximate solution of \eqref{eq:main_prb}. Third, in view of
Proposition~\ref{prop:aicg_v_hat_rate_alt}, the quantities $L_{1}$
and $\overline{C}_{\lam}$ in all of the previous complexity results
can be replaced by their averaged counterparts in \eqref{eq:avg_def}.
As these averaged quantities only depend on $\{(\icgY i,\hat{\icgY{}}_{i})\}_{i=1}^{k}$,
we can infer that the static IA-ICG method adapts to the local geometry
of its input functions.

We now state the dynamic IA-ICG method that resolves the
issue of failure in the static IA-ICG method.

\noindent %
\noindent\begin{minipage}[t]{1\columnwidth}%
\noindent \rule[0.5ex]{1\columnwidth}{1pt}

\noindent \textbf{Dynamic IA-ICG Method}

\noindent \rule[0.5ex]{1\columnwidth}{1pt}%
\end{minipage}

\noindent \textbf{Input}: the same as the static IA-ICG method but
with an additional parameter $\xi_{0}>0$;

\noindent \textbf{Output}: a pair $(\hat{\icgY{}},\hat{v})$ satisfying
\eqref{eq:rho_approx_soln}; 
\begin{itemize}
\item[0.] set $\xi=\xi_{0}$, $\ell=1$, and 
\begin{gather}
\begin{gathered}f_{1}=f_{1}-\frac{\xi}{2}\|\cdot\|^{2},\quad f_{2}=f_{2}+\frac{\xi}{2}\|\cdot\|^{2},\\
m_{1}=m_{1}+\xi,\quad M_{1}=M_{1}-\xi,\quad m_{2}=m_{2}-\xi,\quad M_{2}=M_{2}+\xi;
\end{gathered}
\label{eq:perturbed_quants}
\end{gather}
\item[1.]  call the static IA-ICG method with inputs $(f_{1},f_{2},h)$, $(m_{1},M_{1},m_{2},M_{2})$,
$\hat{\rho}$, $\icgY 0$, and $(\lam,\theta)$; 
\item[2.] if the static IA-ICG call stops with a \textit{failure} status, then
set $\xi=2\xi$, update the quantities in \eqref{eq:perturbed_quants}
with the new value of $\xi$, increment $\ell=\ell+1$, and go to
step~1; otherwise, let $(\hat{\icgY{}},\hat{v})$ be the output pair
returned by the static IA-ICG call, \textbf{stop}, and \textbf{output}
this pair. 
\end{itemize}
\rule[0.5ex]{1\columnwidth}{1pt}

Some remarks about the above method are in order. First, in view of
\eqref{eq:aicg_lam_restr} and the fact that $M_{1}$ is monotonically
decreasing, the parameter $\lam$ does not need to be changed for
each IA-ICG call. Second, in view of assumption (A2) and Theorem~\ref{thm:aicg_compl}(c),
the IA-ICG call in step~1 always terminates with success whenever
$m_{2}\leq0$. As a consequence, the total number of IA-ICG calls
is at most $\lceil \log(2m_{2}^{+}/\xi_{0}) \rceil $. Third,
in view of the second remark and Theorem~\ref{thm:aicg_compl}(b),
the method always obtains a $\hat{\rho}$--approximate solution
of \eqref{eq:main_prb} in a finite number of IA-ICG outer iterations.
Finally, in view of second remark again, the total number of IA-ICG
outer iterations is as in Theorem~\ref{thm:aicg_compl}(a) but with:
(i) an additional multiplicative factor of $\lceil \log(2m_{2}^{+}/\xi_{0})\rceil $;
and (ii) the constants $m_{1}$ and $M_{2}$ replaced with $(m_{1}+2m_{2}^{+})$
and $(M_{2}+2m_{2}^{+})$, respectively. It is worth mentioning that
a more refined analysis, such as the one in \cite{WJRVarLam2018},
can be applied in order to remove the factor of $\lceil \log(2m_{2}^{+}/\xi_{0})\rceil $
from the previously mentioned complexity.

\subsection{Static and Dynamic DA-ICG Methods}


This subsection presents the static DA-ICG method, but omits the statement of its 
dynamic variant for the sake of brevity. We do argue, however, that
the dynamic variant can be stated in the same way as the dynamic
IA-ICG method of Subsection~\ref{subsec:aicg} but with the call
to the static IA-ICG method replaced with a call to the static DA-ICG
method of this subsection.

We start by stating some additional assumptions. It is assumed that:
\begin{itemize}
\item[(i)] the set $\dom h$ is closed; 
\item[(ii)] there exists a bounded set $\Omega\supseteq\dom h$ for which a projection
oracle exists. 
\end{itemize}
We now state the static DA-ICG method.

\noindent %
\noindent\begin{minipage}[t]{1\columnwidth}%
\noindent \rule[0.5ex]{1\columnwidth}{1pt}

\noindent \textbf{Static DA-ICG Method}

\noindent \rule[0.5ex]{1\columnwidth}{1pt}%
\end{minipage}

\noindent \textbf{Input}: function triple $(f_{1},f_{2},h)$ and scalar
quadruple $(m_{1},M_{1},m_{2},M_{2})\in\r^{4}$ satisfying (A1)--(A4),
tolerance $\hat{\rho}>0$, initial point $\aicgYMin 0\in\dom h$,
and scalar pair $(\lam,\theta)\in\r_{++}\times(0,1)$ satisfying 
\begin{equation}
\lam M_{1}+\theta^{2}\leq\frac{1}{2};\label{eq:d_aicg_lam_restr}
\end{equation}

\noindent \textbf{Output}: a pair $(\hat{\icgY{}},\hat{v})$ satisfying
\eqref{eq:rho_approx_soln} or a \textit{failure} status; 
\begin{itemize}
\item[0.] let $\Delta_{1}(\cdot;\cdot,\cdot)$ be as in \eqref{eq:Delta_def}
with $\mu=1$, and set $A_{0}=0$, $\aicgX 0=\aicgYMin 0$, and $k=1$; 
\item[1.] compute the quantities 
\begin{equation}
\begin{aligned}a_{k-1} & =\frac{1+\sqrt{1+4A_{k-1}}}{2},\quad A_{k}=A_{k-1}+a_{k-1},\\
\aicgXTilde{k-1} & =\frac{A_{k-1}\aicgYMin{k-1}+a_{k-1}\aicgX{k-1}}{A_{k}};
\end{aligned}
\label{eq:accel_d_aicg_def}
\end{equation}
\item[2.] use the R-ACG algorithm to tentatively solve Problem~${\cal B}$
associated with \eqref{eq:icg_subprb}, i.e., with inputs $(\mu,M,\psi_{s},\psi_{n})$
and $(\theta,\acgX 0)$ where the former is as in \eqref{eq:gen_acg_inputs}
and $\acgX 0=\aicgXTilde{k-1}$; if the R-ACG stops with \emph{success},
then let $(\aicgY k,v_{k},\varepsilon_{k})$ denote its output and
go to step~3; otherwise, \textbf{stop} with a \textit{failure} status; 
\item[3.] if the inequality $\Delta_{1}(\icgY{k-1};\aicgY k,v_{k})\leq\varepsilon_{k}$
holds, then go to step~4; otherwise, \textbf{stop} with a \textit{failure}
status; 
\item[4.] set $(\hat{\icgY{}}_{k},\hat{v}_{k})=SRP(\aicgY k,v_{k},\aicgXTilde{k-1})$
where $SRP(\cdot,\cdot,\cdot)$ is described in Subsection~\ref{subsec:prb_of_interest};
if $\|\hat{v}_{k}\|\leq\hat{\rho}$ then \textbf{stop} with a \textit{success}
status and \textbf{output} $(\hat{\icgY{}},\hat{v})=(\hat{\icgY{}}_{k},\hat{v}_{k})$;
otherwise, compute 
\begin{align}
\begin{aligned}\aicgX k & =\argmin_{u\in\Omega}\frac{1}{2}\left\Vert u-[\aicgX{k-1}-a_{k-1}\left(v_{k}+\aicgXTilde{k-1}-\aicgY k\right)]\right\Vert ^{2},\\
\aicgYMin k & =\argmin_{u\in\left\{ \aicgYMin{k-1},\aicgY k\right\} }\left[f_{1}(u)+f_{2}(u)+h(u)\right],
\end{aligned}
\label{eq:z_c_def}
\end{align}
update $k\gets k+1$, and go to step~1. 
\end{itemize}
\noindent \rule[0.5ex]{1\columnwidth}{1pt}

Note that, similar to the static IA-ICG method, the static DA-ICG
method may fail without obtaining a pair satisfying \eqref{eq:rho_approx_soln}.
Proposition~\ref{thm:d_aicg_compl}(c) shows that a sufficient condition
for the method to stop successfully is that $f_{2}$ be convex. Using
arguments similar to the ones employed to derive the dynamic IA-ICG
method, a dynamic version of DA-ICG method can also be developed that
repeatedly invokes the static DA-ICG in place of the static IA-ICG. 

We now make some additional remarks about the above method. First,
it performs two kinds of iterations, namely, ones that are indexed
by $k$ and ones that are performed by the R-ACG algorithm. We refer
to the former kind as outer iterations and the latter kind as inner
iterations. Second, in view of the update for $\aicgYMin k$ in \eqref{eq:z_c_def},
the collection of function values $\{\phi(\aicgYMin i)\}_{i=0}^{k}$
is non-increasing. Third, in view of \eqref{eq:d_aicg_lam_restr},
if $M_{1}>0$ then $0<\lam<(1-2\theta^{2})/(2M_{1})$ whereas if $M_{1}\leq0$
then $0<\lam<\infty$.  Finally, the most expensive part of the method is the R-ACG call in step~2. 
In Section~\ref{sec:spectral_details}, we show that this call
can be replaced with a call to a spectral version of R-ACG that
is dramatically more efficient when the underlying problem 
has the spectral structure as in \eqref{eq:intro_prb}.

It is worth mentioning that the outer iteration scheme of the DA-ICG
method is a monotone and inexact generalization of the A-ECG method
in \cite{nonconv_lan16}. More specifically, this A-ECG method is a
version of the DA-ICG method where: (i) $\theta=0$; (ii) the R-ACG
algorithm in step~2 is replaced by an exact solver of \eqref{eq:icg_subprb};
(iii) the update of $\aicgX k$ in \eqref{eq:z_c_def} is replaced
by an update involving the prox evaluation of the function $a_{k-1}h$;
and (iv) both $f_{1}$ and $f_{2}$ are linearized instead of just
$f_{2}$ in the DA-ICG method. Hence, the DA-ICG method can be significantly
more efficient when its R-ACG call is more efficient than an exact
solver of \eqref{eq:icg_subprb} and/or when the projection onto $\Omega$
is more efficient than evaluating the prox of $a_{k-1}h$.

The next result summarizes some facts about the DA-ICG method. Before
proceeding, we introduce the useful constants 
\begin{gather}
\begin{gathered}D_{h}:=\sup_{u,z\in\dom h}\|u-z\|,\quad D_{\Omega}:=\sup_{u,z\in\Omega}\|u-z\|,\quad\Delta_{\phi}^{0}:=\phi(\aicgYMin 0)-\phi_{*},\\
d_{0}:=\inf_{u^{*}\in{\cal Z}}\{\|\aicgYMin 0-u^{*}\|:\phi(u^{*})=\phi_{*}\},\quad E_{\lam,\theta}:=\sqrt{\lam}L_{1}+\frac{1+\theta\overline{C}_{\lam}}{\sqrt{\lam}}.
\end{gathered}
\label{eq:d_aicg_diam}
\end{gather}

\begin{thm}
\label{thm:d_aicg_compl} The following statements hold about the
static DA-ICG method: 
\begin{itemize}
\item[(a)] it stops in 
\begin{equation}
{\cal O}_{1}\left(\frac{E_{\lam,\theta}^{2}[m_{1}^{+}D_{h}^{2}+\Delta_{\phi}^{0}]}{\hat{\rho}^{2}}+\frac{E_{\lam,\theta}[m_{1}^{+}+1/\lam]^{1/2}D_{\Omega}}{\hat{\rho}}\right)\label{eq:d_aicg_outer_compl}
\end{equation}
outer iterations; 
\item[(b)] if it stops with success, then its output pair $(\hat{\icgY{}},\hat{v})$
is a $\hat{\rho}$--approximate solution of \eqref{eq:main_prb}; 
\item[(c)] if $f_{2}$ is convex, then it always stops with success in 
\begin{equation}
{\cal O}_{1}\left(\frac{E_{\lam,\theta}^{2}m_{1}^{+}D_{h}^{2}}{\hat{\rho}^{2}}+\frac{E_{\lam,\theta}[m_{1}^{+}]^{1/2}D_{\Omega}}{\hat{\rho}}+\frac{E_{\lam,\theta}^{2/3}d_{0}^{2/3}\lam^{-1/3}}{\hat{\rho}^{2/3}}\right)\label{eq:d_aicg_cvx_outer_compl}
\end{equation}
outer iterations. 
\end{itemize}
\end{thm}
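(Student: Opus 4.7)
\emph{Part (b)} is essentially an immediate consequence of Lemma~\ref{lem:spec_refine}(a): whenever the static DA-ICG terminates with success in step~4, the pair $(\hat{\icgY{}},\hat v)=SRP(\aicgY k,v_k,\aicgXTilde{k-1})$ satisfies the inclusion of \eqref{eq:rho_approx_soln}, while the step~4 check supplies $\|\hat v\|\le\hat\rho$. Hence the substance of the theorem lies in parts (a) and (c). The unifying idea is to combine Lemma~\ref{lem:spec_refine}(b), which produces the refinement bound
\[
\|\hat v_k\|\;\lesssim\;\frac{E_{\lam,\theta}}{\lam}\,\|\aicgY k-\aicgXTilde{k-1}\|,
\]
with an outer accelerated Lyapunov analysis that controls $\|\aicgY k-\aicgXTilde{k-1}\|$ along the iterates.

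\emph{Part (a).} The first task is to check that R-ACG followed by the step~3 test delivers a valid $(\aicgY k,v_k,\varepsilon_k)$ solving Problem~$\mathcal{B}$: the inputs in \eqref{eq:gen_acg_inputs} give $\mu=1$ and $M=\lam M_2^++1$, and the condition $\Delta_1(\aicgYMin{k-1};\aicgY k,v_k)\le\varepsilon_k$ certifies, via Proposition~\ref{prop:acg_properties}(b), that the triple solves Problem~$\mathcal{B}$ whenever R-ACG does not fail. Next, I would derive an inexact Nesterov--Tseng estimate-sequence inequality of the form
\[
A_k\bigl[\phi(\aicgYMin k)-\phi(u)\bigr]+\tfrac12\|\aicgX k-u\|^2\le A_{k-1}\bigl[\phi(\aicgYMin{k-1})-\phi(u)\bigr]+\tfrac12\|\aicgX{k-1}-u\|^2+\Theta_k
\]
for every $u\in\Omega$, in which the monotone rule of \eqref{eq:z_c_def} ensures $\phi(\aicgYMin k)\le\phi(\aicgY k)$ and the residual $\Theta_k$ consists of an inexactness piece bounded by $\varepsilon_k\le\tfrac{\theta^2}{2}\|\aicgY k-\aicgXTilde{k-1}\|^2$ plus an $m_1^+\|\aicgY k-u\|^2$ weak-convexity penalty drawn from \eqref{eq:curv_fi}. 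Telescoping, bounding $\aicgYMin k,\aicgY k\in\dom h$ by $D_h$ and $\aicgX k\in\Omega$ by $D_\Omega$, and invoking $A_k\ge k^2/4$ (the outer analog of \eqref{eq:B_j_bd} with $\mu=0$), yields $\min_{i\le k}\|\aicgY i-\aicgXTilde{i-1}\|^2=\mathcal{O}\bigl((m_1^+D_h^2+\Delta_\phi^0)/k + D_\Omega^2/(\lam k^2)\bigr)$, which combined with the refinement bound above reproduces \eqref{eq:d_aicg_outer_compl}.

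\emph{Part (c).} When $f_2$ is convex, the inner $\psi_s$ in \eqref{eq:gen_acg_inputs} is $1$-strongly convex, so Proposition~\ref{prop:acg_properties}(c) guarantees R-ACG always halts with success and the Problem~$\mathcal{A}$ certificate automatically subsumes the step~3 check, eliminating failure. Consequently $\Theta_k$ in the Lyapunov inequality above simplifies, and a weighted telescoping produces $\sum_{i=1}^k A_i\|\aicgY i-\aicgXTilde{i-1}\|^2\lesssim A_k\, m_1^+D_h^2+d_0^2/\lam$; combining this with $\sum_{i\le k}A_i\gtrsim k^3$ then gives $\min_{i\le k}\|\aicgY i-\aicgXTilde{i-1}\|=\mathcal{O}(k^{-3/2})$ in the fully convex case $m_1^+=0$, producing the $\hat\rho^{-2/3}$ term via the refinement bound. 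The remaining $m_1^+$-dependent pieces of \eqref{eq:d_aicg_cvx_outer_compl} inherit the $\hat\rho^{-2}$ and $\hat\rho^{-1}$ structure from part (a).

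\emph{Main obstacle.} The most delicate step is the inexact accelerated Lyapunov descent, since R-ACG only certifies Problem~$\mathcal{B}$ --- i.e., a $\Delta_\mu$-type bound --- rather than an exact prox minimizer. Translating the guarantee $\Delta_1(\aicgYMin{k-1};\aicgY k,v_k)\le\varepsilon_k$ into a $\phi(\aicgY k)-\phi(u)$ inequality compatible with the estimate-sequence framework, while simultaneously isolating the nonconvexity constant $m_1^+$ and correctly apportioning the three distinct geometric constants $D_h$, $D_\Omega$, and $d_0$ from \eqref{eq:d_aicg_diam}, is where the bulk of the bookkeeping resides.
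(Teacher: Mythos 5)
Your proposal follows essentially the same route as the paper: part (b) via Lemma~\ref{lem:spec_refine}(a) and the step-4 test, and parts (a) and (c) by combining the refinement bound of Lemma~\ref{lem:spec_refine}(b) with an inexact accelerated Lyapunov/estimate-sequence inequality (the paper's Proposition~\ref{prop:descent_d_aicg}), telescoped against $D_h$ and $D_\Omega$ in the general case and against a fixed minimizer at distance $d_0$ in the convex case (Proposition~\ref{prop:sum_d_aicg_descent}), exactly as the paper does. The one point stated too loosely is that the Lyapunov inequality does not hold ``for every $u\in\Omega$'' but only for $u$ satisfying $\Delta_{1}(u;\aicgY i,v_{i})\leq\varepsilon_{i}$ --- which is precisely why the nonconvex case must telescope with the varying choice $u=\aicgYMin i$ (incurring the extra $kD_{\Omega}^{2}/\lam$ term) while the convex case may fix $u=\aicgYMin *$ via the Problem~${\cal A}$ certificate; your sketch implicitly respects this distinction, and the remaining discrepancies are only sketch-level bookkeeping of factors of $\sqrt{\lam}$ and the $km_{1}^{+}D_{\Omega}^{2}$ term.
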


We now make three remarks about the above results. First, in the ``best''
scenario of $\max\{m_{1},m_{2}\}\leq0$, i.e., $f_1$ and $f_2$ are convex, we have that \eqref{eq:d_aicg_cvx_outer_compl}
reduces to 
\[
{\cal O}_{1}\left(\left[L_{1}+\frac{1}{\lam}\right]^{2/3}\left[\frac{d_{0}^{2/3}}{\hat{\rho}^{2/3}}\right]\right),
\]
which has a smaller dependence on $\hat{\rho}$ when compared to \eqref{eq:compl_aicg}.
In the ``worst'' scenario of $\min\{m_{1},m_{2}\}>0$, if we take
$\theta={\cal O}(1/\overline{C}_{\lam})$, then \eqref{eq:d_aicg_outer_compl}
reduces to 
\[
{\cal O}_{1}\left(\left[\sqrt{\lam}L_{1}+\frac{1}{\sqrt{\lam}}\right]^{2}\left[\frac{m_{1}^{+}D_{h}^{2}+\phi(\aicgYMin 0)-\phi_{*}}{\hat{\rho}^{2}}\right]\right),
\]
which has the same dependence on $\hat{\rho}$ as in \eqref{eq:compl_aicg}.
Second, part (c) shows that if the method stops with an output pair
$(\hat{\icgY{}},\hat{v})$, regardless of the convexity of $f_{2}$,
then that pair is always an approximate solution of \eqref{eq:main_prb}.
Third, in view of Proposition~\ref{prop:gen_v_hat_rate_d_aicg},
the quantities $L_{1}$ and $\overline{C}_{\lam}$ in all of the previous
complexity results can be replaced by their averaged counterparts
in \eqref{eq:d_avg_def}. As these averaged quantities only depend
on $\{(\aicgY i,\hat{\icgY{}}_{i},\aicgXTilde{i-1})\}_{i=1}^{k}$,
we can infer that the static DA-ICG method, like the static IA-ICG
method of the previous subsection, also adapts to the local geometry
of its input functions.

\section{Exploiting the Spectral Decomposition}

\label{sec:spectral_details} Recall that at every outer iteration
of the ICG methods in Section~\ref{sec:accelerated_icg}, a call
to the R-ACG algorithm is made to tentatively solve Problem~${\cal B}$
(see Subsection~\ref{subsec:prb_of_interest}) associated with \eqref{eq:icg_subprb}.
Our goal in this section is to present a more efficient version of R-ACG (based on the idea outlined in Section~\ref{sec:intro})
 when the underlying problem has the spectral structure as in \eqref{eq:intro_prb}.

The content of this section is divided into two subsections. The first
one presents the aforementioned algorithm, whereas the second one
proves its key properties.

\subsection{\label{subsec:spectral_exploit}Spectral R-ACG Algorithm}

This subsection presents the R-ACG algorithm mentioned above. Throughout our presentation,
we let $\acgMatX 0$ represent the starting point given to the R-ACG
algorithm by the two ICG methods.

We first state the aforementioned efficient algorithm.

\noindent %
\noindent\begin{minipage}[t]{1\columnwidth}%
\noindent \rule[0.5ex]{1\columnwidth}{1pt}

\noindent \textbf{Spectral R-ACG Algorithm}

\noindent \rule[0.5ex]{1\columnwidth}{1pt}%
\end{minipage}

\noindent \textbf{Input}: a quadruple $(M_{2},f_{1},f_{2}^{{\cal V}},h^{{\cal V}})$
satisfying (A1)--(A3) with $(f_{2},h)=(f_{2}^{{\cal V}},h^{{\cal V}})$
and a triple $(\lam,\theta,\acgMatX 0)$;

\noindent \textbf{Output}: a triple $(\icgMatY{},V,\varepsilon)$
that solves Problem~${\cal B}$ associated with \eqref{eq:icg_subprb}
or a \textit{failure} status; 
\begin{itemize}
\item[1.] compute 
\begin{equation}
\begin{gathered}\acgMatX 0^{\lam}:=\acgMatX 0-\lam\nabla f_{1}(\acgMatX 0),\end{gathered}
\label{eq:SVD_quants}
\end{equation}
and a pair $(P,Q)\in{\cal U}^{m}\times{\cal U}^{n}$ satisfying $\acgMatX 0^{\lam}=P[\dg\sigma(\acgMatX 0^{\lam})]Q^{*}$; 
\item[2.] use the R-ACG algorithm to tentatively solve Problem~${\cal B}$
associated with \eqref{eq:icg_vec_prox}, i.e., with inputs $(\mu,M,\psi_{s}^{{\cal V}},\psi_{n}^{{\cal V}})$
and $(\theta,\acgX 0)$ where the former is given by 
\begin{equation}
\begin{gathered}\mu=1,\quad M:=\lam M_{2}^{+}+1,\\
\psi_{s}^{{\cal V}}:=\lam f_{2}^{{\cal V}}-\inner{\sigma(\acgMatX 0^{\lam})}{\cdot}+\frac{1}{2}\|\cdot\|^{2},\quad\psi_{n}^{{\cal V}}:=\lam h^{{\cal V}},
\end{gathered}
\label{eq:vec_acg_inputs}
\end{equation}
and $\acgX 0=\Dg(P^{*}\acgMatX 0Q)$; if the R-ACG stops with \emph{success},
then let $(\icgY{},v,\varepsilon)$ denote its output and go to step~3;
otherwise, \textbf{stop} with a \textit{failure} status; 
\item[3.] set $\icgMatY{}=P(\dg\icgY{})Q^{*}$ and $V=P(\dg v)Q^{*}$, and
output the triple $(\icgMatY{},V,\varepsilon)$. 
\end{itemize}
\noindent \rule[0.5ex]{1\columnwidth}{1pt}

We now make three remarks about the above algorithm. First, the matrices
$P$ and $Q$ in step~1 can be obtained by computing an SVD of $\acgMatX 0^{\lam}$.
Second, in view of Proposition~\ref{prop:acg_key_props}(a) and the
fact that $(\mu,M)$ in \eqref{eq:vec_acg_inputs} and \eqref{eq:gen_acg_inputs}
are the same, the iteration complexity is the same as the vanilla
R-ACG algorithm. Finally, because the functions $\psi_{s}^{{\cal V}}$
and $\psi_{n}^{{\cal V}}$ in \eqref{eq:vec_acg_inputs} have vector
inputs over $\r^{r}$, the steps in the spectral R-ACG algorithm are
significantly less costly than the ones in the R-ACG algorithm, which
use functions with matrix inputs over $\r^{m\times n}$.

The following result, whose proof is in the next subsection, presents
the key properties of this algorithm.
\begin{prop}
\label{prop:acg_implementation} The spectral R-ACG algorithm has
the following properties: 
\begin{itemize}
\item[(a)] if it stops with success, then its output triple $(\icgMatY{},V,\varepsilon)$
solves Problem~${\cal B}$ associated with \eqref{eq:icg_subprb}; 
\item[(b)] if $f_{2}$ is convex, then it always stops with success and its
output $(\icgMatY{},V,\varepsilon)$ solves Problem~${\cal A}$ associated
with \eqref{eq:icg_subprb}. 
\end{itemize}
\end{prop}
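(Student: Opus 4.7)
My plan is to reduce the matrix Problem~B (respectively Problem~A) associated with \eqref{eq:icg_subprb} to its vector counterpart associated with \eqref{eq:icg_vec_prox}, and then to invoke Proposition~\ref{prop:acg_properties} on the inner R-ACG call. First I would perform an algebraic rewriting. Using the SVD $\acgMatX 0^\lam = P[\dg \sigma(\acgMatX 0^\lam)]Q^{*}$ from step~1 and completing the square against $\lam \langle \nabla f_{1}(\acgMatX 0), U\rangle$, the matrix objective $\psi_{s} + \psi_{n}$ in \eqref{eq:icg_subprb} equals, up to an additive constant independent of $U$,
\begin{equation*}
\lam (f_{2}^{\mathcal{V}} \circ \sigma)(U) + \lam (h^{\mathcal{V}} \circ \sigma)(U) + \tfrac{1}{2}\|U - \acgMatX 0^\lam\|_{F}^{2}.
\end{equation*}
Consequently, after cancelling the quadratic $\tfrac{1}{2}\|U\|_{F}^{2}$ term, the matrix Problem~B inclusion $V \in \partial_{\varepsilon}(\psi - \tfrac{1}{2}\|\cdot - \icgMatY{}\|_{F}^{2})(\icgMatY{})$ is equivalent to the spectral inclusion $V + \acgMatX 0^\lam - \icgMatY{} \in \partial_{\varepsilon}[\lam (f_{2} + h)](\icgMatY{})$. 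A parallel identity for the vector problem shows that the vector Problem~B inclusion is equivalent to $v + \sigma(\acgMatX 0^\lam) - \icgY{} \in \partial_{\varepsilon}[\lam (f_{2}^{\mathcal{V}} + h^{\mathcal{V}})](\icgY{})$.

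For part (a), suppose the inner R-ACG call in step~2 succeeds and outputs $(\icgY{}, v, \varepsilon)$, so by Proposition~\ref{prop:acg_properties}(b) this triple solves the vector Problem~B. Setting $\icgMatY{} := P(\dg \icgY{})Q^{*}$ and $V := P(\dg v)Q^{*}$ as in step~3 yields $V + \acgMatX 0^\lam - \icgMatY{} = P\,\dg(v + \sigma(\acgMatX 0^\lam) - \icgY{})\,Q^{*}$; the Lewis-type spectral subdifferential calculus from Appendix~\ref{app:spectral} then lifts the vector $\varepsilon$-subdifferential inclusion at $\icgY{}$ to the required matrix $\varepsilon$-subdifferential inclusion at $\icgMatY{}$. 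The norm bound in \eqref{eq:cvx_inexact} transfers via the identity $\|V\|_{F} = \|v\|$ combined with $\|\icgMatY{} - \acgMatX 0\|_{F}^{2} \geq \|\icgY{} - \Dg(P^{*} \acgMatX 0 Q)\|^{2}$, which I would obtain by expanding the Frobenius norm under the unitary change of basis $\acgMatX 0 \mapsto P^{*} \acgMatX 0 Q$ and observing that the off-diagonal entries contribute a nonnegative slack. For the refinement inequality \eqref{eq:prb_B_Delta_ineq}, I would show that the matrix refinement output satisfies $\icgMatY{}_{r} = P(\dg \icgY{r})Q^{*}$ with $\icgY{r}$ the analogous vector refinement output, by reducing the matrix refinement subproblem \eqref{eq:z_hat_def} to its vector counterpart via von Neumann's trace inequality together with the absolute symmetry of $h^{\mathcal{V}}$. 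Since $u \mapsto P(\dg u)Q^{*}$ is a Frobenius isometry, this yields $\Delta_{1}(\icgMatY{}_{r}; \icgMatY{}, V) = \Delta_{1}(\icgY{r}; \icgY{}, v) \leq \varepsilon$, completing Problem~B.

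For part (b), if $f_{2}$ is convex then $\psi_{s}^{\mathcal{V}}$ in \eqref{eq:vec_acg_inputs} is $1$-strongly convex owing to its $\tfrac{1}{2}\|\cdot\|^{2}$ term, so Proposition~\ref{prop:acg_properties}(c) guarantees the inner R-ACG call always terminates with success and its output solves the stronger vector Problem~A. The same spectral lifting used in (a), now applied to the exact subdifferential instead of the $\varepsilon$-subdifferential, produces a matrix triple solving Problem~A associated with \eqref{eq:icg_subprb}. The main obstacle throughout is this subdifferential lifting, which requires an $\varepsilon$-analog of Lewis's classical theorem on subdifferentials of absolutely symmetric spectral functions and which I would import from Appendix~\ref{app:spectral}; once that lifting is available, the norm and refinement transfers reduce to mechanical consequences of the spectral block-diagonal structure.
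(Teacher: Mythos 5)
Your proposal is correct and follows essentially the same route as the paper: reduce to the vector problem, invoke Proposition~\ref{prop:acg_properties} on the inner call, lift the $\varepsilon$-subdifferential inclusion via the spectral calculus of Appendix~\ref{app:spectral} (Theorem~\ref{thm:spectral_approx_subdiff}), transfer the norm bound through the nonnegative off-diagonal slack (the paper's \eqref{eq:hpe_equiv}), and show the refined point shares a simultaneous SVD so that $\Delta_{1}$ is preserved (the paper's Lemmas~\ref{lem:spec_prox} and \ref{lem:psi_spec_props}). One small imprecision: in part (b) the lifted inclusion for Problem~${\cal A}$ is still an $\varepsilon$-subdifferential inclusion of the shifted function, not an exact subdifferential, so the same $\varepsilon$-analog of Lewis's theorem is what is used there as well.
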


\subsection{Proof of Proposition~\ref{prop:acg_implementation}}

For the sake of brevity, let $(\psi_{s},\psi_{n})$ be as in \eqref{eq:gen_acg_inputs}
and, using $P$ and $Q$ from the spectral R-ACG algorithm, define
for every $(u,U)\in\r^{r}\times\r^{m\times n}$, the functions 
\begin{gather*}
{\cal M}(u):=P(\dg u)Q^{*},\quad{\cal V}(U):=\Dg(P^{*}UQ),\\
\psi(U):=\psi_{s}(U)+\psi_{n}(U),\quad\psi^{{\cal V}}(u):=\psi_{s}^{{\cal V}}(u)+\psi_{n}^{{\cal V}}(u).
\end{gather*}
The first result relates $(\psi_{s},\psi_{n})$ to $(\psi_{s}^{{\cal V}},\psi_{n}^{{\cal V}})$.
\begin{lem}
\label{lem:psi_spec_props} Let $(\icgY{},v,\varepsilon)$ and $(\icgMatY{},V)$
be as in the spectral R-ACG algorithm. Then, the following properties
hold: 
\begin{itemize}
\item[(a)] we have 
\[
\psi_{n}^{{\cal V}}(\icgY{})=\psi_{n}(\icgMatY{}),\quad\psi_{s}^{{\cal V}}(\icgY{})+B_{0}^{\lam}=\psi_{s}(\icgMatY{}),
\]
where $B_{0}^{\lam}:=\lam f_{1}(\acgMatX 0)-\lam\inner{\nabla f_{1}(\acgMatX 0)}{\acgMatX 0}+\|{\acgMatX 0}\|_{F}^{2}/2$; 
\item[(b)] we have 
\begin{equation}
V\in\pt_{\varepsilon}\left(\psi-\frac{1}{2}\|\cdot-\icgMatY{}\|_{F}^{2}\right)(\icgMatY{})\iff v\in\pt_{\varepsilon}\left(\psi^{{\cal V}}-\frac{1}{2}\|\cdot-\icgY{}\|^{2}\right)(\icgY{}).\label{eq:mat_vec_spec_incl}
\end{equation}
\end{itemize}
\end{lem}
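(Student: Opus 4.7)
For part~(a), the plan is a direct expansion. Since $\icgMatY{}=P(\dg\icgY{})Q^{*}$ with $P,Q$ unitary, its singular values are the entries of $|\icgY{}|$ in nonincreasing order, and absolute symmetry of $f_{2}^{{\cal V}}$ and $h^{{\cal V}}$ then gives $f_{2}(\icgMatY{})=f_{2}^{{\cal V}}(\icgY{})$ and $h(\icgMatY{})=h^{{\cal V}}(\icgY{})$, from which the first identity in (a) is immediate. For the second identity I would expand $\ell_{f_{1}}(\icgMatY{};\acgMatX 0)+\tfrac{1}{2}\|\icgMatY{}-\acgMatX 0\|_{F}^{2}$ and use the two consequences of $\icgMatY{}$ and $\acgMatX 0^{\lam}$ sharing the unitary factors $P,Q$, namely $\inner{\icgMatY{}}{\acgMatX 0^{\lam}}=\inner{\icgY{}}{\sigma(\acgMatX 0^{\lam})}$ and $\|\icgMatY{}\|_{F}^{2}=\|\icgY{}\|^{2}$. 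Collecting the terms that do not depend on $\icgY{}$ yields exactly the constant $B_{0}^{\lam}$, and the remaining terms reproduce $\psi_{s}^{{\cal V}}(\icgY{})$.

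For part~(b), my plan is to reduce both sides of \eqref{eq:mat_vec_spec_incl} to $\varepsilon$-subdifferential inclusions of a common absolutely symmetric pair $(\tilde{g},\tilde{G}):=(\lam(f_{2}^{{\cal V}}+h^{{\cal V}}),\lam(f_{2}+h))$, which satisfies $\tilde{G}=\tilde{g}\circ\sigma$. Using part~(a) to eliminate $B_{0}^{\lam}$ and expanding the two quadratic terms, I would rewrite
\begin{align*}
\psi(U)-\tfrac{1}{2}\|U-\icgMatY{}\|_{F}^{2} &= \tilde{G}(U)-\inner{U}{A}+C,\\
\psi^{{\cal V}}(u)-\tfrac{1}{2}\|u-\icgY{}\|^{2} &= \tilde{g}(u)-\inner{u}{a}+C',
\end{align*}
where $A:=\acgMatX 0^{\lam}-\icgMatY{}$, $a:=\sigma(\acgMatX 0^{\lam})-\icgY{}$, and $C,C'$ are constants with $C-C'=B_{0}^{\lam}$. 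Since $A=P(\dg a)Q^{*}$, we also have $V+A=P(\dg(v+a))Q^{*}$, so each side of \eqref{eq:mat_vec_spec_incl} becomes, respectively, $V+A\in\pt_{\varepsilon}\tilde{G}(\icgMatY{})$ and $v+a\in\pt_{\varepsilon}\tilde{g}(\icgY{})$. The equivalence then collapses to the $\varepsilon$-subdifferential version of Lewis's spectral calculus identity for the absolutely symmetric function $\tilde{g}$, which I would invoke from Appendix~\ref{app:spectral}.

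The main obstacle lies in the reverse direction of that spectral identity, which is what makes the vector output of R-ACG usable at the matrix level. The forward direction (matrix $\Rightarrow$ vector) is routine: restrict $U=P(\dg u)Q^{*}$ in the defining inequality and apply the computations of part~(a). The reverse direction (vector $\Rightarrow$ matrix) is delicate: given the vector inequality for every $u\in\r^{r}$, one applies it with $u=\sigma(U)$ and then controls $\inner{V+A}{U-\icgMatY{}}$ in terms of $\inner{v+a}{\sigma(U)-\icgY{}}$ via von Neumann's trace inequality combined with absolute symmetry of $\tilde{g}$. The actual work is to verify that the shared SVD structure of $A$, $V$, and $\icgMatY{}$ lets the sign and ordering of the entries of $v+a$ line up with $\sigma(U)$ in the correct direction -- once this is in place, the appendix lemma delivers the equivalence and part~(b) follows.
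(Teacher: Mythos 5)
Your proposal is correct and follows essentially the same route as the paper: part (a) by direct expansion using the shared unitary factors, and part (b) by absorbing the linear and quadratic terms so that both inclusions become $V+\acgMatX 0^{\lam}-\icgMatY{}\in\pt_{\varepsilon}(\lam[f_{2}+h])(\icgMatY{})$ and $v+\sigma(\acgMatX 0^{\lam})-\icgY{}\in\pt_{\varepsilon}(\lam[f_{2}^{{\cal V}}+h^{{\cal V}}])(\icgY{})$, then invoking Theorem~\ref{thm:spectral_approx_subdiff} with the simultaneous-SVD observation. The only cosmetic difference is that you describe the inner workings of that theorem via von Neumann's trace inequality, whereas the paper proves it through the conjugate characterization of the $\varepsilon$-subdifferential and Lemma~\ref{lem:spec_prop}(a); since you invoke the appendix result rather than reprove it, the arguments coincide.
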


\begin{proof}
(a) The relationship between $\psi_{n}^{{\cal V}}$ and $\psi_{n}$
is immediate. On the other hand, using the definitions of $\icgMatY{},f_{2}$,
and $B_{0}^{\lam}$, we have 
\begin{align*}
 & \psi_{s}^{{\cal V}}(\icgY{})+B_{0}^{\lam}=\lam f_{2}(\icgMatY{})-\inner{\acgMatX 0^{\lam}}{\icgMatY{}}+\frac{1}{2}\|\icgMatY{}\|_{F}^{2}+B_{0}^{\lam}\\
 & =\lam\left[f_{2}(\icgMatY{})+f_{1}(\acgMatX 0)+\inner{\nabla f_{1}(\acgMatX 0)}{\icgMatY{}-\acgMatX 0}\right]+\frac{1}{2}\|\icgMatY{}-\acgMatX 0\|_{F}^{2}=\psi_{s}(\icgMatY{}).
\end{align*}

(b) Let $S_{0}=V+\acgMatX 0^{\lam}-\icgMatY{}$ and $s_{0}=v+\sigma(\acgMatX 0^{\lam})-\icgY{}$,
and note that $S_{0}={\cal M}(s_{0})$. Moreover, in view of part
(a) and the definition of $\psi$, observe that the left inclusion
in \eqref{eq:mat_vec_spec_incl} is equivalent to $S_{0}\in\pt_{\varepsilon}(\lam[f_{2}+h])(\icgMatY{})$.
Using this observation, the fact that $S_{0}$ and $\icgMatY{}$ have
a simultaneous SVD, and Theorem~\ref{thm:spectral_approx_subdiff}
with $(S,s)=(S_{0},s_{0})$, $\Psi=\lam[f_{2}+h]$, and $\Psi^{{\cal V}}=\lam[f_{2}^{{\cal V}}+h^{{\cal V}}]$,
we have that the left inclusion in \eqref{eq:mat_vec_spec_incl} is
also equivalent to $s_{0}\in\pt_{\varepsilon}(\lam[f_{2}^{{\cal V}}+h^{{\cal V}}])(\icgY{})$.
The conclusion now follows from the observation that the latter inclusion
is equivalent to the the right inclusion in \eqref{eq:mat_vec_spec_incl}. 
\end{proof}
We are now ready to give the proof of Proposition~\ref{prop:acg_implementation}.
\begin{proof}[Proof of Proposition~\ref{prop:acg_implementation}]
(a) Since $(\icgY{},v)=({\cal V}(\icgMatY{}),{\cal V}(V))$, notice
that the successful termination of the algorithm implies that the
inequality in \eqref{eq:cvx_inexact} and \eqref{eq:prb_B_Delta_ineq}
hold. Using this remark, the fact that $\|V\|_{F}^{2}=\|v\|^{2}$,
and the bound 
\begin{align}
 & \theta^{2}\|\acgX j-\acgX 0\|^{2}=\theta^{2}\left(\|\acgX j\|^{2}-2\inner{\acgX j}{{\cal V}(\acgX 0)}+\|\acgMatX 0\|_{F}^{2}\right)+\theta^{2}(\|{\cal V}(\acgX 0)\|^{2}-\|\acgMatX 0\|_{F}^{2})\nonumber \\
 & \leq\theta^{2}\left(\|\acgMatX j\|^{2}_F-2\inner{\acgMatX j}{\acgMatX 0}+\|\acgMatX 0\|_{F}^{2}\right)=\theta^{2}\|\acgMatX j-\acgMatX 0\|_{F}^{2},\label{eq:hpe_equiv}
\end{align}
we then have that the inequality in \eqref{eq:cvx_inexact} also holds
with $(\icgY{},v)=(\icgMatY{},V)$.

To show the corresponding inequality for \eqref{eq:prb_B_Delta_ineq},
let $(\icgMatY r,V_{r})=RP(\icgMatY{},V)$ using the refinement procedure
in Section~\ref{sec:background}. Moreover, let $(\icgY r,v_{r})=RP(\icgY{},v)$
and $\Delta_{1}^{{\cal V}}(\cdot;\cdot,\cdot)$ be as in \eqref{eq:Delta_def},
where $(\psi_{s},\psi_{n})=(\psi_{s}^{{\cal V}},\psi_{n}^{{\cal V}})$.
It now follows from \eqref{eq:z_hat_def}, \eqref{eq:v_hat_def},
Lemma~\ref{lem:spec_prox} with $\Psi=\psi_{n}$ and $S=V+M\icgMatY{}-\nabla\psi_{s}(\icgMatY{})$,
and Lemma~\ref{lem:spec_prop}(b) that $\icgMatY r,\icgMatY{},V$,
and $V_{r}$ have a simultaneous SVD. As a consequence of this, the
first remark, and Lemma~\ref{lem:psi_spec_props}(a), we have that
\begin{align*}
 & \varepsilon\geq\Delta_{1}^{{\cal V}}(\icgY r;\icgY{},v)=\psi^{{\cal V}}(\icgY{})-\psi^{{\cal V}}(\icgY r)-\inner v{\icgY{}-\icgY r}+\frac{1}{2}\|\icgY r-\icgY{}\|^{2}\\
 & =\psi(\icgMatY{})-\psi(\icgMatY r)-\inner V{\icgMatY{}-\icgMatY r}+\frac{1}{2}\|\icgMatY r-\icgMatY{}\|^{2}=\Delta_{1}(\icgMatY r;\icgMatY{},V), 
\end{align*}
and hence that \eqref{eq:prb_B_Delta_ineq} holds with $(\icgY{},v)=(\icgMatY{},V)$.

(b) This follows from part (a), Proposition~\ref{prop:acg_properties}(c),
and Lemma~\ref{lem:psi_spec_props}(b). 
\end{proof}

\section{\label{sec:computational}Computational Results}

This section presents computational results that highlight the performance
of the dynamic IA-ICG and dynamic DA-ICG methods, and it contains three subsections.
The first one describes the implementation details, the second presents
computational results related to a set of spectral composite problem,
while the third gives some general comments about the computational
results.

\subsection{Implementation Details}

This subsection precisely describes the implementation of the methods
and experiments of this section. Moreover, all of the code needed
to replicate these experiments is readily available online\footnote{See \href{https://github.com/wwkong/nc_opt/tree/master/tests/papers/icg}{https://github.com/wwkong/nc\_opt/tree/master/tests/papers/icg}.}.

We first describe some practical modifications to the dynamic IA-ICG method.
Given $\lam>0$ and $(\acgX j,\acgX 0)\in{\cal Z}^{2}$, denote 
\begin{align*}
\Delta_{\phi}^{\lam} & =4\lam\left[\phi(\acgX 0)-\widetilde{\ell}_{\phi}(\acgX j;\acgX 0)-\frac{M_{1}}{2}\|\acgX j-\acgX 0\|^{2}\right]
\end{align*}
where $\widetilde{\ell}_{\phi}$ is as in \eqref{eq:ell_phi_C_bar_lam_def}.
Motivated by the first inequality in the descent condition \eqref{eq:aicg_descent},
we relax \eqref{eq:acg_ineq} in the R-ACG call to the three separate
conditions: $\|\acgX j-\acgX 0\|^{2}\leq\Delta_{\phi}^{\lam},\|\acgU j\|^{2}\leq\Delta_{\phi}^{\lam}$,
and $2\eta_{j}\leq\Delta_{\phi}^{\lam}$.

We now describe some modifications and parameter choices that are
common to both methods. First, both ICG methods use the spectral R-ACG
algorithm of Subsection~\ref{subsec:spectral_exploit} in place of
the R-ACG algorithm of Section~\ref{sec:background}. Moreover, this
R-ACG variant uses a line search subroutine for estimating the upper
curvature $M$ that is used during its execution. Second, when each
of the dynamic ICG methods invokes their static counterparts, the
parameters $A_{0}$ and $\icgY 0$ are set to be the last obtained
parameters of the previous invocation or the original parameters if
it is the first invocation, i.e., we implement a warm--start strategy.
Third, we adaptively update $\lam$ at each outer iteration as follows:
given the old value of $\lam=\lam_{{\rm old}}$ at the $k^{{\rm th}}$
outer iteration, the new value of $\lam=\lam_{{\rm new}}$ at the
$(k+1)^{{\rm th}}$ iteration is given by 
\[
\lam_{{\rm new}}=\begin{cases}
\lam_{{\rm old}}, & r_{k}\in\left[0.5,2.0\right],\\
\lam_{{\rm old}}\cdot\sqrt{0.5}, & r_{k}<0.5,\\
\lam_{{\rm old}}\cdot\sqrt{2}, & r_{k}>2.0,
\end{cases}\quad r_{k}=\frac{\left[\lam(M_{2}^{+}+2m_{2}^{+})+1\right]\|\icgY k-\hat{\icgY{}}_{k}\|}{\|\hat{v}_{k}-\left[\lam(M_{2}^{+}+2m_{2}^{+})+1\right](\icgY k-\hat{\icgY{}}_{k})\|}.
\]
Fourth, we take $\mu=1/2$ rather than $\mu=1$ for each of R-ACG
calls in order to reduce the possibility of a failure from the R-ACG
algorithm. Fifth, in view of \eqref{eq:hpe_equiv}, we relax condition
\eqref{eq:acg_ineq} in the vector-based R-ACG call of Subsection~\ref{subsec:spectral_exploit}
to 
\[
\|\acgU j\|^{2}+2\eta_{j}\leq\theta^{2}\|\acgX j-\acgX 0\|^{2}+\tau,
\]
where $\tau:=\theta^{2}(\|\acgMatX 0\|_{F}^{2}-\|\acgX 0\|^{2})\geq0$.
Finally, both ICG methods choose the common hyperparameters $(\xi_{0},\lam,\theta)=(M_{1},5/M_{1},1/2)$
at initialization.

We now describe the five other benchmark methods considered. Throughout
their descriptions, we let $m=m_{1}+m_{2}$, $M=M_{1}+M_{2}$, and
$L=\max\{m,M\}$. The first method is Nesterov's efficient ECG method
of \cite{nesterov2007gradient} with $(\lam,\gamma_{u},\gamma_{d})=(100/L,2,2)$.
The second method is the accelerated inexact proximal point (AIPP)
method of \cite{WJRVarLam2018} with $(\lam,\theta,\tau)=(1/m,4,10[\lam M+1])$
and the R-AIPPv2 stepsize scheme. The third method is a variant of
the A-ECG method of \cite[Algorithm 2]{nonconv_lan16}, which we abbreviate
as AG. In particular, this variant chooses its parameters as in \cite[Corollary 2]{nonconv_lan16}
with $L_{\Psi}$ replaced by $M$, i.e., $\beta_{k}=1/(2M)$ for every
$k$ (implying a more aggressive stepsize policy). It is worth mentioning
that we tested the more conservative AG variant with $\beta_{k}=1/(2L_{\Psi})$
and observed that it performed substantially less efficient than the
above aggressive variant. The fourth method is a special implementation
of the adaptive A-ECG method in \cite{LanUniformly} with $(\gamma_{1},\gamma_{2},\gamma_{3})=(0.4,0.4,1.0)$
and $(\delta,\sigma)=(10^{-2},10^{-10})$, which we abbreviate as
UP. More specifically, we consider the UPFAG-fullBB method described
in \cite[Section 4]{LanUniformly}, which uses a Barzilai-Borwein
type stepsize selection strategy. The last is the A-ECG method of \cite{Liang2019},
named NC-FISTA, with $(\xi,\lam)=(1.05m,0.99/M)$, which we abbreviate
as NCF.

Finally, we state some additional details about the numerical experiments.
First, the problems considered are of the form in \eqref{eq:intro_prb}
and satisfy assumptions (A1)--(A4) with $f_{2}=f_{2}^{{\cal V}}\circ\sigma$
and $h=h^{{\cal V}}\circ\sigma$. Second, given a tolerance $\hat{\rho}>0$
and an initial point $\icgMatY 0\in\dom h$, every method in this
section seeks a pair $(\hat{\icgMatY{}},\hat{V})\in\dom h\times\r^{m\times n}$
satisfying 
\[
\hat{V}\in\nabla f_{1}(\hat{\icgMatY{}})+\nabla(f_{2}^{{\cal V}}\circ\sigma)(\hat{\icgMatY{}})+\pt(h^{{\cal V}}\circ\sigma)(\hat{\icgMatY{}}),\quad\frac{\|\hat{V}\|}{\|\nabla f_{1}(\icgMatY 0)+(f_{2}^{{\cal V}}\circ\sigma)(\icgMatY 0)\|+1}\leq\hat{\rho},
\]
and stops after 1000 seconds if such a point cannot be found. Third,
to be concise, we abbreviate the IA-ICG and DA-ICG methods as IA and
DA, respectively. Finally, all described algorithms are implemented
in MATLAB 2020a and are run on Linux 64-bit machines that contain
at least 8 GB of memory.

\subsection{Spectral Composite Problems}

This subsection presents computational results of a set of spectral
composite optimization problems and contains two sub-subsections.
The first one examines a class of nonconvex matrix completion problems,
while the second one examines a class of blockwise matrix completion
problems.

\subsubsection{Matrix completion}

\label{subsec:mc}

Given a quadruple $(\alpha,\beta,\mu,\theta)\in\r_{++}^{4}$, a data
matrix $A\in\r^{\ell\times n}$, and indices $\Omega$, this subsection
considers the following constrained matrix completion (MC) problem:
\begin{align*}
\begin{aligned}\min_{U\in\r^{m\times n}}\  & \frac{1}{2}\|P_{\Omega}(U-A)\|_{F}^{2}+\kappa_{\mu}\circ\sigma(U)+\tau_{\alpha}\circ\sigma(U)\\
\text{s.t.}\  & \|U\|_{F}^{2}\leq\sqrt{\ell n}\cdot\max_{i,j}|A_{ij}|,
\end{aligned}
\end{align*}
where $P_{\Omega}$ is the linear operator that zeros out any entry
that is not in $\Omega$ and 
\begin{align*}
\kappa_{\mu}(z)=\frac{\mu\beta}{\theta}\sum_{i=1}^{n}\log\left(1+\frac{|z_{i}|}{\theta}\right),\quad\tau_{\alpha}(z)=\alpha\beta\left[1-\exp\left(-\frac{\|z\|_{2}^{2}}{2\theta}\right)\right]
\end{align*}
for every $z\in\r^{n}$. Here, the function $\kappa_{\mu}+\tau_{\alpha}$
is a nonconvex generalization of the convex elastic net regularizer
(see, for example, \cite{Sun2012}), and it is well-known (see, for
example, \cite{Yao2017}) that the function $\kappa_{\mu}-\mu\|\cdot\|_{*}$
is concave, differentiable, and has a $(2\beta\mu/\theta^{2})$-Lipschitz
continuous gradient.

We now describe the different data matrices that are considered. Each
matrix $A\in\r^{\ell\times n}$ is obtained from a different collaborative
filtering system where each row represents a unique user, each column
represents a unique item, and each entry represents a particular rating.
Table~\ref{tabl:mc_data_mat} lists the names of each data set, where
the data originates from (in the footnotes), and some basic statistics
about the matrices.

\begin{table}
\centering{}%
\begin{tabular}{|>{\centering}m{2.7cm}|>{\centering}m{1cm}>{\centering}m{1cm}|>{\centering}m{1.7cm}|>{\centering}m{1.7cm}>{\centering}m{1.7cm}|}
\hline 
{\small{}Name} & {\small{}$\ell$ } & {\small{}$n$ } & {\small{}\% nonzero } & {\small{}$\min_{i,j}A_{ij}$ } & {\small{}$\max_{i,j}A_{ij}$}\tabularnewline
\hline 
{\footnotesize{}Anime}\tablefootnote{See the subset of the ratings from \url{https://www.kaggle.com/CooperUnion/anime-recommendations-database}
where each user has rated at least 720 items.}  & {\footnotesize{}506}  & {\footnotesize{}9437}  & {\footnotesize{}10.50\%}  & {\footnotesize{}1}  & {\footnotesize{}10}\tabularnewline
{\footnotesize{}FilmTrust}\tablefootnote{See the ratings in the file ``ratings.txt'' under the FilmTrust
section in \url{https://www.librec.net/datasets.html}.}  & {\footnotesize{}1508}  & {\footnotesize{}2071}  & {\footnotesize{}1.14\%}  & {\footnotesize{}0.5}  & {\footnotesize{}8}\tabularnewline
\hline 
\end{tabular}\caption{Description of the MC data matrices $A\in\protect\r^{m\times n}$.}
\label{tabl:mc_data_mat} 
\end{table}

We now describe the experiment parameters considered. First the starting
point $Z_{0}$ is randomly generated from a shifted binomial distribution
that closely follows the data matrix $A$. More specifically, the
entries of $Z_{0}$ are distributed according to a $\textsc{Binomial}(a,\mu/a)-\underline{A}$
distribution, where $\mu$ is the sample average of the nonzero entries
in $A$, the integer $a$ is the ceiling of the range of ratings in
$A$, and $\underline{A}$ is the minimum rating in $A$. Second,
the decomposition of the objective function is as follows 
\begin{gather}
f_{1}=\frac{1}{2}\|P_{\Omega}(\cdot-A)\|_{F}^{2},\quad f_{2}^{{\cal V}}=\mu\left[\kappa_{\mu}(\cdot)-\frac{\beta}{\theta}\|\cdot\|_{1}\right]+\tau_{\alpha}(\cdot),\quad h^{{\cal V}}=\frac{\mu\beta}{\theta}\|\cdot\|_{1}+\delta_{{\cal F}}(\cdot),\label{eq:mc_fn_decomp}
\end{gather}
where ${\cal F}=\{U\in\r^{m\times n}:\|U\|_{F}\leq\sqrt{\ell n}\cdot\max_{i,j}|A_{ij}|\}$
is the set of feasible solutions. Third, in view of the previous decomposition,
the curvature parameters are set to be 
\begin{equation}
m_{1}=0,\quad M_{1}=1,\quad m_{2}=\frac{2\beta\mu}{\theta^{2}}+\frac{2\alpha\beta}{\theta}\exp\left(\frac{-3\theta}{2}\right),\quad M_{2}=\frac{\alpha\beta}{\theta},\label{eq:mc_curv_param}
\end{equation}
where it can be shown that the smallest and largest eigenvalues of
$\nabla^{2}\tau_{\alpha}(z)$ are bounded below and above by $-2\alpha\beta\exp(-3\theta/2)/\theta$
and $\alpha\beta/\theta$, respectively, for every $z\in\r^{n}$.
Finally, each problem instance uses a specific data matrix $A$ from
Table~\ref{tabl:mc_data_mat}, the hyperparameters $(\alpha,\beta,\mu)=(10,20,2)$
and $\hat{\rho}=10^{-6}$, different values of the parameter $\theta$,
and $\Omega$ to be the index set of nonzero entries in the chosen
matrix $A$. 

We now present the results. Figure~\ref{fig:mc_graphs} presents
two subplots for the results of the Anime dataset under a value of
$\theta=10^{-1}$. The first subplot contains the log objective value
against runtime, while the second one contains the log of the minimal
subgradients, i.e. $\min_{i\leq k}\|\hat{V}_{i}\|$, against runtime.
Tables~\ref{tabl:mc_anime} to \ref{tabl:mc_filmtrust} present the
minimal subgradient size obtained within the time limit of 1000. Moreover,
each row of these tables corresponds to a different choice of $\theta$
and the bolded numbers highlight which algorithm performed the best
in terms of the size obtained in a run.

\begin{figure}[tbhp]
\begin{centering}
\includesvg[width=1\linewidth]{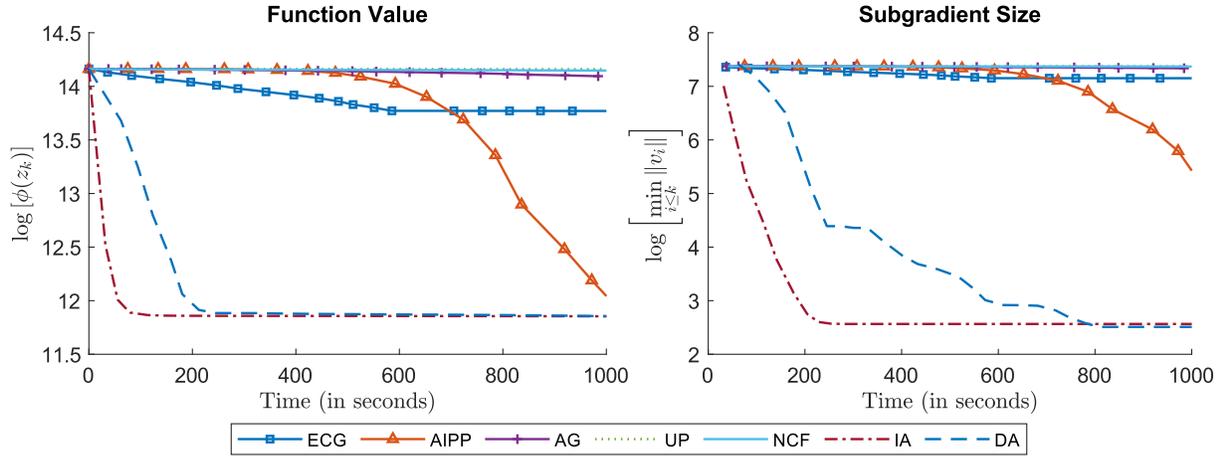}
\par\end{centering}
\caption{Function values and minimum subgradients for the Anime dataset with
$\theta=10^{-1}$.}
\label{fig:mc_graphs} 
\end{figure}

\begin{table}[tbh]
\begin{centering}
\begin{tabular}{|>{\centering}m{1.8cm}|>{\centering}m{0.8cm}|>{\centering}p{1cm}>{\centering}p{1cm}>{\centering}p{1cm}>{\centering}p{1cm}>{\centering}p{1cm}>{\centering}p{1cm}>{\centering}p{1cm}|}
\hline 
{\footnotesize{}Parameters} & {\footnotesize{}Time} & \multicolumn{7}{c|}{{\footnotesize{}Minimum Subgradient Size ($\min_{i\leq k}\|\hat{V}_{i}\|$)}}\tabularnewline
\cline{3-9} \cline{4-9} \cline{5-9} \cline{6-9} \cline{7-9} \cline{8-9} \cline{9-9} 
{\footnotesize{}$(\theta,m,M)^{T}$} & {\footnotesize{}$t$} & {\footnotesize{}ECG} & {\footnotesize{}AIPP} & {\footnotesize{}AG} & {\footnotesize{}UP} & {\footnotesize{}NCF} & {\footnotesize{}IA} & {\footnotesize{}DA}\tabularnewline
\hline 
\multirow{4}{1.8cm}{{\scriptsize{}$\left[\begin{array}{c}
1\\
169\\
201
\end{array}\right]$}} & {\scriptsize{}100} & {\scriptsize{}1088.6} & {\scriptsize{}1421.0} & {\scriptsize{}1568.9} & {\scriptsize{}1599.4} & {\scriptsize{}1488.8} & \textbf{\scriptsize{}13.0} & {\scriptsize{}78.5}\tabularnewline
 & {\scriptsize{}200} & {\scriptsize{}1088.6} & {\scriptsize{}221.9} & {\scriptsize{}1510.2} & {\scriptsize{}132.6} & {\scriptsize{}1362.4} & \textbf{\scriptsize{}11.6} & {\scriptsize{}39.2}\tabularnewline
 & {\scriptsize{}400} & {\scriptsize{}1088.6} & {\scriptsize{}55.6} & {\scriptsize{}1284.6} & \textbf{\scriptsize{}7.5} & {\scriptsize{}1147.9} & {\scriptsize{}11.6} & {\scriptsize{}11.1}\tabularnewline
 & {\scriptsize{}800} & {\scriptsize{}1088.6} & {\scriptsize{}7.7} & {\scriptsize{}716.7} & \textbf{\scriptsize{}7.5} & {\scriptsize{}862.7} & {\scriptsize{}11.6} & {\scriptsize{}11.1}\tabularnewline
\hline 
\multirow{4}{1.8cm}{{\scriptsize{}$\left[\begin{array}{c}
0.1\\
11443\\
2001
\end{array}\right]$}} & {\scriptsize{}100} & {\scriptsize{}1542.0} & {\scriptsize{}1595.8} & {\scriptsize{}1593.8} & {\scriptsize{}-} & {\scriptsize{}1595.0} & \textbf{\scriptsize{}189.7} & {\scriptsize{}1345.1}\tabularnewline
 & {\scriptsize{}200} & {\scriptsize{}1489.9} & {\scriptsize{}1595.0} & {\scriptsize{}1591.5} & {\scriptsize{}1595.2} & {\scriptsize{}1594.2} & \textbf{\scriptsize{}23.1} & {\scriptsize{}378.1}\tabularnewline
 & {\scriptsize{}400} & {\scriptsize{}1391.0} & {\scriptsize{}1587.8} & {\scriptsize{}1584.3} & {\scriptsize{}1595.1} & {\scriptsize{}1592.3} & \textbf{\scriptsize{}13.0} & {\scriptsize{}60.6}\tabularnewline
 & {\scriptsize{}800} & {\scriptsize{}1276.3} & {\scriptsize{}990.5} & {\scriptsize{}1557.2} & {\scriptsize{}1594.3} & {\scriptsize{}1589.2} & \textbf{\scriptsize{}13.0} & {\scriptsize{}13.0}\tabularnewline
\hline 
\multirow{4}{1.8cm}{{\scriptsize{}$\left[\begin{array}{c}
0.01\\
839400\\
20001
\end{array}\right]$}} & {\scriptsize{}100} & {\scriptsize{}1594.6} & {\scriptsize{}1595.9} & {\scriptsize{}1595.6} & {\scriptsize{}1595.8} & {\scriptsize{}1595.9} & \textbf{\scriptsize{}162.9} & {\scriptsize{}452.0}\tabularnewline
 & {\scriptsize{}200} & {\scriptsize{}1592.8} & {\scriptsize{}1595.6} & {\scriptsize{}1595.0} & {\scriptsize{}1595.8} & {\scriptsize{}1595.8} & \textbf{\scriptsize{}33.5} & {\scriptsize{}68.0}\tabularnewline
 & {\scriptsize{}400} & {\scriptsize{}1589.8} & {\scriptsize{}1569.5} & {\scriptsize{}1592.2} & {\scriptsize{}1595.8} & {\scriptsize{}1595.8} & {\scriptsize{}15.3} & \textbf{\scriptsize{}14.7}\tabularnewline
 & {\scriptsize{}800} & {\scriptsize{}1583.8} & {\scriptsize{}861.8} & {\scriptsize{}1582.3} & {\scriptsize{}1595.7} & {\scriptsize{}1595.7} & {\scriptsize{}15.3} & \textbf{\scriptsize{}14.1}\tabularnewline
\hline 
\end{tabular}
\par\end{centering}
\caption{Minimum subgradient sizes for the Anime dataset. Times are in seconds
and \textquotedblleft -\textquotedblright{} indicates a run that did
not generate a subgradient within the given time limit.}
\label{tabl:mc_anime}
\end{table}

\begin{table}[tbh]
\begin{centering}
\begin{tabular}{|>{\centering}m{1.8cm}|>{\centering}m{0.8cm}|>{\centering}p{1cm}>{\centering}p{1cm}>{\centering}p{1cm}>{\centering}p{1cm}>{\centering}p{1cm}>{\centering}p{1cm}>{\centering}p{1cm}|}
\hline 
{\footnotesize{}Parameters} & {\footnotesize{}Time} & \multicolumn{7}{c|}{{\footnotesize{}Minimum Subgradient Size ($\min_{i\leq k}\|\hat{V}_{i}\|$)}}\tabularnewline
\cline{3-9} \cline{4-9} \cline{5-9} \cline{6-9} \cline{7-9} \cline{8-9} \cline{9-9} 
{\footnotesize{}$(\theta,m,M)^{T}$} & {\footnotesize{}$t$} & {\footnotesize{}ECG} & {\footnotesize{}AIPP} & {\footnotesize{}AG} & {\footnotesize{}UP} & {\footnotesize{}NCF} & {\footnotesize{}IA} & {\footnotesize{}DA}\tabularnewline
\hline 
\multirow{4}{1.8cm}{{\scriptsize{}$\left[\begin{array}{c}
1\\
169\\
201
\end{array}\right]$}} & {\scriptsize{}100} & {\scriptsize{}127.1} & {\scriptsize{}328.6} & {\scriptsize{}328.5} & {\scriptsize{}-} & {\scriptsize{}326.2} & \textbf{\scriptsize{}77.7} & {\scriptsize{}342.4}\tabularnewline
 & {\scriptsize{}200} & {\scriptsize{}106.7} & {\scriptsize{}326.2} & {\scriptsize{}326.8} & {\scriptsize{}330.0} & {\scriptsize{}319.4} & \textbf{\scriptsize{}60.7} & {\scriptsize{}203.1}\tabularnewline
 & {\scriptsize{}400} & {\scriptsize{}106.7} & {\scriptsize{}294.6} & {\scriptsize{}319.2} & {\scriptsize{}330.0} & {\scriptsize{}305.9} & \textbf{\scriptsize{}60.7} & {\scriptsize{}186.4}\tabularnewline
 & {\scriptsize{}800} & {\scriptsize{}106.7} & {\scriptsize{}107.4} & {\scriptsize{}291.0} & {\scriptsize{}251.9} & {\scriptsize{}280.5} & \textbf{\scriptsize{}60.7} & {\scriptsize{}186.4}\tabularnewline
\hline 
\multirow{4}{1.8cm}{{\scriptsize{}$\left[\begin{array}{c}
0.1\\
11443\\
2001
\end{array}\right]$}} & {\scriptsize{}100} & {\scriptsize{}309.0} & {\scriptsize{}330.0} & {\scriptsize{}329.6} & {\scriptsize{}329.9} & {\scriptsize{}329.9} & \textbf{\scriptsize{}71.0} & {\scriptsize{}242.3}\tabularnewline
 & {\scriptsize{}200} & {\scriptsize{}287.0} & {\scriptsize{}326.9} & {\scriptsize{}327.8} & {\scriptsize{}329.9} & {\scriptsize{}329.5} & \textbf{\scriptsize{}71.0} & {\scriptsize{}235.4}\tabularnewline
 & {\scriptsize{}400} & {\scriptsize{}248.0} & {\scriptsize{}188.7} & {\scriptsize{}321.9} & {\scriptsize{}329.8} & {\scriptsize{}328.8} & \textbf{\scriptsize{}71.0} & {\scriptsize{}202.7}\tabularnewline
 & {\scriptsize{}800} & {\scriptsize{}186.9} & {\scriptsize{}188.7} & {\scriptsize{}301.8} & {\scriptsize{}329.4} & {\scriptsize{}327.4} & \textbf{\scriptsize{}71.0} & {\scriptsize{}202.7}\tabularnewline
\hline 
\multirow{4}{1.8cm}{{\scriptsize{}$\left[\begin{array}{c}
0.01\\
839400\\
20001
\end{array}\right]$}} & {\scriptsize{}100} & {\scriptsize{}330.1} & {\scriptsize{}330.2} & {\scriptsize{}330.2} & {\scriptsize{}-} & {\scriptsize{}330.2} & \textbf{\scriptsize{}91.8} & {\scriptsize{}263.9}\tabularnewline
 & {\scriptsize{}200} & {\scriptsize{}330.0} & {\scriptsize{}330.2} & {\scriptsize{}330.2} & {\scriptsize{}330.2} & {\scriptsize{}330.2} & \textbf{\scriptsize{}91.8} & {\scriptsize{}262.1}\tabularnewline
 & {\scriptsize{}400} & {\scriptsize{}329.7} & {\scriptsize{}330.2} & {\scriptsize{}330.1} & {\scriptsize{}330.2} & {\scriptsize{}330.2} & \textbf{\scriptsize{}91.8} & {\scriptsize{}262.1}\tabularnewline
 & {\scriptsize{}800} & {\scriptsize{}329.2} & {\scriptsize{}328.7} & {\scriptsize{}329.7} & {\scriptsize{}330.2} & {\scriptsize{}330.2} & \textbf{\scriptsize{}91.8} & {\scriptsize{}262.1}\tabularnewline
\hline 
\end{tabular}
\par\end{centering}
\caption{Minimum subgradient sizes for the FilmTrust dataset. Times are in
seconds and \textquotedblleft -\textquotedblright{} indicates a run
that did not generate a subgradient within the given time limit.}
\label{tabl:mc_filmtrust}
\end{table}

\subsubsection{Blockwise matrix completion}

Given a quadruple $(\alpha,\beta,\mu,\theta)\in\r_{++}^{4}$, a block
decomposable data matrix $A\in\r^{\ell\times n}$ with blocks $\{A_{i}\}_{i=1}^{k}\subseteq\r^{p\times q}$,
and indices $\Omega$, this subsection considers the following constrained
blockwise matrix completion (BMC) problem: 
\begin{align*}
\begin{aligned}\min_{U\in\r^{m\times n}}\  & \frac{1}{2}\|P_{\Omega}(U-A)\|_{F}^{2}+\sum_{i=1}^{k}\left[\kappa_{\mu}\circ\sigma(U_{i})+\tau_{\alpha}\circ\sigma(U_{i})\right]\\
\text{s.t.}\  & \|U\|_{F}^{2}\leq\sqrt{\ell n}\cdot\max_{i,j}|A_{ij}|,
\end{aligned}
\end{align*}
where $P_{\Omega}$, $\kappa_{\mu}$, and $\tau_{\alpha}$ are as
in Subsection~\ref{subsec:mc} and $U_{i}\in\r^{p\times q}$ is the
$i^{{\rm th}}$ block of $U$ with the same indices as $A_{i}$ with
respect to $A$.

We now describe the two classes of data matrices that are considered.
Every data matrix is a $5$-by-$5$ block matrix consisting of $50$-by-$100$
sized submatrices. Every submatrix contains only 25\% nonzero entries
and each data matrix generates its submatrix entries from different
probability distributions. More specifically, for a sampled probability
$p\sim\textsc{Uniform}[0,1]$ specific to a fixed submatrix, one class
uses a $\textsc{Binomial}(n,p)$ distribution with $n=10$, while
the other uses a $\textsc{TruncatedNormal}(\mu,\sigma)$ distribution
with $\mu=10p$, $\sigma^{2}=10p(1-p)$, and upper and lower bounds
$0$ and $10$, respectively.

We now describe the experiment parameters considered. First, the the
decomposition of the objective function and the quantities $Z_{0}$,
$(m_{1},M_{1})$, $(m_{2},M_{2})$, $\hat{\rho}$, and $\Omega$ are
the same as in Subsection~\ref{subsec:mc}. Second, we fix $(\alpha,\beta,\mu)=(10,20,2)$
and vary $(\theta,A)$ across the different problem instances. 

We now present the results. Figure~\ref{fig:bmc_tnormal} contains
the plots of the log objective function value against the runtime
for the binomial data set, listed in increasing order of $M_{2}$.
The corresponding plots for the truncated normal data set are similar
to the binomial plots so we omit them for the sake of brevity. Tables~\ref{tabl:bmc_binom}
and \ref{tabl:bmc_tnorm} present the minimal subgradient size obtained
within the time limit of 1000. Moreover, each row of these tables
corresponds to a different choice of $\theta$ and the bolded numbers
highlight which algorithm performed the best in terms of the size
obtained in a run.

\begin{figure}[tbhp]
\begin{centering}
\includesvg[width=1\linewidth]{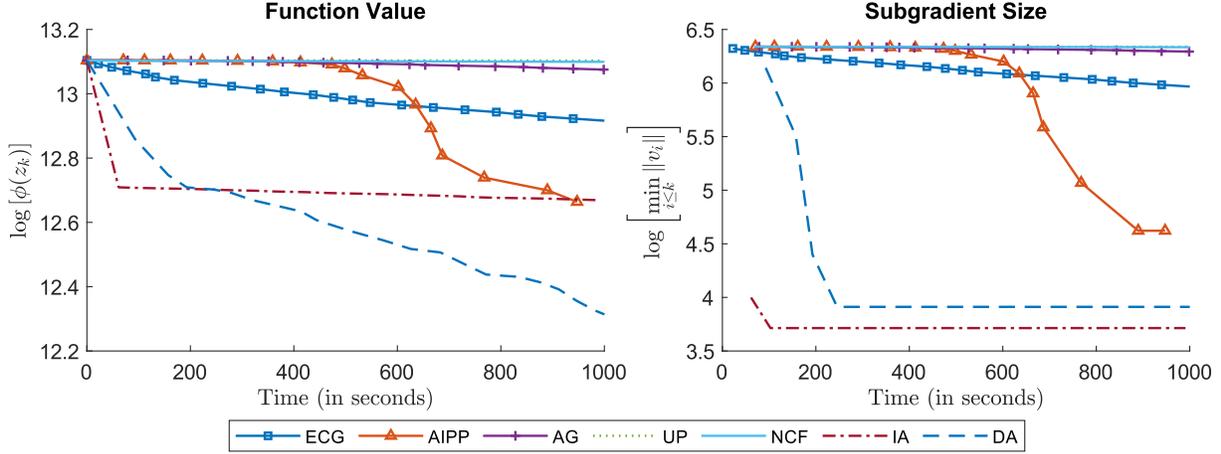}
\par\end{centering}
\caption{Function values and minimum subgradients for the truncated normal
dataset with $\theta=10^{-1}$.}

\centering{}\label{fig:bmc_tnormal} 
\end{figure}

\begin{table}[tbh]
\begin{centering}
\begin{tabular}{|>{\centering}m{1.8cm}|>{\centering}m{0.8cm}|>{\centering}p{1cm}>{\centering}p{1cm}>{\centering}p{1cm}>{\centering}p{1cm}>{\centering}p{1cm}>{\centering}p{1cm}>{\centering}p{1cm}|}
\hline 
{\footnotesize{}Parameters} & {\footnotesize{}Time} & \multicolumn{7}{c|}{{\footnotesize{}Minimum Subgradient Size ($\min_{i\leq k}\|\hat{V}_{i}\|$)}}\tabularnewline
\cline{3-9} \cline{4-9} \cline{5-9} \cline{6-9} \cline{7-9} \cline{8-9} \cline{9-9} 
{\footnotesize{}$(\theta,m,M)^{T}$} & {\footnotesize{}$t$} & {\footnotesize{}ECG} & {\footnotesize{}AIPP} & {\footnotesize{}AG} & {\footnotesize{}UP} & {\footnotesize{}NCF} & {\footnotesize{}IA} & {\footnotesize{}DA}\tabularnewline
\hline 
\multirow{4}{1.8cm}{{\scriptsize{}$\left[\begin{array}{c}
1\\
169\\
201
\end{array}\right]$}} & {\scriptsize{}100} & {\scriptsize{}392.4} & {\scriptsize{}500.3} & {\scriptsize{}501.2} & {\scriptsize{}506.0} & {\scriptsize{}482.5} & \textbf{\scriptsize{}33.9} & {\scriptsize{}75.5}\tabularnewline
 & {\scriptsize{}200} & {\scriptsize{}392.4} & {\scriptsize{}478.4} & {\scriptsize{}492.3} & {\scriptsize{}506.0} & {\scriptsize{}465.0} & \textbf{\scriptsize{}33.9} & {\scriptsize{}43.2}\tabularnewline
 & {\scriptsize{}400} & {\scriptsize{}392.4} & {\scriptsize{}182.2} & {\scriptsize{}455.9} & {\scriptsize{}57.1} & {\scriptsize{}407.0} & \textbf{\scriptsize{}33.9} & {\scriptsize{}43.2}\tabularnewline
 & {\scriptsize{}800} & {\scriptsize{}392.4} & {\scriptsize{}36.7} & {\scriptsize{}320.6} & {\scriptsize{}57.1} & {\scriptsize{}284.3} & \textbf{\scriptsize{}33.9} & {\scriptsize{}43.2}\tabularnewline
\hline 
\multirow{4}{1.8cm}{{\scriptsize{}$\left[\begin{array}{c}
0.1\\
11443\\
2001
\end{array}\right]$}} & {\scriptsize{}100} & {\scriptsize{}489.1} & {\scriptsize{}505.9} & {\scriptsize{}505.7} & {\scriptsize{}-} & {\scriptsize{}505.8} & \textbf{\scriptsize{}43.4} & {\scriptsize{}416.0}\tabularnewline
 & {\scriptsize{}200} & {\scriptsize{}476.9} & {\scriptsize{}505.6} & {\scriptsize{}505.3} & {\scriptsize{}505.5} & {\scriptsize{}505.5} & \textbf{\scriptsize{}43.4} & {\scriptsize{}76.9}\tabularnewline
 & {\scriptsize{}400} & {\scriptsize{}449.5} & {\scriptsize{}503.4} & {\scriptsize{}503.1} & {\scriptsize{}505.5} & {\scriptsize{}505.0} & \textbf{\scriptsize{}43.4} & {\scriptsize{}53.8}\tabularnewline
 & {\scriptsize{}800} & {\scriptsize{}399.4} & {\scriptsize{}240.8} & {\scriptsize{}496.2} & {\scriptsize{}505.3} & {\scriptsize{}503.9} & \textbf{\scriptsize{}43.4} & {\scriptsize{}53.8}\tabularnewline
\hline 
\multirow{4}{1.8cm}{{\scriptsize{}$\left[\begin{array}{c}
0.01\\
839400\\
20001
\end{array}\right]$}} & {\scriptsize{}100} & {\scriptsize{}505.6} & {\scriptsize{}505.9} & {\scriptsize{}505.8} & {\scriptsize{}505.9} & {\scriptsize{}505.9} & \textbf{\scriptsize{}48.6} & {\scriptsize{}137.5}\tabularnewline
 & {\scriptsize{}200} & {\scriptsize{}505.1} & {\scriptsize{}505.9} & {\scriptsize{}505.7} & {\scriptsize{}505.9} & {\scriptsize{}505.9} & \textbf{\scriptsize{}48.6} & {\scriptsize{}58.6}\tabularnewline
 & {\scriptsize{}400} & {\scriptsize{}504.1} & {\scriptsize{}498.1} & {\scriptsize{}504.9} & {\scriptsize{}505.9} & {\scriptsize{}505.9} & \textbf{\scriptsize{}48.6} & {\scriptsize{}58.6}\tabularnewline
 & {\scriptsize{}800} & {\scriptsize{}502.2} & {\scriptsize{}176.9} & {\scriptsize{}502.1} & {\scriptsize{}505.9} & {\scriptsize{}505.9} & \textbf{\scriptsize{}48.6} & {\scriptsize{}58.6}\tabularnewline
\hline 
\end{tabular}
\par\end{centering}
\caption{Minimum subgradient sizes for the binomial dataset. Times are in seconds
and \textquotedblleft -\textquotedblright{} indicates a run that did
not generate a subgradient within the given time limit.}

\label{tabl:bmc_binom}
\end{table}

\begin{table}[tbh]
\begin{centering}
\begin{tabular}{|>{\centering}m{1.8cm}|>{\centering}m{0.8cm}|>{\centering}p{1cm}>{\centering}p{1cm}>{\centering}p{1cm}>{\centering}p{1cm}>{\centering}p{1cm}>{\centering}p{1cm}>{\centering}p{1cm}|}
\hline 
{\footnotesize{}Parameters} & {\footnotesize{}Time} & \multicolumn{7}{c|}{{\footnotesize{}Minimum Subgradient Size ($\min_{i\leq k}\|\hat{V}_{i}\|$)}}\tabularnewline
\cline{3-9} \cline{4-9} \cline{5-9} \cline{6-9} \cline{7-9} \cline{8-9} \cline{9-9} 
{\footnotesize{}$(\theta,m,M)^{T}$} & {\footnotesize{}$t$} & {\footnotesize{}ECG} & {\footnotesize{}AIPP} & {\footnotesize{}AG} & {\footnotesize{}UP} & {\footnotesize{}NCF} & {\footnotesize{}IA} & {\footnotesize{}DA}\tabularnewline
\hline 
\multirow{4}{1.8cm}{{\footnotesize{}$\left[\begin{array}{c}
1\\
169\\
201
\end{array}\right]$}} & {\footnotesize{}100} & {\scriptsize{}-} & {\scriptsize{}564.3} & {\scriptsize{}562.7} & {\scriptsize{}-} & {\scriptsize{}552.2} & \textbf{\scriptsize{}39.1} & {\scriptsize{}362.3}\tabularnewline
 & {\footnotesize{}200} & {\scriptsize{}433.5} & {\scriptsize{}551.8} & {\scriptsize{}554.1} & {\scriptsize{}566.6} & {\scriptsize{}536.2} & \textbf{\scriptsize{}30.0} & {\scriptsize{}80.3}\tabularnewline
 & {\footnotesize{}400} & {\scriptsize{}433.5} & {\scriptsize{}351.5} & {\scriptsize{}526.6} & {\scriptsize{}566.6} & {\scriptsize{}501.7} & \textbf{\scriptsize{}30.0} & {\scriptsize{}40.8}\tabularnewline
 & {\footnotesize{}800} & {\scriptsize{}433.5} & {\scriptsize{}35.6} & {\scriptsize{}433.7} & {\scriptsize{}55.8} & {\scriptsize{}435.7} & \textbf{\scriptsize{}30.0} & {\scriptsize{}40.8}\tabularnewline
\hline 
\multirow{4}{1.8cm}{{\footnotesize{}$\left[\begin{array}{c}
0.1\\
11443\\
2001
\end{array}\right]$}} & {\footnotesize{}100} & {\scriptsize{}533.8} & {\scriptsize{}566.4} & {\scriptsize{}566.2} & {\scriptsize{}-} & {\scriptsize{}566.2} & \textbf{\scriptsize{}41.0} & {\scriptsize{}465.0}\tabularnewline
 & {\footnotesize{}200} & {\scriptsize{}507.4} & {\scriptsize{}566.1} & {\scriptsize{}565.7} & {\scriptsize{}566.0} & {\scriptsize{}566.0} & \textbf{\scriptsize{}41.0} & {\scriptsize{}81.4}\tabularnewline
 & {\footnotesize{}400} & {\scriptsize{}478.2} & {\scriptsize{}563.6} & {\scriptsize{}561.8} & {\scriptsize{}566.0} & {\scriptsize{}565.6} & \textbf{\scriptsize{}41.0} & {\scriptsize{}50.0}\tabularnewline
 & {\footnotesize{}800} & {\scriptsize{}417.6} & {\scriptsize{}159.0} & {\scriptsize{}549.9} & {\scriptsize{}565.8} & {\scriptsize{}564.4} & \textbf{\scriptsize{}41.0} & {\scriptsize{}50.0}\tabularnewline
\hline 
\multirow{4}{1.8cm}{{\footnotesize{}$\left[\begin{array}{c}
0.01\\
839400\\
20001
\end{array}\right]$}} & {\footnotesize{}100} & {\scriptsize{}565.5} & {\scriptsize{}566.4} & {\scriptsize{}566.2} & {\scriptsize{}566.4} & {\scriptsize{}566.4} & \textbf{\scriptsize{}45.8} & {\scriptsize{}54.3}\tabularnewline
 & {\footnotesize{}200} & {\scriptsize{}564.6} & {\scriptsize{}563.9} & {\scriptsize{}565.5} & {\scriptsize{}566.4} & {\scriptsize{}566.4} & \textbf{\scriptsize{}45.8} & {\scriptsize{}54.3}\tabularnewline
 & {\footnotesize{}400} & {\scriptsize{}562.7} & {\scriptsize{}186.1} & {\scriptsize{}563.1} & {\scriptsize{}566.3} & {\scriptsize{}566.4} & \textbf{\scriptsize{}45.8} & {\scriptsize{}54.3}\tabularnewline
 & {\footnotesize{}800} & {\scriptsize{}559.1} & {\scriptsize{}143.6} & {\scriptsize{}555.6} & {\scriptsize{}566.3} & {\scriptsize{}566.3} & \textbf{\scriptsize{}45.8} & {\scriptsize{}54.3}\tabularnewline
\hline 
\end{tabular}
\par\end{centering}
\caption{Minimum subgradient sizes for the truncated normal dataset. Times
are in seconds and \textquotedblleft -\textquotedblright{} indicates
a run that did not generate a subgradient within the given time limit.}

\label{tabl:bmc_tnorm}
\end{table}

\subsection{General Comments}

This subsection makes two comments about the results obtained in the previous subsection.
First, within the alloted time (i.e., 1000 seconds),
the DA-ICG and IA-ICG methods obtained approximate solutions with small primal residual $\|\hat{V}_{k}\|$
much faster than  the other
first-order methods. More specifically, the former methods were able to obtain higher quality solutions much sooner
than the latter ones, i.e, within the first 100 seconds.
Second, the larger the ratio $m/M$
is, the more efficient the ICG methods are compared to the other benchmarked
methods.

\section{Static ICG Iteration Complexities}

\label{sec:icg_cvg_rate}

This section establishes the iteration complexities for each of the
static ICG methods in Section~\ref{sec:accelerated_icg}.

\subsection{\label{subsec:aicg}Static IA-ICG Iteration Complexity}

This subsection establishes the key properties of the static IA-ICG
method. 
\begin{lem}
\label{lem:aicg_resid_rate}Let $\{(\icgY i,\hat{\icgY{}}_{i},\hat{v}_{i})\}_{i=1}^{k}$
be the collection of iterates generated by the static IA-ICG method.
For every $i\geq1$, we have 
\begin{align}
\frac{1}{4\lam}\|\icgY{i-1}-\icgY i\|^{2} & \leq\phi(\icgY{i-1})-\widetilde{\ell}_{\phi}(\icgY i;\icgY{i-1})-\frac{M_{1}}{2}\|\icgY i-\icgY{i-1}\|^{2}\leq\phi(\icgY{i-1})-\phi(\icgY i),\label{eq:aicg_descent}
\end{align}
where $\widetilde{\ell}_{\phi}$ is as in \eqref{eq:ell_phi_C_bar_lam_def}. 
\end{lem}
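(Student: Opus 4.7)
The plan is to establish the two inequalities separately, with the right-hand one being essentially a direct application of assumption (A2) and the left-hand one being a consequence of the inner inexactness condition solved by the R-ACG call.

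For the second inequality, notice that after cancelling $\phi(z_{i-1})$ from both sides it reduces to the prox-descent-type bound
\[
\phi(z_i) \leq \widetilde{\ell}_{\phi}(z_i;z_{i-1}) + \frac{M_1}{2}\|z_i - z_{i-1}\|^2,
\]
and, since $f_2$ and $h$ appear identically on both sides of $\widetilde{\ell}_{\phi}$ and $\phi$, this reduces further to $f_1(z_i) \leq \ell_{f_1}(z_i;z_{i-1}) + (M_1/2)\|z_i - z_{i-1}\|^2$, which is exactly the upper half of \eqref{eq:curv_fi} for $i=1$.

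For the first inequality, I would unfold the definitions. At the $i$th outer iteration, the R-ACG call on Problem~${\cal B}$ with inputs as in \eqref{eq:gen_acg_inputs} and $z_0^{acg} = z_{i-1}$ produces $(z_i, v_i, \varepsilon_i)$ satisfying the inequality in \eqref{eq:cvx_inexact}, namely $\|v_i\|^2 + 2\varepsilon_i \leq \theta^2 \|z_i - z_{i-1}\|^2$, and, because step~2 of the static IA-ICG method was successful, also $\Delta_1(z_{i-1}; z_i, v_i) \leq \varepsilon_i$. Writing $\psi(u) = \lambda \widetilde{\ell}_\phi(u; z_{i-1}) + \tfrac{1}{2}\|u - z_{i-1}\|^2$ and noting $\psi(z_{i-1}) = \lambda \phi(z_{i-1})$, expanding $\Delta_1(z_{i-1}; z_i, v_i)$ via \eqref{eq:Delta_def} with $\mu = 1$ gives
\[
\lambda\bigl[\widetilde{\ell}_\phi(z_i; z_{i-1}) - \phi(z_{i-1})\bigr] + \|z_i - z_{i-1}\|^2 - \langle v_i, z_i - z_{i-1}\rangle \leq \varepsilon_i.
\]

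From here the key maneuver is to bound the cross term by Young's inequality $\langle v_i, z_i - z_{i-1}\rangle \leq \tfrac{1}{2}\|v_i\|^2 + \tfrac{1}{2}\|z_i - z_{i-1}\|^2$ and then absorb $\tfrac{1}{2}\|v_i\|^2 + \varepsilon_i$ using the inequality in \eqref{eq:cvx_inexact}, which produces
\[
\lambda\bigl[\phi(z_{i-1}) - \widetilde{\ell}_\phi(z_i; z_{i-1})\bigr] \geq \tfrac{1}{2}(1 - \theta^2)\|z_i - z_{i-1}\|^2.
\]
Subtracting $(\lambda M_1/2)\|z_i - z_{i-1}\|^2$ from both sides, dividing by $\lambda$, and invoking the stepsize restriction \eqref{eq:aicg_lam_restr} in the form $1 - \theta^2 - \lambda M_1 \geq 1/2$ then yields the desired $1/(4\lambda)$ coefficient.

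The main obstacle is identifying precisely which $\Delta_\mu$ evaluation is available from the algorithm — the step~2 check is at $u = z_{i-1}$, \emph{not} at the refined point $\hat{z}_r$ used elsewhere in Problem~${\cal B}$ — and pairing it cleanly with Young's inequality and \eqref{eq:aicg_lam_restr}. Everything else is bookkeeping on the definitions of $\psi$, $\widetilde{\ell}_\phi$, and $\Delta_1$.
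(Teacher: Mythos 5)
Your proposal is correct and follows essentially the same route as the paper's proof: the left inequality is obtained by expanding $\Delta_1(\icgY{i-1};\icgY i,v_i)\leq\varepsilon_i$ from step~2, bounding the cross term via Young's inequality, absorbing $\|v_i\|^2+2\varepsilon_i$ with the inequality in \eqref{eq:cvx_inexact}, and invoking \eqref{eq:aicg_lam_restr}, while the right inequality is the upper bound in \eqref{eq:curv_fi} for $f_1$. Your remark that the relevant $\Delta_1$ evaluation is at $u=\icgY{i-1}$ rather than at the refined point is exactly the observation the paper relies on.
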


\begin{proof}
Let $i\geq1$ be fixed and let $(\icgY i,v_{i},\varepsilon_{i})$
be the point output by the $i^{{\rm th}}$ successful call to the
R-ACG algorithm. Moreover, let $\Delta_{1}(\cdot;\cdot,\cdot)$ be
as in \eqref{eq:Delta_def} with $(\psi_{s},\psi_{n})$ given by \eqref{eq:gen_acg_inputs}.
Using the definition of $\widetilde{\ell}_{\phi}$, step~2 of the
method, and fact that $(\aicgY{},v,\varepsilon)=(\icgY i,v_{i},\varepsilon_{i})$
solves Problem~${\cal B}$ in Section~\ref{sec:background} with
$(\mu,\psi_{s},\psi_{n})$ as in \eqref{eq:gen_acg_inputs}, we have
that 
\[
\varepsilon_{i}\geq\Delta_{1}(\icgY{i-1};\icgY i,v_{i})=\lam\widetilde{\ell}_{\phi}(\icgY i;\icgY{i-1})-\lam\phi(\icgY{i-1})-\inner{v_{i}}{\icgY i-\icgY{i-1}}+\|\icgY i-\icgY{i-1}\|^{2}.
\]
Rearranging the above inequality and using assumption (A2), \eqref{eq:aicg_lam_restr},
and the fact that $\left\langle a,b\right\rangle \geq-\|a\|^{2}/2-\|b\|^{2}/2$
for every $a,b\in{\cal Z}$ yields 
\begin{align}
 & \lam\phi(\icgY{i-1})-\lam\widetilde{\ell}_{\phi}(\icgY i;\icgY{i-1})\geq\left\langle v_{i},\icgY{i-1}-\icgY i\right\rangle -\varepsilon_{i}+\|\icgY i-\icgY{i-1}\|^{2}\nonumber \\
 & =\frac{1}{2}\|\icgY i-\icgY{i-1}\|^{2}-\frac{1}{2}\left(\|v_{i}\|^{2}+2\varepsilon_{i}\right)\geq\left(\frac{1-\theta^{2}}{2}\right)\|\icgY i-\icgY{i-1}\|^{2}\nonumber \\
 & =\frac{\lam M_{1}}{2}\|\icgY i-\icgY{i-1}\|^{2}+\left(\frac{1-\lam M_{1}-\theta^{2}}{2}\right)\|\icgY i-\icgY{i-1}\|^{2}\nonumber \\
 & =\frac{\lam M_{1}}{2}\|\icgY i-\icgY{i-1}\|^{2}+\frac{1}{4}\|\icgY i-\icgY{i-1}\|^{2}.\label{eq:main_aicg_resid_bd1}
\end{align}
Rearranging terms yields the first inequality of \eqref{eq:aicg_descent}.
The second inequality of \eqref{eq:aicg_descent} follows from the
first inequality, the fact that $\widetilde{\ell}_{\phi}(\icgY i;\icgY{i-1})+M_{1}\|\icgY i-\icgY{i-1}\|^{2}/2\geq\phi(\icgY i)$
from assumption (A2), and the definition of $\widetilde{\ell}_{\phi}$. 
\end{proof}
The next results establish the rate at which the residual $\|\hat{v}_{i}\|$
tends to 0. 
\begin{lem}
\label{lem:p_norm_tech}Let $p>1$ be given. Then, for every $a,b\in\r^{k}$,
we have 
\[
\min_{1\leq i\leq k}\left\{ |a_{i}b_{i}|\right\} \leq k^{-p}\left\Vert a\right\Vert _{1}\left\Vert b\right\Vert _{1/(p-1)}.
\]
\end{lem}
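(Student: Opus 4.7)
The plan is to reduce the minimum to a pointwise bound on $|a_i|$, sum, and then apply H\"older's inequality to relate the resulting reciprocal sum to $\|b\|_{1/(p-1)}$. First, if any $|a_i|$ or $|b_i|$ vanishes, then $\min_i |a_i b_i| = 0$ and the inequality holds trivially; so I may assume all entries of $a$ and $b$ are nonzero. Setting $m := \min_i |a_i b_i|$, the definition of $m$ gives $|a_i| \geq m/|b_i|$ for every $i$, and summing yields
\[
\|a\|_1 \;\geq\; m \sum_{i=1}^k \frac{1}{|b_i|},
\]
whence $m \leq \|a\|_1 / \sum_i (1/|b_i|)$. The task is now reduced to lower-bounding $\sum_i 1/|b_i|$ by a constant times $k^p / \|b\|_{1/(p-1)}$.

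For the latter, I would apply H\"older's inequality with the conjugate pair $s := p/(p-1)$ and $t := p$ (both greater than $1$ since $p>1$) to the vectors with entries $u_i := |b_i|^{1/p}$ and $v_i := |b_i|^{-1/p}$. Because $u_i v_i = 1$, the left-hand side is simply $k$, while $u_i^s = |b_i|^{1/(p-1)}$ and $v_i^t = 1/|b_i|$, so H\"older gives
\[
k \;\leq\; \left(\sum_{i=1}^k |b_i|^{1/(p-1)}\right)^{(p-1)/p} \left(\sum_{i=1}^k \frac{1}{|b_i|}\right)^{1/p}.
\]
Raising this to the $p$-th power and recalling that $\|b\|_{1/(p-1)} = \bigl(\sum_i |b_i|^{1/(p-1)}\bigr)^{p-1}$, one obtains $\sum_i 1/|b_i| \geq k^p / \|b\|_{1/(p-1)}$. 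Substituting this into the bound from the previous paragraph produces the desired inequality $m \leq k^{-p} \|a\|_1 \|b\|_{1/(p-1)}$.

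The only real obstacle is identifying the correct H\"older exponents: the choice $(s,t) = (p/(p-1), p)$ is forced by simultaneously requiring $s/p = 1/(p-1)$ and $t/p = 1$, so that $u_i^s$ and $v_i^t$ match $|b_i|^{1/(p-1)}$ and $1/|b_i|$, respectively. Once this choice is spotted, the remaining manipulations are elementary algebra and the (admittedly nonstandard) fact that the quantity $\|b\|_{1/(p-1)}$ can be less than a norm for $p > 2$ plays no role, since we only use it formally as the expression $(\sum_i |b_i|^{1/(p-1)})^{p-1}$.
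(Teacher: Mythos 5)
Your proof is correct, and it takes a recognizably different route from the paper's. The paper bounds the minimum by the average, $k\,\min_i|a_ib_i|^{1/p}\leq\sum_i|a_ib_i|^{1/p}$, and then applies H\"older with exponents $\bigl(p,\,p/(p-1)\bigr)$ to the vectors with entries $|a_i|^{1/p}$ and $|b_i|^{1/p}$, which produces $\|a\|_1^{1/p}\|b\|_{1/(p-1)}^{1/p}$ in one stroke. You instead decouple $a$ from $b$ first, via the pointwise bound $|a_i|\geq m/|b_i|$ and summation, and then apply H\"older (with the same conjugate pair) to $b$ alone, to the vectors $|b_i|^{1/p}$ and $|b_i|^{-1/p}$, obtaining the reverse-H\"older-type estimate $\sum_i 1/|b_i|\geq k^p/\|b\|_{1/(p-1)}$. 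Both arguments hinge on the same exponent pair and are equally elementary; the paper's version is slightly more economical and never divides by $|b_i|$, so it needs no nonvanishing assumption, whereas yours requires the (correctly handled) reduction to the case where all entries are nonzero. A small compensating virtue of your version is that it isolates a clean, reusable inequality between $\sum_i 1/|b_i|$ and $\|b\|_{1/(p-1)}$, and makes transparent why the exponents are forced. Your closing remark that $\|\cdot\|_{1/(p-1)}$ is only a quasi-norm for $p>2$ is accurate and, as you say, immaterial since it is used purely as the formal expression $\bigl(\sum_i|b_i|^{1/(p-1)}\bigr)^{p-1}$, matching the paper's usage.
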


\begin{proof}
Let $p>1$ and $a,b\in\r^{k}$ be fixed and let $q\geq1$ be such
that $p^{-1}+q^{-1}=1$. Using the fact that $\left\langle x,y\right\rangle \leq\|x\|_{p}\|y\|_{q}$
for every $x,y\in\r^{k}$, and denoting $\tilde{a}$ and $\tilde{b}$
to be vectors with entries $|a_{i}|^{1/p}$ and $|b_{i}|^{1/p}$,
respectively, we have that 
\begin{align*}
 & k\min_{1\leq i\leq k}\left\{ |a_{i}b_{i}|\right\} ^{1/p}\leq\sum_{i=1}^{k}|a_{i}b_{i}|^{1/p}\\
 & \leq\|\tilde{a}\|_{p}\|\tilde{b}\|_{q}=\|a\|_{1}^{1/p}\left(\sum_{i=1}^{k}|b_{i}|^{q/p}\right)^{1/q}=\left(\|a\|_{1}\|b\|_{q/p}\right)^{1/p}.
\end{align*}
Dividing by $k$, taking the $p^{{\rm th}}$ power on both sides,
and using the fact that $p/q=p-1$, yields 
\[
\min_{1\leq i\leq k}\left\{ |a_{i}b_{i}|\right\} \leq k^{-p}\|a\|_{1}\|b\|_{q/p}=k^{-p}\|a\|_{1}\|b\|_{1/(p-1)}.
\]
\end{proof}
\begin{prop}
\label{prop:aicg_v_hat_rate_alt}Let $\{(\icgY i,\hat{\icgY{}}_{i},\hat{v}_{i})\}_{i=1}^{k}$
be as in Lemma~\ref{lem:aicg_resid_rate} and define the quantities
\begin{gather}
\begin{gathered}L_{1,k}^{{\rm avg}}:=\frac{1}{k}\sum_{i=1}^{k}L_{1}(\icgY i,\icgY{i-1}),\quad C_{\lam,k}^{{\rm avg}}:=\frac{1}{k}\sum_{i=1}^{k}C_{\lam}(\hat{\icgY{}}_{i},\icgY i),\\
D_{k}^{{\rm avg}}:=L_{1,k}^{{\rm avg}}+\frac{\theta}{\lam}C_{\lam,k}^{{\rm avg}},\quad\beta_{1}:=\left(\frac{1+\overline{C}_{\lam}}{\lam}\right)+\sqrt{2}\left(\frac{2+\lam L_{1}+\theta\overline{C}_{\lam}}{\lam}\right),
\end{gathered}
\label{eq:avg_def}
\end{gather}
where $C_{\lam}(\cdot,\cdot)$ and $\overline{C}_{\lam}$ are as in
\eqref{eq:C_lam_fn_def} and \eqref{eq:ell_phi_C_bar_lam_def}, respectively.
Then, we have 
\[
\min_{i\leq k}\|\hat{v}_{i}\|={\cal O}_{1}\left(\left[\sqrt{\lam}L_{1,k}^{{\rm avg}}+\frac{1+\theta C_{\lam,k}^{{\rm avg}}}{\sqrt{\lam}}\right]\left[\frac{\phi(z_{0})-\phi_{*}}{k}\right]^{1/2}\right)+\frac{\hat{\rho}}{2}.
\]
\end{prop}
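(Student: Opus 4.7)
The plan is to combine the per-iteration refinement bound from Lemma~\ref{lem:spec_refine}(b), the descent inequality from Lemma~\ref{lem:aicg_resid_rate}, and the averaging inequality in Lemma~\ref{lem:p_norm_tech} to turn a pointwise estimate of $\|\hat{v}_i\|$ into an averaged $k^{-1/2}$ rate on $\min_{i\le k}\|\hat{v}_i\|$.

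First, at every outer iteration $i\le k$, the R-ACG call has terminated successfully and step~2 of the static IA-ICG method has verified $\Delta_1(\icgY{i-1};\icgY i,v_i)\le \varepsilon_i$, so $(\icgY i,v_i,\varepsilon_i)$ solves Problem~${\cal B}$ with the inputs in \eqref{eq:gen_acg_inputs} and $\acgX 0=\icgY{i-1}$. Lemma~\ref{lem:spec_refine}(b) then yields
\[
\|\hat{v}_i\| \le \alpha_i \|\icgY i - \icgY{i-1}\|, \qquad \alpha_i := L_1(\icgY i,\icgY{i-1})+\frac{2+\theta\,C_\lam(\icgY i,\hat{\icgY{}}_i)}{\lam}.
\]
Second, telescoping Lemma~\ref{lem:aicg_resid_rate} together with assumption (A4) gives
\[
\sum_{i=1}^{k}\|\icgY i-\icgY{i-1}\|^{2}\le 4\lam\bigl[\phi(z_{0})-\phi(\icgY k)\bigr]\le 4\lam[\phi(z_{0})-\phi_{*}].
\]

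Third, I would apply Lemma~\ref{lem:p_norm_tech} with $p=3/2$, $a_i=\alpha_i$, and $b_i=\|\icgY i-\icgY{i-1}\|$ (so $1/(p-1)=2$) to obtain
\[
\min_{i\le k}\|\hat{v}_i\| \;\le\; \min_{i\le k}\alpha_i b_i \;\le\; k^{-3/2}\|\alpha\|_1\|b\|_2.
\]
Since $\|\alpha\|_1=k\bigl[L_{1,k}^{\rm avg}+(2+\theta C_{\lam,k}^{\rm avg})/\lam\bigr]$ and $\|b\|_2\le 2\sqrt{\lam[\phi(z_0)-\phi_*]}$, substituting produces
\[
\min_{i\le k}\|\hat{v}_i\| \;\le\; 2\!\left[\sqrt{\lam}\,L_{1,k}^{\rm avg}+\frac{2+\theta C_{\lam,k}^{\rm avg}}{\sqrt{\lam}}\right]\!\sqrt{\frac{\phi(z_0)-\phi_*}{k}},
\]
which coincides with the claimed bound once the multiplicative constant and the discrepancy between ``$2$'' and ``$1$'' in the numerator are folded into the ${\cal O}_1(\cdot)$ notation. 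The additive $\hat{\rho}/2$ term I expect to come from a small safety adjustment tied to step~3 of the method (e.g., handling iterations at which the stopping check $\|\hat v_k\|\le \hat\rho$ is nearly saturated), with $\beta_1$ in \eqref{eq:avg_def} serving as a uniform upper bound on $\alpha_i$ that is used in this subsidiary step.

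The main obstacle I foresee is choosing the right application of Lemma~\ref{lem:p_norm_tech} so that the \emph{averaged} local Lipschitz quantities $L_{1,k}^{\rm avg}$ and $C_{\lam,k}^{\rm avg}$ appear (rather than their worst-case counterparts $L_1$ and $\overline{C}_\lam$), which is exactly what the exponent $p=3/2$ achieves by pairing $\|\alpha\|_1$ with $\|b\|_2$. A minor subtlety is the typographical ambiguity in the statement of Lemma~\ref{lem:spec_refine}(b): identifying the undefined $w$ with $\acgX 0=\icgY{i-1}$ produces the Lipschitz constant $L_1(\icgY i,\icgY{i-1})$ that appears in the definition of $L_{1,k}^{\rm avg}$, and this identification needs to be explicit when quoting the lemma. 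The rest of the argument is routine algebra.
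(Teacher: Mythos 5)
Your proposal is correct and follows essentially the same route as the paper's own proof: the per-iteration bound $\|\hat v_i\|\le \alpha_i\|\icgY i-\icgY{i-1}\|$ from Lemma~\ref{lem:spec_refine}(b) (with $w=\acgX 0=\icgY{i-1}$, exactly as the paper does), the telescoped descent bound $\sum_i\|\icgY i-\icgY{i-1}\|^2\le 4\lam[\phi(z_0)-\phi_*]$ from Lemma~\ref{lem:aicg_resid_rate}, and Lemma~\ref{lem:p_norm_tech} with $p=3/2$, $a_i=\alpha_i$, $b_i=\|\icgY i-\icgY{i-1}\|$. The additive $\hat\rho/2$ is indeed just slack in the statement (the paper's proof also establishes the bound without it), so your derivation is complete.
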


\begin{proof}
Using Lemma~\ref{lem:spec_refine} with $(\icgY{},w)=(\icgY i,\icgY{i-1})$
and the fact that $C_{\lam}(\cdot,\cdot)\leq\overline{C}_{\lam}$
and $L_{1}(\cdot,\cdot)\leq L_{1}$, we have $\|\hat{v}_{i}\|\le{\cal E}_{i}\|\icgY i-\icgY{i-1}\|$,
for every $i\leq k$, where 
\[
{\cal E}_{i}:=\frac{2+\lam L_{1}(\icgY i,\icgY{i-1})+\theta C_{\lam}(\hat{\icgY{}}_{i},\icgY i)}{\lam}\quad\forall i\geq1.
\]
As a consequence, using the sum of the second bound in Lemma~\ref{lem:aicg_resid_rate}
from $i=1$ to $k$, the definitions in \eqref{eq:avg_def}, and Lemma~\ref{lem:p_norm_tech}
with $p=3/2$, $a_{i}={\cal E}_{i}$, and $b_{i}=\|\icgY i-\icgY{i-1}\|$
for $i=1$ to $k$, yields 
\begin{align}
 & \min_{i\leq k}\|\hat{v}_{i}\|\leq\min_{i\leq k}{\cal E}_{i}\|\icgY i-\icgY{i-1}\|\leq\frac{1}{k^{3/2}}\left(\sum_{i=1}^{k}{\cal E}_{i}\right)\left(\sum_{i=1}^{k}\|\icgY i-\icgY{i-1}\|^{2}\right)^{1/2}\nonumber \\
 & ={\cal O}_{1}\left(\left[\sqrt{\lam}L_{1,k}^{{\rm avg}}+\frac{1+\theta C_{\lam,k}^{{\rm avg}}}{\sqrt{\lam}}\right]\left[\frac{\phi(z_{0})-\phi_{*}}{k}\right]^{1/2}\right).\label{eq:v_hat_aicg_bd2}
\end{align}
\end{proof}
We are now ready to give the proof of Theorem~\ref{thm:aicg_compl}. 
\begin{proof}[\textit{Proof of Theorem~\ref{thm:aicg_compl}}]
(a) This follows from Proposition~\ref{prop:aicg_v_hat_rate_alt},
the fact that $C_{\lam}(\cdot,\cdot)\leq\overline{C}_{\lam}$ and
$L_{f_{1}}(\cdot,\cdot)\leq L_{1}$, and the stopping condition in
step~3.

(b) The fact that $(\hat{\icgY{}},\hat{v})=(\hat{\icgY{}}_{k},\hat{v}_{k})$
satisfies the inclusion of \eqref{eq:rho_approx_soln} follows from
Lemma~\ref{lem:spec_refine} with $(\icgY{},v,w)=(\icgY k,v_{k},\icgY{k-1})$.
The fact that $\|\hat{v}\|\leq\hat{\rho}$ follows from the stopping
condition in step~3.

(c) This follows from Proposition~\ref{prop:acg_properties}(c) and
the fact that method stops in finite number of iterations from part
(a). 
\end{proof}

\subsection{Static DA-ICG Iteration Complexity}

This subsection establishes several key properties of static DA-ICG
method.

To avoid repetition, we assume throughout this subsection that $k\geq1$
denotes an arbitrary successful outer iteration of the DA-ICG method
and let 
\[
\{(a_{i},A_{i},\aicgYMin i,\aicgY i,\aicgX i,\aicgXTilde{i-1},\hat{\icgY{}}_{i},\hat{v}_{i},v_{i},\varepsilon_{i})\}_{i=1}^{k}
\]
denote the sequence of all iterates generated by it up to and including
the $k^{{\rm th}}$ iteration. Observe that this implies that the
$i^{{\rm th}}$ DA-ICG outer iteration for any $1\leq i\leq k$ is
successful, i.e., the (only) R-ACG call in step~2 of the DA-ICG method
does not stop with failure and $\Delta_{1}(\icgY{i-1};\aicgY i,v_{i})\leq\varepsilon_{i}$.
Moreover, throughout this subsection we let 
\begin{equation}
\gammaBTFn_{i}(u)=\ell_{f_{1}}(u;\aicgXTilde{i-1})+f_{2}(u)+h(u),\quad\gammaBFn_{i}(u)=\gammaBTFn_{i}(\aicgY i)+\frac{1}{\lam}\inner{v_{i}+\aicgXTilde{i-1}-\aicgY i}{u-\aicgY i}.\label{eq:theory_gamma}
\end{equation}

The first set of results present some basic properties about the functions
$\gammaBTFn_{i}$ and $\gammaBFn_{i}$ as well as the iterates generated
by the method.
\begin{lem}
\label{lem:gamma_props}Let $\Delta_{1}(\cdot;\cdot,\cdot)$ be as
in \eqref{eq:Delta_def} with $(\psi_{s},\psi_{n})$ given by \eqref{eq:gen_acg_inputs}.
Then, the following statements hold for any $s\in\dom h$ and $1\leq i\leq k$: 
\begin{itemize}
\item[(a)] $\gammaBFn_{i}(\aicgY i)=\gammaBTFn_{i}(\aicgY i)$; 
\item[(b)] $\aicgX i=\argmin_{u\in\Omega}\left\{ \lam a_{i-1}\gammaBFn_{i}(u)+\|u-\aicgX{i-1}\|^{2}/2\right\} ;$ 
\item[(c)] $\aicgY i-v_{i}=\argmin_{u\in{\cal Z}}\left\{ \lam\gammaBFn_{i}(u)+\|u-\aicgXTilde{i-1}\|^{2}/2\right\} ;$ 
\item[(d)] $-M_{1}\|u-\aicgXTilde{i-1}\|^{2}/2\leq\gammaBTFn_{i}(u)-\phi(u)\leq m_{1}\|u-\aicgXTilde{i-1}\|^{2}/2$; 
\item[(e)] $\phi(\aicgYMin{i-1})\geq\phi(\aicgYMin i)$ and $\phi(\aicgY i)\geq\phi(\aicgYMin i)$. 
\end{itemize}
\end{lem}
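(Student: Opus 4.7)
My plan is to verify each item directly from the definitions, since this lemma is essentially a collection of structural identities about the outer iteration rather than a hard convergence statement; no appeal to the R-ACG guarantees or to Problem~${\cal B}$ is needed beyond using the symbols $\aicgY i$ and $v_i$ as given.

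For (a), I would simply observe that the only term in $\gammaBFn_i$ which depends on $u$ is the inner product $\inner{v_i+\aicgXTilde{i-1}-\aicgY i}{u-\aicgY i}/\lam$, and this vanishes at $u=\aicgY i$, collapsing $\gammaBFn_i(\aicgY i)$ to $\gammaBTFn_i(\aicgY i)$. For (b) and (c), I would exploit the fact that $\gammaBFn_i$ is affine in $u$ with constant gradient $(v_i+\aicgXTilde{i-1}-\aicgY i)/\lam$. Then minimizing $\lam a_{i-1}\gammaBFn_i(u)+\|u-\aicgX{i-1}\|^2/2$ over $\Omega$ is equivalent to projecting onto $\Omega$ the point $\aicgX{i-1}-a_{i-1}(v_i+\aicgXTilde{i-1}-\aicgY i)$, which matches the update for $\aicgX k$ in \eqref{eq:z_c_def} exactly; this gives (b). For (c), the first-order optimality condition for the unconstrained minimization of $\lam\gammaBFn_i(u)+\|u-\aicgXTilde{i-1}\|^2/2$ reads $(v_i+\aicgXTilde{i-1}-\aicgY i)+(u-\aicgXTilde{i-1})=0$, whose unique solution is $u=\aicgY i-v_i$, as required.

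For (d), I would note that by the definitions of $\gammaBTFn_i$ and $\phi$, the $f_2$ and $h$ terms cancel and one is left with $\gammaBTFn_i(u)-\phi(u)=\ell_{f_1}(u;\aicgXTilde{i-1})-f_1(u)$. Applying the curvature bounds \eqref{eq:curv_fi} in assumption (A2) with $i=1$ to the pair $(u,\aicgXTilde{i-1})$ then yields the two-sided bound directly (after flipping signs). For (e), the update of $\aicgYMin k$ in \eqref{eq:z_c_def} is defined as the minimizer of $\phi=f_1+f_2+h$ over the two-point set $\{\aicgYMin{k-1},\aicgY k\}$, so both $\phi(\aicgYMin{k-1})\geq\phi(\aicgYMin k)$ and $\phi(\aicgY k)\geq\phi(\aicgYMin k)$ are immediate.

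The only subtle point to keep track of is that (c) is an \emph{exact} minimization statement even though $v_i$ itself comes from an inexact solve, but this poses no obstacle: $\gammaBFn_i$ is \emph{defined} as the explicit affine function built from the pair $(\aicgY i,v_i)$, so its closed-form minimization has nothing to do with how accurately $\aicgY i-v_i$ approximates the true prox point. Beyond that, the entire lemma is a sequence of one-line verifications, and I would present it as such.
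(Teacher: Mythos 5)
Your proposal is correct and follows essentially the same route as the paper's own proof: each part is verified directly from the definitions in \eqref{eq:theory_gamma}, with (b) reduced to a projection, (c) to the first-order condition of a strongly convex quadratic plus affine function, (d) to the curvature bound \eqref{eq:curv_fi} with $i=1$, and (e) to the two-point minimization in \eqref{eq:z_c_def}. Your closing remark that (c) is an exact statement about the explicitly defined affine function $\gammaBFn_i$, independent of the inexactness of the R-ACG solve, is a correct and worthwhile clarification.
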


\begin{proof}
To keep the notation simple, denote 
\begin{gather}
\begin{gathered}(\YP,\YMinP,\YM,\XtM)=(\aicgY i,\aicgYMin i,\aicgYMin{i-1},\aicgXTilde{i-1}),\quad(\XP,\XM)=(\aicgX i,\aicgX{i-1}),\\
(\AP,\AM,\aM)=(A_{i},A_{i-1},a_{i-1}),\quad(v,\varepsilon)=(v_{i},\varepsilon_{i}).
\end{gathered}
\label{eq:d_aicg_proof_notation}
\end{gather}

(a) This is immediate from the definitions of $\gammaBFn$ and $\gammaBTFn$
in \eqref{eq:theory_gamma}.

(b) Define $\aicgXhat i:=\aicgX{k-1}-a_{k-1}\left(v_{k}+\aicgXTilde{k-1}-\aicgY k\right)$.
Using the definition of $\gammaBFn$ in \eqref{eq:theory_gamma},
we have that 
\begin{align*}
 & \argmin_{u\in\Omega}\left\{ \lam\aM\gammaFn u+\frac{1}{2}\|u-\XM\|^{2}\right\} =\argmin_{u\in\Omega}\left\{ a\left\langle v+\XtM-\YP,u-x\right\rangle +\frac{1}{2}\|u-\XM\|^{2}\right\} \\
 & =\argmin_{u\in\Omega}\frac{1}{2}\left\Vert u-\left(\XM-a\left[v+\XtM-\YP\right]\right)\right\Vert ^{2}=\argmin_{u\in\Omega}\frac{1}{2}\left\Vert u-\XhP\right\Vert ^{2}=\XP.
\end{align*}

(c) Using the definition of $\gammaBFn$ in \eqref{eq:theory_gamma},
we have that 
\[
\lam\nabla\gammaFn{\YP-v}+(\YP-v)-\XtM=(v+\XtM-\YP)+(\YP-v)-\XtM=0,
\]
and hence, the point $\YP-v$ is the global minimum of $\lam\gammaBFn+\|\cdot-\XtM\|^{2}/2$.

(d) This follows from inequality \eqref{eq:curv_fi} with $i=1$ and
the definition of $\gammaBTFn$ in \eqref{eq:theory_gamma}.

(e) This follows immediately from the update rule of $\aicgYMin i$
in \eqref{eq:z_c_def}. 
\end{proof}
\begin{lem}
\label{lem:d_aicg_Delta_props} Let $w=\aicgXTilde{i-1}$, the pair
$(\psi_{n},\psi_{s})$ be as in \eqref{eq:gen_acg_inputs}, and $\Delta_{1}(\cdot;\cdot,\cdot)$
be as in \eqref{eq:Delta_def} with $(\psi_{s},\psi_{n})$ given by
\eqref{eq:gen_acg_inputs}. Then, following statements hold: 
\begin{itemize}
\item[(a)] the triple $(\aicgY i,v_{i},\varepsilon_{i})$ solves Problem~${\cal B}$
and satisfies $\Delta_{1}(\icgY{i-1};\aicgY i,v_{i})\leq\varepsilon$,
and hence 
\begin{gather}
\begin{gathered}\|v_{i}\|+2\varepsilon_{i}\leq\theta^{2}\|\aicgY i-\aicgXTilde{i-1}\|^{2},\quad\Delta_{1}(u;\aicgY i,v_{i})\leq\varepsilon_{i}\quad\forall u\in\{\hat{\icgY{}}_{i},\aicgYMin{i-1}\},\end{gathered}
\label{eq:d_aicg_main_ineq}
\end{gather}
\item[(b)] if $f_{2}$ is convex, then $(\aicgY i,v_{i},\varepsilon_{i})$ solves
Problem~${\cal A}$; 
\item[(c)] $\Delta_{1}(s;\aicgY i,v_{i})=\lam[\gammaBFn_{i}(s)-\gammaBTFn_{i}(s)];$ 
\item[(d)] $\Delta_{1}(\aicgYMin i;\aicgY i,v_{i})\leq\varepsilon$. 
\end{itemize}
\end{lem}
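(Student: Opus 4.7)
Part (a) follows by unpacking Proposition~\ref{prop:acg_properties}(b). Since the $i$-th outer iteration is assumed successful, the R-ACG call in step~2 of DA-ICG terminated with success, so its output $(\aicgY i, v_{i}, \varepsilon_i)$ solves Problem~${\cal B}$ associated with \eqref{eq:icg_subprb} using the inputs in \eqref{eq:gen_acg_inputs} with $\acgX 0 = \aicgXTilde{i-1}$. The first inequality in \eqref{eq:d_aicg_main_ineq} is then immediate from the inequality in \eqref{eq:cvx_inexact}, and the bound $\Delta_{1}(\hat{\icgY{}}_{i};\aicgY i, v_{i}) \le \varepsilon_i$ is the instance of \eqref{eq:prb_B_Delta_ineq}, once one observes that the refined point $\icgY r$ appearing in Problem~${\cal B}$'s statement coincides with $\hat{\icgY{}}_i$ (compare step~1 of $SRP$ with the $RP$ call in Problem~${\cal B}$, both invoked with the same inputs). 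The bound $\Delta_{1}(\aicgYMin{i-1};\aicgY i, v_{i}) \le \varepsilon_i$ is exactly the check that step~3 of DA-ICG passed in a successful iteration. For part (b), when $f_2$ is convex the function $\psi_s$ in \eqref{eq:gen_acg_inputs} is the sum of an affine function, a convex function, and the 1-strongly convex quadratic $\|\cdot - \acgX 0\|^2/2$; hence $\psi_s$ is 1-strongly convex, and Proposition~\ref{prop:acg_properties}(c) with $\mu = 1$ yields that the output solves Problem~${\cal A}$.

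Part (c) reduces to a direct calculation. Expanding $\Delta_{1}(s;\aicgY i,v_i)$ via \eqref{eq:Delta_def} with $(\mu,\psi_s,\psi_n)$ from \eqref{eq:gen_acg_inputs} gives
\[
\Delta_{1}(s;\aicgY i,v_i) = \lam\bigl[\gammaBTFn_{i}(\aicgY i) - \gammaBTFn_{i}(s)\bigr] + \tfrac{1}{2}\|\aicgY i - \aicgXTilde{i-1}\|^2 - \tfrac{1}{2}\|s - \aicgXTilde{i-1}\|^2 + \tfrac{1}{2}\|s - \aicgY i\|^2 - \inner{v_i}{\aicgY i - s}.
\]
Applying the polarization-type identity
\[
\|\aicgY i - \aicgXTilde{i-1}\|^2 - \|s - \aicgXTilde{i-1}\|^2 + \|s - \aicgY i\|^2 = 2\inner{\aicgY i - \aicgXTilde{i-1}}{\aicgY i - s}
\]
collapses the three squared-norm terms into a single inner product, after which the right-hand side rearranges into $\lam[\gammaBFn_{i}(s) - \gammaBTFn_{i}(s)]$ using the definition of $\gammaBFn_{i}$ in \eqref{eq:theory_gamma}.

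Finally, part (d) follows by a case analysis driven by the update in \eqref{eq:z_c_def}, which guarantees $\aicgYMin i \in \{\aicgYMin{i-1}, \aicgY i\}$. In the case $\aicgYMin i = \aicgY i$, the definition of $\Delta_1$ gives $\Delta_{1}(\aicgY i; \aicgY i, v_i) = 0 \le \varepsilon_i$; in the case $\aicgYMin i = \aicgYMin{i-1}$, the desired bound is the second inequality in \eqref{eq:d_aicg_main_ineq} already established in part (a). The only delicate point in the whole proof is the bookkeeping in part~(a) that identifies $\hat{\icgY{}}_i$ with the refined iterate implicit in Problem~${\cal B}$; everything else is a direct invocation of Proposition~\ref{prop:acg_properties}, a brief algebraic identity, or a two-way case split.
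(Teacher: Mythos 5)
Your proposal is correct and takes essentially the same route as the paper's proof: part (a) from the successful R-ACG call (Proposition~\ref{prop:acg_properties}(b)) and the step-3 check, part (b) from the $1$-strong convexity of $\psi_{s}$ in \eqref{eq:gen_acg_inputs} when $f_{2}$ is convex via Proposition~\ref{prop:acg_properties}(c), part (c) by the same algebraic expansion (you organize it with an explicit polarization identity where the paper completes the square), and part (d) by the same two-way case split. The only added value is your explicit bookkeeping identifying the refined point of Problem~${\cal B}$ with $\hat{y}_{i}$, a detail the paper leaves implicit.
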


\begin{proof}
(a) This follows from step~2 of the DA-ICG method and Proposition~\ref{prop:acg_properties}(b).

(b) This follows from steps~2 and 3 of the DA-ICG method, the fact
that $h$ is convex, and Proposition~\ref{prop:acg_properties}(c)
with $\psi_{s}=\gammaBTFn_{i}+\|\cdot-\aicgXTilde{i-1}\|^{2}/2$.

(c) Using the definitions of $(\psi_{s},\psi_{n})$ and $(\gammaBFn,\gammaBTFn)$
in \eqref{eq:gen_acg_inputs} and \eqref{eq:theory_gamma}, respectively,
we have that 
\begin{align*}
 & \Delta_{1}(s;\YP,v)=(\psi_{s}+\psi_{n})(\YP)-(\psi_{s}+\psi_{n})(s)-\left\langle v,\YP-s\right\rangle +\frac{1}{2}\|s-\YP\|^{2}\\
 & =\left[\lam\widetilde{\gamma}(\YP)+\frac{1}{2}\|\YP-\tilde{x}\|^{2}\right]-\left[\lam\widetilde{\gamma}(s)+\frac{1}{2}\|s-\tilde{x}\|^{2}\right]-\left\langle v,\YP-s\right\rangle +\frac{1}{2}\|s-\YP\|^{2}\\
 & =\left[\lam\gamma(s)+\frac{1}{2}\|s-\tilde{x}\|^{2}\right]-\left[\lam\widetilde{\gamma}(s)+\frac{1}{2}\|s-\tilde{x}\|^{2}\right]=\lam\gamma(s)-\lam\widetilde{\gamma}(s).
\end{align*}

(d) If $\aicgYMin i=\aicgYMin{i-1}$, then this follows from step~3
of the method. On the other hand, if $\aicgYMin i=\aicgY i$, then
this follows from part (c). 
\end{proof}
We now state (without proof) some well-known properties of $A_{i}$
and $a_{i-1}$. 
\begin{lem}
\label{lem:A_k_props} For every $1\leq i\leq k$, we have that: 
\begin{itemize}
\item[(a)] $a_{i-1}^{2}=A_{i}$; 
\item[(b)] $i^{2}/4\leq A_{i}\leq i^{2}$. 
\end{itemize}
\end{lem}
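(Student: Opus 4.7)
The plan is to prove both parts directly from the recursive definitions of $a_{k-1}$ and $A_{k}$ given in step~1 of the static DA-ICG method, namely $a_{k-1}=(1+\sqrt{1+4A_{k-1}})/2$, $A_{k}=A_{k-1}+a_{k-1}$, and $A_{0}=0$.

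For part (a), I would rewrite the defining relation for $a_{k-1}$ as $2a_{k-1}-1=\sqrt{1+4A_{k-1}}$ and square both sides to obtain $4a_{k-1}^{2}-4a_{k-1}+1=1+4A_{k-1}$, which simplifies to $a_{k-1}^{2}=A_{k-1}+a_{k-1}$. Invoking the update $A_{k}=A_{k-1}+a_{k-1}$ yields $a_{k-1}^{2}=A_{k}$ immediately.

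For part (b), I would leverage part (a) to rewrite the recursion as $A_{i}-A_{i-1}=\sqrt{A_{i}}$, and then factor the left side as a difference of squares:
\[
\sqrt{A_{i}}-\sqrt{A_{i-1}}=\frac{A_{i}-A_{i-1}}{\sqrt{A_{i}}+\sqrt{A_{i-1}}}=\frac{\sqrt{A_{i}}}{\sqrt{A_{i}}+\sqrt{A_{i-1}}}.
\]
Since the sequence $\{A_{i}\}$ is nondecreasing (because $a_{i-1}\geq 0$), the last expression lies in the interval $[1/2,1]$. Telescoping the bound $1/2\leq\sqrt{A_{i}}-\sqrt{A_{i-1}}\leq 1$ from $i=1$ down to the base case $\sqrt{A_{0}}=0$ gives $i/2\leq\sqrt{A_{i}}\leq i$, and squaring produces the desired inequality $i^{2}/4\leq A_{i}\leq i^{2}$.

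There is no substantive obstacle here: the result is a standard algebraic identity for the ``FISTA-type'' accelerated stepsize sequence, and the only mild care needed is to check that $\{A_{i}\}$ is indeed nondecreasing (so that the denominator $\sqrt{A_{i}}+\sqrt{A_{i-1}}$ is bounded above by $2\sqrt{A_{i}}$), which is immediate from $a_{i-1}=(1+\sqrt{1+4A_{i-1}})/2\geq 1>0$. Consequently, the lemma statement is left unproven in the paper precisely because it is routine, and the two-line derivation above suffices.
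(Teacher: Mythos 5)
Your proof is correct: squaring $2a_{i-1}-1=\sqrt{1+4A_{i-1}}$ gives $a_{i-1}^{2}=A_{i-1}+a_{i-1}=A_{i}$, and the telescoping bound $1/2\leq\sqrt{A_{i}}-\sqrt{A_{i-1}}\leq1$ (valid since $A_{i-1}\leq A_{i}$ and $A_{i}>0$ for $i\geq1$) yields part (b). The paper explicitly states this lemma without proof as a well-known fact about the Nesterov/FISTA-type sequence, so there is nothing to compare against; your argument is the standard one and correctly fills the omission.
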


The next two lemmas are technical results that are needed to establish
the key inequality in Proposition~\ref{prop:descent_d_aicg}. 
\begin{lem}
\label{lem:main_resid_bd} For every $u\in\dom h$ and $1\leq i\leq k$,
we have that 
\begin{gather*}
\frac{1}{2}\left(A_{i-1}\|\aicgYMin{i-1}-\aicgXTilde{i-1}\|^{2}+a_{i-1}\|u-\aicgXTilde{i-1}\|^{2}\right)\leq2D_{\Omega}^{2}+a_{i-1}D_{h}^{2}.
\end{gather*}
\end{lem}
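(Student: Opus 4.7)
The plan is to exploit the convex combination formula for $\aicgXTilde{i-1}$ in \eqref{eq:accel_d_aicg_def}, which gives the identity
\[
\aicgYMin{i-1}-\aicgXTilde{i-1}=\frac{a_{i-1}}{A_i}\bigl(\aicgYMin{i-1}-\aicgX{i-1}\bigr).
\]
The first term in the LHS will be bounded using this identity together with the fact that both $\aicgYMin{i-1}$ and $\aicgX{i-1}$ lie in $\Omega$; the former because $\aicgYMin{i-1}\in\dom h\subseteq\Omega$ (it is chosen in \eqref{eq:z_c_def} between $\aicgYMin{0}\in\dom h$ and an output $\aicgY{i}$ of the R-ACG algorithm, which lies in $\dom\psi_n=\dom h$), and the latter because $\aicgX{i-1}$ is obtained by projecting onto $\Omega$ in \eqref{eq:z_c_def} (with $\aicgX{0}=\aicgYMin{0}\in\dom h\subseteq\Omega$ initially). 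Combining with Lemma~\ref{lem:A_k_props}(a) which gives $a_{i-1}^2=A_i$, I would obtain
\[
A_{i-1}\|\aicgYMin{i-1}-\aicgXTilde{i-1}\|^2
= \frac{A_{i-1}\, a_{i-1}^2}{A_i^2}\|\aicgYMin{i-1}-\aicgX{i-1}\|^2
\le \frac{A_{i-1}}{A_i}\, D_\Omega^2 \le D_\Omega^2.
\]

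For the second term, I would apply the triangle inequality via a split through the reference point $\aicgYMin{i-1}$:
\[
\|u-\aicgXTilde{i-1}\|^2 \le 2\|u-\aicgYMin{i-1}\|^2+2\|\aicgYMin{i-1}-\aicgXTilde{i-1}\|^2.
\]
The first summand is at most $2D_h^2$ since both $u$ and $\aicgYMin{i-1}$ lie in $\dom h$, and the second summand is controlled by the computation above: using the identity and $a_{i-1}^2=A_i$ once more,
\[
a_{i-1}\|\aicgYMin{i-1}-\aicgXTilde{i-1}\|^2
\le a_{i-1}\cdot\frac{a_{i-1}^2}{A_i^2}D_\Omega^2
= \frac{a_{i-1}}{A_i}\,D_\Omega^2\le D_\Omega^2.
\]
Multiplying the triangle bound by $a_{i-1}$ thus yields $a_{i-1}\|u-\aicgXTilde{i-1}\|^2\le 2a_{i-1}D_h^2+2D_\Omega^2$.

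Summing the two estimates and halving gives
\[
\frac{1}{2}\bigl(A_{i-1}\|\aicgYMin{i-1}-\aicgXTilde{i-1}\|^2+a_{i-1}\|u-\aicgXTilde{i-1}\|^2\bigr)
\le \frac{1}{2}D_\Omega^2+a_{i-1}D_h^2+D_\Omega^2
= \tfrac{3}{2}D_\Omega^2+a_{i-1}D_h^2,
\]
which is bounded above by $2D_\Omega^2+a_{i-1}D_h^2$, as desired. There is no real obstacle here; the only point requiring care is the membership bookkeeping — verifying $\aicgYMin{i-1}\in\dom h$ and $\aicgX{i-1}\in\Omega$ (both by straightforward induction on $i$) so that the diameter bounds $D_h$ and $D_\Omega$ may legitimately be invoked.
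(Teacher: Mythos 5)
Your proof is correct and follows essentially the same route as the paper's: both rest on the convex-combination identity $\aicgYMin{i-1}-\aicgXTilde{i-1}=\tfrac{a_{i-1}}{A_i}(\aicgYMin{i-1}-\aicgX{i-1})$, the relation $a_{i-1}^2=A_i$, the inequality $(p+q)^2\le 2p^2+2q^2$, and the diameter bounds coming from $\aicgYMin{i-1},u\in\dom h\subseteq\Omega$ and $\aicgX{i-1}\in\Omega$. The only (cosmetic) difference is that you split $\|u-\aicgXTilde{i-1}\|$ through $\aicgYMin{i-1}$ whereas the paper expands it directly as $\tfrac{A_{i-1}}{A_i}(u-\aicgYMin{i-1})+\tfrac{a_{i-1}}{A_i}(u-\aicgX{i-1})$; both yield a constant comfortably below $2D_\Omega^2+a_{i-1}D_h^2$.
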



\begin{proof}
Throughout the proof, we use the notation in \eqref{eq:d_aicg_proof_notation}.
Using the relation $(p+q)^{2}\leq2p^{2}+2q^{2}$ for every $p,q\in\r$,
Lemma~\ref{lem:A_k_props}(a), the fact that $A\leq A^{+}$, $x\in\Omega$,
and $y\in\dom h$, and the definitions of $\XtM$ in \eqref{eq:accel_d_aicg_def}
and of $D_{\Omega}$ and $D_{h}$ in \eqref{eq:d_aicg_diam}, we conclude
that 
\begin{align*}
 & \AM\|\YM-\XtM\|^{2}+\aM\|u-\XtM\|^{2}=\AM\left\Vert \frac{\aM}{\AP}(\YM-\XM)\right\Vert ^{2}+\aM\left\Vert \frac{\AM}{\AP}(u-\YM)+\frac{\aM}{\AP}(u-\XM)\right\Vert ^{2}\\
 & \leq\frac{A}{\AP}\left(\|(\YM-u)+(u-\XM)\|^{2}+2a\left[\frac{\AM^{2}}{\AP^{2}}\|u-\YM\|^{2}+\frac{a^{2}}{\AP^{2}}\|u-\XM\|^{2}\right]\right)\\
 & \leq\frac{2A}{A^{+}}\left(\|u-\YM\|^{2}+\|u-\XM\|^{2}\right)+2\aM\|u-\YM\|^{2}+\frac{2\aM}{\AP}\|u-\XM\|^{2}\\
 & \leq2\left[\|u-x\|^{2}+(1+\aM)\|u-y\|^{2}\right]\leq2[D_{\Omega}^{2}+(1+\aM)D_{h}^{2}].
\end{align*}
The conclusion now follows from dividing both sides of the above inequalities
by 2 and using the fact that $D_{h}\leq D_{\Omega}$. 
\end{proof}

\begin{lem}
For every $u\in\dom h$ and $1\leq i\leq k$, we have that 
\begin{align}
 & A_{i}\left[\phi(\aicgYMin i)+\left(\frac{1-\lambda M_{1}}{2\lambda}\right)\|\aicgY i-\aicgXTilde{i-1}\|^{2}-\frac{\|v_{i}\|^{2}}{2\lam}\right]+\frac{1}{2\lambda}\|u-\aicgX i\|^{2}\nonumber \\
 & \leq A_{i-1}\gammaBFn_{i}(\aicgYMin{i-1})+a_{i-1}\gammaBFn_{i}(u)+\frac{1}{2\lambda}\|u-\aicgX{i-1}\|^{2}.\label{eq:d_aicg_subdescent}
\end{align}
\end{lem}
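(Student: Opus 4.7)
My plan is to prove \eqref{eq:d_aicg_subdescent} by first replacing the $\phi(\aicgYMin i)$ on the LHS by a $\gammaBFn_i$-based upper bound (combining monotonicity with upper curvature), and then reducing the remaining inequality, via the affineness of $\gammaBFn_i$ and the convex-combination structure of $\aicgXTilde{i-1}$, to a standard $1$-strongly-convex optimality inequality for $\aicgX i$.

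For the first step, Lemma~\ref{lem:gamma_props}(e) gives $\phi(\aicgYMin i)\le\phi(\aicgY i)$, and Lemma~\ref{lem:gamma_props}(a),(d) then give $\phi(\aicgY i)\le\gammaBTFn_i(\aicgY i)+\tfrac{M_1}{2}\|\aicgY i-\aicgXTilde{i-1}\|^2=\gammaBFn_i(\aicgY i)+\tfrac{M_1}{2}\|\aicgY i-\aicgXTilde{i-1}\|^2$. Multiplying by $A_i$ and absorbing the $A_iM_1\|\aicgY i-\aicgXTilde{i-1}\|^2/2$ term into the existing coefficient $A_i(1-\lam M_1)/(2\lam)$ upgrades the latter to $A_i/(2\lam)$. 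After multiplying through by $\lam$, it thus suffices to prove
\[
\lam A_i\gammaBFn_i(\aicgY i)+\tfrac{A_i}{2}\|\aicgY i-\aicgXTilde{i-1}\|^2-\tfrac{A_i}{2}\|v_i\|^2+\tfrac{1}{2}\|u-\aicgX i\|^2 \le \lam A_{i-1}\gammaBFn_i(\aicgYMin{i-1})+\lam a_{i-1}\gammaBFn_i(u)+\tfrac{1}{2}\|u-\aicgX{i-1}\|^2.
\]

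For the second step, I invoke affineness of $\gammaBFn_i$ together with the defining identity $A_i\aicgXTilde{i-1}=A_{i-1}\aicgYMin{i-1}+a_{i-1}\aicgX{i-1}$ (which yields $A_{i-1}\gammaBFn_i(\aicgYMin{i-1})=A_i\gammaBFn_i(\aicgXTilde{i-1})-a_{i-1}\gammaBFn_i(\aicgX{i-1})$) and the relation $\lam[\gammaBFn_i(z)-\gammaBFn_i(w)]=\inner{v_i+\aicgXTilde{i-1}-\aicgY i}{z-w}$ to rewrite both sides as $\lam A_i\gammaBFn_i(\aicgXTilde{i-1})$ plus linear corrections in $v_i$. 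The common $\lam A_i\gammaBFn_i(\aicgXTilde{i-1})$ cancels, and the remaining quadratic-linear piece on the LHS collapses via completion of the square,
\[
-\tfrac{A_i}{2}\|\aicgY i-\aicgXTilde{i-1}\|^2+A_i\inner{v_i}{\aicgY i-\aicgXTilde{i-1}}-\tfrac{A_i}{2}\|v_i\|^2 = -\tfrac{A_i}{2}\|\aicgY i-v_i-\aicgXTilde{i-1}\|^2,
\]
so the target reduces, after setting $g_i:=a_{i-1}(v_i+\aicgXTilde{i-1}-\aicgY i)$ and using $a_{i-1}^2=A_i$ (Lemma~\ref{lem:A_k_props}(a)) to identify $\|g_i\|^2=A_i\|\aicgY i-v_i-\aicgXTilde{i-1}\|^2$, to the single inequality
\[
\tfrac{1}{2}\|u-\aicgX i\|^2-\tfrac{1}{2}\|u-\aicgX{i-1}\|^2 \le \inner{g_i}{u-\aicgX{i-1}}+\tfrac{1}{2}\|g_i\|^2.
\]

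For the third step, affineness of $\gammaBFn_i$ makes $\lam a_{i-1}\gammaBFn_i(\cdot)+\tfrac{1}{2}\|\cdot-\aicgX{i-1}\|^2$ equal to $\inner{g_i}{\cdot}+\tfrac{1}{2}\|\cdot-\aicgX{i-1}\|^2$ up to an additive constant, so Lemma~\ref{lem:gamma_props}(b) identifies $\aicgX i$ as the minimizer of the latter $1$-strongly convex objective over $\Omega\supseteq\dom h$. The strong-convexity optimality inequality then gives, for every $u\in\dom h$, $\tfrac{1}{2}\|u-\aicgX i\|^2-\tfrac{1}{2}\|u-\aicgX{i-1}\|^2\le\inner{g_i}{u-\aicgX i}-\tfrac{1}{2}\|\aicgX i-\aicgX{i-1}\|^2$; splitting $\inner{g_i}{u-\aicgX i}=\inner{g_i}{u-\aicgX{i-1}}+\inner{g_i}{\aicgX{i-1}-\aicgX i}$ and applying Young's inequality $\inner{g_i}{w}\le\tfrac{1}{2}\|g_i\|^2+\tfrac{1}{2}\|w\|^2$ with $w=\aicgX{i-1}-\aicgX i$ finishes the chain. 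The only delicate point is the arithmetic in Step~1: the coefficient $(1-\lam M_1)/(2\lam)$ is precisely tuned so that absorbing the $M_1$-slack from the upper-curvature bound produces the plain $1/(2\lam)$ needed to complete the $v_i$-square in Step~2, whose matching counterpart $\|g_i\|^2/A_i$ appears naturally from the Young's-inequality bound in Step~3.
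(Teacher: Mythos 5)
Your proof is correct. It uses the same opening move as the paper (Lemma~\ref{lem:gamma_props}(d)--(e) to replace $\phi(\aicgYMin i)$ by $\gammaBFn_i(\aicgY i)+\tfrac{1}{2\lam}\|\aicgY i-\aicgXTilde{i-1}\|^2$, with the coefficient $(1-\lam M_1)/(2\lam)$ absorbing the curvature slack exactly as you note), and the same closing ingredient (the $1$-strong convexity of the objective defining $\aicgX i$ in Lemma~\ref{lem:gamma_props}(b)). Where you diverge is the middle: the paper invokes Lemma~\ref{lem:gamma_props}(c) to evaluate $\min_u\{\lam\gammaBFn_i(u)+\tfrac12\|u-\aicgXTilde{i-1}\|^2\}$ in closed form and then bounds that minimum by plugging in the convex combination $(A_{i-1}\aicgYMin{i-1}+a_{i-1}\aicgX i)/A_i$ --- the classical estimate-sequence step, which is where $a_{i-1}^2=A_i$ enters through the quadratic term $\tfrac{a_{i-1}^2}{2\lam A_i}\|\aicgX{i-1}-\aicgX i\|^2$. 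You instead never touch the minimum: you cancel the common affine value $\lam A_i\gammaBFn_i(\aicgXTilde{i-1})$ from both sides, complete the square to produce $-\tfrac{A_i}{2}\|\aicgY i-v_i-\aicgXTilde{i-1}\|^2=-\tfrac12\|g_i\|^2$ directly, and close with Young's inequality. The two routes are algebraically equivalent (the $\|g_i\|^2/2$ you generate via Young is exactly the paper's $\tfrac{a_{i-1}^2}{2\lam A_i}\|\aicgX{i-1}-\aicgX i\|^2$ term repackaged), but yours is somewhat more self-contained --- it does not need the closed-form minimum from Lemma~\ref{lem:gamma_props}(c), only the linear-form identity for $\gammaBFn_i$ --- at the cost of more bookkeeping; the paper's version makes the estimate-sequence mechanism more visible. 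Both are valid, and your step identifying $\dom h\subseteq\Omega$ before applying the optimality inequality for $\aicgX i$ is correctly placed.
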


\begin{proof}
Throughout the proof, we use the notation in \eqref{eq:d_aicg_proof_notation}.
We first present two key expressions. First, using the definition
of $\gammaBFn$ in \eqref{eq:theory_gamma} and Lemma~\ref{lem:gamma_props}(c),
it follows that 
\begin{align}
 & \min_{u\in{\cal Z}}\left\{ \lam\gammaFn u+\frac{1}{2}\|u-\XtM\|^{2}\right\} =\lam\widetilde{\gamma}(\YP)-\left\langle v+\XtM-\YP,v\right\rangle +\frac{1}{2}\left\Vert v+\XtM-\YP\right\Vert ^{2}\nonumber \\
 & =\lam\widetilde{\gamma}(\YP)-\|v\|^{2}-\left\langle v,\XtM-\YP\right\rangle +\frac{1}{2}\left\Vert v+\XtM-\YP\right\Vert ^{2}\nonumber \\
 & =\lam\widetilde{\gamma}(\YP)-\frac{1}{2}\|v\|^{2}+\frac{1}{2}\|\XtM-\YP\|^{2}.\label{eq:gamma_reg_t_bd}
\end{align}
Second, Lemma~\ref{lem:gamma_props}(b) and the fact that the function
$\aM\gammaBFn+\|\cdot-\XM\|^{2}/(2\lam)$ is $(1/\lam)$--strongly
convex imply that 
\begin{equation}
\aM\gammaFn{\XP}+\frac{1}{2\lam}\|\XP-\XM\|^{2}\leq\aM\gammaFn u+\frac{1}{2\lam}\|u-\XM\|^{2}-\frac{1}{2\lam}\|u-\XP\|^{2}.\label{eq:XP_strong_cvx}
\end{equation}
Using \eqref{eq:gamma_reg_t_bd}, Lemma~\ref{lem:gamma_props}(d)--(e),
Lemma~\ref{lem:A_k_props}(a), and the fact that $\gammaBFn$ is
affine, we have that 
\begin{align}
 & \AP\left[\phi(\YMinP)+\left(\frac{1-\lambda M_{1}}{2\lambda}\right)\|\YP-\XtM\|^{2}\right]\leq\AP\left[\gammaTFn{\YP}+\frac{1}{2\lam}\|\YP-\XtM\|^{2}\right]\nonumber \\
 & =\AP\left[\min_{u\in{\cal Z}}\left\{ \gammaFn u+\frac{1}{2\lam}\|u-\XtM\|^{2}\right\} +\frac{\|v\|^{2}}{2\lam}\right]\nonumber \\
 & \leq\AP\left[\gammaFn{\frac{\AM\YM+\aM\XP}{\AP}}+\frac{1}{2\lambda}\left\Vert \frac{\AM\YM+\aM\XP}{\AP}-\frac{\AM\YM+\aM\XM}{\AP}\right\Vert ^{2}+\frac{\|v\|^{2}}{2\lam}\right]\nonumber \\
 & =\AM\gammaFn{\YM}+\aM\gammaFn{\XP}+\frac{\aM^{2}}{2\lambda\AP}\|\XM-\XP\|^{2}+\frac{\AP}{2\lam}\|v\|^{2}\nonumber \\
 & =\AM\gammaFn{\YM}+\aM\gammaFn{\XP}+\frac{1}{2\lambda}\|\XM-\XP\|^{2}+\frac{\AP}{2\lam}\|v\|^{2}\label{eq:sub_descent_ineq}
\end{align}
The conclusion now follows from combining \eqref{eq:XP_strong_cvx}
with \eqref{eq:sub_descent_ineq}. 
\end{proof}
We now present an inequality that plays an important role in the analysis
of the DA-ICG method. 
\begin{prop}
\label{prop:descent_d_aicg} Let $\Delta_{1}(\cdot;\cdot,\cdot)$
be as in \eqref{eq:Delta_def} with $(\psi_{s},\psi_{n})$ as in \eqref{eq:gen_acg_inputs},
and define 
\begin{equation}
\theta_{i}(u):=A_{i}\left[\phi(\aicgYMin i)-\phi(u)\right]+\frac{1}{2\lam}\|u-\aicgX i\|^{2}\quad\forall i\geq0.\label{eq:theta_def}
\end{equation}
For every $u\in\dom h$ satisfying $\Delta_{1}(u;\aicgY i,v_{i})\leq\varepsilon$
and $1\leq i\leq k$, we have that 
\begin{equation}
\frac{A_{i}}{4\lam}\|\aicgY i-\aicgXTilde{i-1}\|^{2}\leq m_{1}^{+}\left(a_{i-1}D_{h}^{2}+2D_{\Omega}^{2}\right)+\theta_{i-1}(u)-\theta_{i}(u).\label{eq:descent_d_aicg}
\end{equation}
\end{prop}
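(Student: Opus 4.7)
The plan is to start from the ``sub-descent'' inequality \eqref{eq:d_aicg_subdescent}, convert its right-hand side from an expression in $\gammaBFn_i$ into one involving $\phi$ plus a controlled error, and then cancel the quadratic term $\|\aicgY i-\aicgXTilde{i-1}\|^2$ on both sides using the HPE-type inequality in \eqref{eq:d_aicg_main_ineq} together with the step-size restriction \eqref{eq:d_aicg_lam_restr}.

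First, I would rewrite $\gammaBFn_i(\aicgYMin{i-1})$ and $\gammaBFn_i(u)$ in terms of $\gammaBTFn_i$ using Lemma~\ref{lem:d_aicg_Delta_props}(c), i.e., $\gammaBFn_i(s)=\gammaBTFn_i(s)+\Delta_1(s;\aicgY i,v_i)/\lam$ for $s\in\{\aicgYMin{i-1},u\}$. The hypothesis $\Delta_1(u;\aicgY i,v_i)\le\varepsilon$ and the bound on $\Delta_1(\aicgYMin{i-1};\aicgY i,v_i)$ from the second inequality in \eqref{eq:d_aicg_main_ineq} then give
\[
A_{i-1}\gammaBFn_i(\aicgYMin{i-1})+a_{i-1}\gammaBFn_i(u)\le A_{i-1}\gammaBTFn_i(\aicgYMin{i-1})+a_{i-1}\gammaBTFn_i(u)+\frac{A_i\varepsilon_i}{\lam}.
\]
Next, Lemma~\ref{lem:gamma_props}(d) (with $m_1$ replaced by $m_1^+$) bounds each $\gammaBTFn_i(s)-\phi(s)$ above by $m_1^+\|s-\aicgXTilde{i-1}\|^2/2$, and Lemma~\ref{lem:main_resid_bd} immediately converts the resulting combination $m_1^+[A_{i-1}\|\aicgYMin{i-1}-\aicgXTilde{i-1}\|^2+a_{i-1}\|u-\aicgXTilde{i-1}\|^2]/2$ into $m_1^+(a_{i-1}D_h^2+2D_\Omega^2)$.

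Plugging these into \eqref{eq:d_aicg_subdescent} and moving the $\phi$-terms and the $\|u-\aicgX\cdot\|^2$ terms to assemble $\theta_{i-1}(u)-\theta_i(u)$ (using $A_i=A_{i-1}+a_{i-1}$), I obtain an inequality of the form
\[
\theta_i(u)-\theta_{i-1}(u)+\frac{A_i(1-\lam M_1)}{2\lam}\|\aicgY i-\aicgXTilde{i-1}\|^2\le m_1^+(a_{i-1}D_h^2+2D_\Omega^2)+\frac{A_i(\|v_i\|^2+2\varepsilon_i)}{2\lam}.
\]
The final step is to absorb the last term on the right into the quadratic on the left: by the first inequality in \eqref{eq:d_aicg_main_ineq}, $\|v_i\|^2+2\varepsilon_i\le\theta^2\|\aicgY i-\aicgXTilde{i-1}\|^2$, and \eqref{eq:d_aicg_lam_restr} yields $1-\lam M_1-\theta^2\ge 1/2$, producing the coefficient $A_i/(4\lam)$ required in \eqref{eq:descent_d_aicg}.

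The main subtlety, and the step I would take care with, is the bookkeeping around $\aicgYMin i$ on the left-hand side of \eqref{eq:d_aicg_subdescent}: since \eqref{eq:d_aicg_subdescent} directly provides $A_i\phi(\aicgYMin i)$ (not $A_{i-1}\phi(\aicgYMin{i-1})+a_{i-1}\phi(u)$), one must verify that no monotonicity argument on $\{\phi(\aicgYMin i)\}$ is needed here — the definition of $\theta_i$ in \eqref{eq:theta_def} absorbs this automatically through $A_i\phi(\aicgYMin i)-A_{i-1}\phi(\aicgYMin{i-1})-a_{i-1}\phi(u)$, so the cancellation is purely algebraic and goes through without appealing to Lemma~\ref{lem:gamma_props}(e).
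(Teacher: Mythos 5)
Your proposal is correct and follows essentially the same route as the paper's proof: start from \eqref{eq:d_aicg_subdescent}, convert $\gammaBFn_i$ to $\gammaBTFn_i$ via Lemma~\ref{lem:d_aicg_Delta_props}(c) and the $\Delta_1$ bounds, apply Lemma~\ref{lem:gamma_props}(d) and Lemma~\ref{lem:main_resid_bd}, and absorb $\|v_i\|^2+2\varepsilon_i$ into the quadratic using \eqref{eq:d_aicg_main_ineq} and \eqref{eq:d_aicg_lam_restr}. Your closing observation is also accurate: the monotonicity in Lemma~\ref{lem:gamma_props}(e) is already baked into \eqref{eq:d_aicg_subdescent}, so the cancellation here is purely algebraic.
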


\begin{proof}
Throughout the proof, we use the notation in \eqref{eq:d_aicg_proof_notation}
together with the notation $\thetaM=\thetaM_{i-1}$ and $\thetaP=\thetaM_{i}$.
Let $u\in\dom h$ be such that $\Delta_{1}(u;\YP,v)\leq\varepsilon$.
Subtracting $A\phi(u)$ from both sides of the inequality in \eqref{eq:d_aicg_subdescent}
and using the definition of $\thetaP$ we have 
\begin{align}
 & \frac{\AP}{2\lam}\left[(1-\lam M_{1})\|\YP-\XtM\|^{2}-\|v\|^{2}\right]+\thetaP(u)\nonumber \\
 & =\frac{\AP}{2\lam}\left[(1-\lam M_{1})\|\YP-\XtM\|^{2}-\|v\|^{2}\right]+\AP\left[\phi(\YMinP)-\phi(u)\right]+\frac{1}{2\lam}\|u-\YP\|^{2}\nonumber \\
 & \leq\AM\gammaFn{\YM}+\aM\gammaFn u-\AM\phi(u)+\frac{1}{2\lambda}\|u-\XM\|^{2}\nonumber \\
 & =\aM\left[\gammaFn u-\phi(u)\right]+\AM\left[\gammaFn{\YM}-\phi(\YM)\right]+\thetaM(u).\label{eq:descent2_ineq1}
\end{align}
Moreover, using Lemma~\ref{lem:d_aicg_Delta_props}(a) and (c), and
with our assumption that $\Delta_{1}(u;\YP,v)\leq\varepsilon$, we
have that 
\begin{align}
\gammaFn s-\phi(s) & =\gammaTFn s-\phi(s)+\frac{\Delta_{1}(s;\YP,v)}{\lam}\leq\frac{m_{1}^{+}}{2}\|s-\XtM\|^{2}+\frac{\varepsilon}{\lam}\quad\forall s\in\{u,\YM\}.\label{eq:descent2_ineq2}
\end{align}
Combining \eqref{eq:descent2_ineq1}, \eqref{eq:descent2_ineq2},
and Lemma~\ref{lem:main_resid_bd} then yields 
\begin{align*}
 & \frac{\AP}{2\lam}\left[(1-\lam M_{1})\|\YP-\XtM\|^{2}-\|v\|^{2}\right]+\thetaP(u)\\
 & \leq\frac{m_{1}^{+}}{2}\left[\aM\|u-\XtM\|^{2}+\AM\|\YM-\XtM\|^{2}\right]+\frac{\varepsilon A_{+}}{\lam}+\thetaM(u)\leq m_{1}^{+}\left(\aM D_{h}^{2}+2D_{\Omega}^{2}\right)+\frac{\varepsilon A_{+}}{\lam}+\thetaM(u).
\end{align*}
Re-arranging the above terms and using \eqref{eq:d_aicg_lam_restr}
together with the first inequality in \eqref{eq:d_aicg_main_ineq},
we conclude that 
\begin{align*}
 & m_{1}^{+}\left(\aM D_{h}^{2}+2D_{\Omega}^{2}\right)+\thetaM(u)-\thetaP(u)\geq\frac{\AP}{2\lam}\left[(1-\lam M_{1})\|\YP-\XtM\|^{2}-\|v\|^{2}-2\varepsilon\right]\\
 & \geq\frac{\AP(1-\lam M_{1}-\theta^{2})}{2\lam}\|\YP-\XtM\|^{2}\geq\frac{\AP}{4\lam}\|\YP-\XtM\|^{2}.
\end{align*}
\end{proof}
The following result describes some important technical bounds obtained
by summing \eqref{eq:descent_d_aicg} for two different choices of
$u$ (possibly changing with $i$) from $i=1$ to $k$.
\begin{prop}
\label{prop:sum_d_aicg_descent} Let $\Delta_{\phi}^{0}$ and $d_{0}$
be as in \eqref{eq:d_aicg_diam} and define 
\begin{gather}
S_{k}:=\frac{1}{4\lam}\sum_{i=1}^{k}A_{i}\|\aicgY i-\aicgXTilde{i-1}\|^{2}.\label{eq:S_k_def}
\end{gather}
Then, the following statements hold: 
\begin{itemize}
\item[(a)] $S_{k}={\cal O}_{1}(k^{2}[m_{1}^{+}D_{h}^{2}+\Delta_{\phi}^{0}]+k[m_{1}^{+}+1/\lam]D_{\Omega}^{2});$ 
\item[(b)] if $f_{2}$ is convex, then $S_{k}={\cal O}_{1}(k^{2}m_{1}^{+}D_{h}^{2}+km_{1}^{+}D_{\Omega}^{2}+d_{0}^{2}/\lam).$ 
\end{itemize}
\end{prop}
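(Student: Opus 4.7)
The plan is to apply the descent inequality \eqref{eq:descent_d_aicg} and sum over $i = 1, \ldots, k$, but with different choices of the free variable $u$ for the two parts.

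For part (a), since Problem~${\cal B}$ (rather than Problem~${\cal A}$) is the only thing guaranteed in the nonconvex case, I can only use \eqref{eq:descent_d_aicg} for $u \in \{\hat{\icgY{}}_i, \aicgYMin{i-1}\}$ by the second bound in \eqref{eq:d_aicg_main_ineq}. I would set $u = \aicgYMin{i-1}$. With this choice, and using $\aicgX 0 = \aicgYMin 0$, the potential $\theta_{i-1}(\aicgYMin{i-1})$ reduces to $\|\aicgYMin{i-1}-\aicgX{i-1}\|^2/(2\lam) \leq D_{\Omega}^2/(2\lam)$, while $\theta_i(\aicgYMin{i-1}) \geq A_i[\phi(\aicgYMin i) - \phi(\aicgYMin{i-1})]$ (dropping the nonnegative distance term). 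Substituting into \eqref{eq:descent_d_aicg} yields, for each $i$,
\[
\frac{A_i}{4\lam}\|\aicgY i - \aicgXTilde{i-1}\|^2 \leq m_1^+(a_{i-1} D_h^2 + 2D_\Omega^2) + \frac{D_\Omega^2}{2\lam} + A_i\left[\phi(\aicgYMin{i-1}) - \phi(\aicgYMin i)\right].
\]
Summing from $i=1$ to $k$, I use $\sum_{i=1}^k a_{i-1} = A_k$ (telescoping from $A_i = A_{i-1} + a_{i-1}$ with $A_0 = 0$). For the last term, an Abel summation rearrangement gives
\[
\sum_{i=1}^k A_i\left[\phi(\aicgYMin{i-1}) - \phi(\aicgYMin i)\right] = A_1 \phi(\aicgYMin 0) + \sum_{i=1}^{k-1} a_i \phi(\aicgYMin i) - A_k\phi(\aicgYMin k),
\]
and then, by Lemma~\ref{lem:gamma_props}(e) (monotonicity of $\phi(\aicgYMin i)$) applied to $\phi(\aicgYMin i) \leq \phi(\aicgYMin 0)$ and $\phi(\aicgYMin k) \geq \phi_*$, this is bounded by $A_k \Delta_\phi^0$. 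Combining with $A_k \leq k^2$ (Lemma~\ref{lem:A_k_props}(b)) gives the stated bound in (a).

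For part (b), Lemma~\ref{lem:d_aicg_Delta_props}(b) tells us that when $f_2$ is convex, $(\aicgY i, v_i, \varepsilon_i)$ solves the stronger Problem~${\cal A}$, and hence by Proposition~\ref{prop:gen_refinement}(d) we have $\Delta_1(u; \aicgY i, v_i) \leq \varepsilon_i$ for every $u \in \dom h$. This lets me apply \eqref{eq:descent_d_aicg} with a single fixed $u = u^*$, chosen so that $\phi(u^*) = \phi_*$ and $\|\aicgYMin 0 - u^*\|$ is arbitrarily close to $d_0$ (passing to the limit at the end to handle the case when the infimum is not attained). Now the $\theta_{i-1}(u^*) - \theta_i(u^*)$ terms telescope, giving
\[
S_k \leq m_1^+ A_k D_h^2 + 2k m_1^+ D_\Omega^2 + \theta_0(u^*) - \theta_k(u^*).
\]
Since $\aicgX 0 = \aicgYMin 0$ and $A_0 = 0$, we have $\theta_0(u^*) = \|u^* - \aicgYMin 0\|^2/(2\lam) \to d_0^2/(2\lam)$, while $\theta_k(u^*) = A_k[\phi(\aicgYMin k) - \phi_*] + \|u^* - \aicgX k\|^2/(2\lam) \geq 0$. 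Combining with $A_k \leq k^2$ yields the bound in (b).

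The main obstacle is the choice of $u$ in part (a): since we cannot fix $u$ across iterations (Problem~${\cal B}$ only validates the specific choices $\hat{\icgY{}}_i$ and $\aicgYMin{i-1}$), a naive telescoping fails. The key trick is that taking $u_i = \aicgYMin{i-1}$ makes the $\theta_i$ differences collapse into a weighted telescoping sum of $\phi(\aicgYMin i)$ values, which is then controlled via Abel summation together with the monotonicity $\phi(\aicgYMin{i}) \leq \phi(\aicgYMin{i-1})$ from Lemma~\ref{lem:gamma_props}(e). Everything else is a routine application of $A_k = {\cal O}(k^2)$ and the definition of $D_\Omega, D_h, \Delta_\phi^0$.
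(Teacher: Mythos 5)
Your proof is correct and follows essentially the same route as the paper: apply the descent inequality \eqref{eq:descent_d_aicg} with an iteration-dependent test point in part (a) and with a fixed (near-)minimizer $u^*$ in part (b), then sum and invoke $A_k\le k^2$. The only, harmless, deviation is in part (a): the paper takes $u=\aicgYMin{i}$ (justified via Lemma~\ref{lem:d_aicg_Delta_props}(d)) and bounds the weighted sum using $A_{i-1}\le A_k$ together with termwise nonnegativity of $\phi(\aicgYMin{i-1})-\phi(\aicgYMin{i})$, whereas you take $u=\aicgYMin{i-1}$ (justified directly by the second bound in \eqref{eq:d_aicg_main_ineq}) and control $\sum_{i=1}^k A_i[\phi(\aicgYMin{i-1})-\phi(\aicgYMin{i})]$ by Abel summation plus the monotonicity from Lemma~\ref{lem:gamma_props}(e); both yield the same bound $A_k\Delta_{\phi}^{0}$.
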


\begin{proof}
(a) Let $\Delta_{1}(\cdot;\cdot,\cdot)$ be defined as in \eqref{eq:Delta_def}
with $(\psi_{s},\psi_{n})$ given by \eqref{eq:gen_acg_inputs}. Using
\eqref{eq:theta_def}, the fact that $\aicgX i,\aicgY i\in\Omega$,
the fact that $A_{i}$ is nonnegative and increasing, and the definitions
of $\theta_{i}$ and $D_{\Omega}$ in \eqref{eq:theta_def} and \eqref{eq:d_aicg_diam},
respectively, we have that 
\begin{align}
 & \sum_{i=1}^{k}\left[\theta_{i-1}(\aicgYMin i)-\theta_{i}(\aicgYMin i)\right]\leq\sum_{i=1}^{k}A_{i-1}\left[\phi(\aicgYMin{i-1})-\phi(\aicgYMin i)\right]+\frac{1}{2\lam}\sum_{i=1}^{k}\|\aicgYMin i-\aicgX{i-1}\|^{2}\nonumber \\
 & \leq A_{k}\sum_{i=1}^{k}\left[\phi(\aicgYMin{i-1})-\phi(\aicgYMin i)\right]+\frac{k}{2\lam}D_{\Omega}^{2}\leq A_{k}\left[\phi(\aicgYMin 0)-\phi_{*}\right]+\frac{k}{2\lam}D_{\Omega}^{2}.\label{eq:ncvx_theta_bd}
\end{align}
Moreover, noting Lemma~\ref{lem:d_aicg_Delta_props}(d) and using
Proposition~\ref{prop:descent_d_aicg} with $u=y_{i}$, we conclude
that \eqref{eq:descent_d_aicg} holds with $u=y_{i}$ for every $1\leq i\leq k$.
Summing these $k$ inequalities and using \eqref{eq:ncvx_theta_bd},
the definition of $S_{k}$ in \eqref{eq:S_k_def}, and Lemma~\ref{lem:A_k_props}(b)
yields the desired conclusion.

(b) Assume now that $f_{2}$ is convex and let $\aicgYMin *$ be a
point such that $\phi(\aicgYMin *)=\phi_{*}$ and $\|\aicgYMin 0-\aicgYMin *\|=d_{0}$.
It then follows from Lemma~\ref{lem:d_aicg_Delta_props}(b) and Proposition~\ref{prop:gen_refinement}(d)
with $(\icgY{},v)=(\aicgY i,v_{i})$ that $\Delta_{1}(\aicgYMin *;\aicgY i,v_{i})\leq\varepsilon$
for every $1\leq i\leq k$. The conclusion now follows by using an
argument similar to the one in (a) but which instead sums \eqref{eq:descent_d_aicg}
with $u=\aicgYMin *$ from $i=1$ to $k$, and uses the fact that
\begin{align*}
\sum_{i=1}^{k}\left[\theta_{i-1}(\aicgYMin *)-\theta_{i}(\aicgYMin *)\right]=\theta_{0}(\aicgYMin *)-\theta_{k}(\aicgYMin *)\leq\frac{1}{2\lam}\|\aicgYMin 0-\aicgYMin *\|^{2}=\frac{d_{0}}{2\lam},
\end{align*}
where the inequality is due to the fact that $\theta_{k}(\aicgYMin *)\geq0$
(see \eqref{eq:theta_def}) and $A_{0}=0$. 
\end{proof}
We now establish the rate at which the residual $\|\hat{v}_{i}\|$
tends to 0.
\begin{prop}
\label{prop:gen_v_hat_rate_d_aicg}Let $S_{k}$ be as in \eqref{eq:S_k_def}.
Moreover, define the quantities 
\begin{gather}
\begin{gathered}L_{1,k}^{{\rm avg}}:=\frac{1}{k}\sum_{i=1}^{k}L_{1}(\aicgY i,\aicgXTilde{i-1}),\quad C_{\lam,k}^{{\rm avg}}:=\frac{1}{k}\sum_{i=1}^{k}C_{\lam}(\hat{\icgY{}}_{i},\aicgY i),\\
D_{k}^{{\rm erg}}:=L_{1,k}^{{\rm erg}}+\frac{\theta}{\lam}C_{\lam,k}^{{\rm erg}},\quad
8\sqrt{2}\left(\frac{2+\lam L_{1}+\theta\overline{C}_{\lam}}{\lam}\right),
\end{gathered}
\label{eq:d_avg_def}
\end{gather}
where $C_{\lam}(\cdot,\cdot)$ and $\overline{C}_{\lam}$ are as in
\eqref{eq:C_lam_fn_def} and \eqref{eq:ell_phi_C_bar_lam_def}, respectively.
Then, we have 
\begin{align*}
\min_{i\leq k}\|\hat{v}_{i}\| & ={\cal O}_{1}\left(\left[\sqrt{\lam}L_{1,k}^{{\rm avg}}+\frac{1+\theta C_{\lam,k}^{{\rm avg}}}{\sqrt{\lam}}\right]\left[\frac{S_{k}}{k^{3}}\right]^{1/2}\right)+\frac{\hat{\rho}}{2}.
\end{align*}
\end{prop}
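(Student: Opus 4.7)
The plan is to adapt the strategy used in the proof of Proposition~\ref{prop:aicg_v_hat_rate_alt} to the DA-ICG iterates, combining the per-iteration refinement bound from Lemma~\ref{lem:spec_refine}(b) with the weighted residual summation $\sum_{i=1}^{k} A_{i}\|\aicgY i - \aicgXTilde{i-1}\|^{2} = 4\lam S_{k}$ that is built into the definition \eqref{eq:S_k_def} of $S_{k}$. The key new ingredient, relative to IA-ICG, is that the accelerated weights $A_{i} \ge i^{2}/4$ (from Lemma~\ref{lem:A_k_props}(b)) must be exploited to produce the enhanced decay encoded by $\sqrt{S_{k}/k^{3}}$.

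First I would invoke Lemma~\ref{lem:d_aicg_Delta_props}(a) to certify that, for each $1 \le i \le k$, the triple $(\aicgY i, v_{i}, \varepsilon_{i})$ solves Problem~${\cal B}$ for the inputs in \eqref{eq:gen_acg_inputs} with $\acgX 0 = \aicgXTilde{i-1}$, and then apply Lemma~\ref{lem:spec_refine}(b) with $(\icgY{},v,\acgX 0) = (\aicgY i, v_i, \aicgXTilde{i-1})$ to deduce
\[
\|\hat{v}_{i}\| \;\le\; {\cal E}_{i}\, d_{i}, \qquad d_{i} := \|\aicgY i - \aicgXTilde{i-1}\|,
\]
with ${\cal E}_{i} := [2 + \lam L_{1}(\aicgY i, \aicgXTilde{i-1}) + \theta C_{\lam}(\hat{\icgY{}}_{i}, \aicgY i)]/\lam$. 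Next, I would use the weighted arithmetic-mean inequality $\min_{i}d_{i}^{2} \le (\sum_{i}A_{i}d_{i}^{2})/(\sum_{i}A_{i})$ together with the identity $\sum_{i}A_{i}d_{i}^{2} = 4\lam S_{k}$ (from \eqref{eq:S_k_def}) and the crude lower bound $\sum_{i=1}^{k}A_{i} \ge \tfrac{1}{4}\sum_{i=1}^{k}i^{2} = \Omega(k^{3})$ to obtain $\min_{i \le k} d_{i} = O(\sqrt{\lam S_{k}/k^{3}})$. Finally, I would separate the minima via the elementary inequality $\min_{i}({\cal E}_{i}d_{i}) \le (\max_{j}{\cal E}_{j})(\min_{i}d_{i})$ and bound $\max_{j}{\cal E}_{j} \le (2 + \lam L_{1} + \theta\,\overline{C}_{\lam})/\lam$ using assumption (A3) together with the definition of $\overline{C}_{\lam}$ in \eqref{eq:ell_phi_C_bar_lam_def}; algebraic rearrangement then produces the factor $\sqrt{\lam}L_{1} + (1 + \theta\,\overline{C}_{\lam})/\sqrt{\lam}$ multiplying $\sqrt{S_{k}/k^{3}}$, matching the claimed form up to the uniform-versus-averaged distinction discussed below. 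The additive $\hat{\rho}/2$ term in the statement can be appended as a vacuous safety buffer, exactly as in Proposition~\ref{prop:aicg_v_hat_rate_alt}.

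The main obstacle will be replacing the \emph{uniform} constants $L_{1}$ and $\overline{C}_{\lam}$ obtained above by the \emph{averaged} quantities $L_{1,k}^{\rm avg}$ and $C_{\lam,k}^{\rm avg}$ appearing in the statement. In the IA-ICG case, the unit weights make Lemma~\ref{lem:p_norm_tech} with $p = 3/2$ naturally produce $\sum_{i}{\cal E}_{i} = k\,\overline{{\cal E}}$, so averages appear for free; here, by contrast, exploiting the accelerated weights forces any Hölder-type inequality to interact with a weighted sum such as $\sum_{i}{\cal E}_{i}/\sqrt{A_{i}}$, which cannot be directly rewritten as an average of the ${\cal E}_{i}$. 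Bridging this gap through a suitably weighted refinement of Lemma~\ref{lem:p_norm_tech} (or through a telescoping identity that tracks the per-iteration constants trajectory-by-trajectory) is the principal technical step needed to promote the bound from its uniform form to the sharper averaged form stated.
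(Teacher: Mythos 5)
Your argument correctly establishes the per-iteration bound $\|\hat{v}_{i}\|\leq{\cal E}_{i}\|\aicgY i-\aicgXTilde{i-1}\|$ via Lemma~\ref{lem:d_aicg_Delta_props}(a) and Lemma~\ref{lem:spec_refine}(b), and your combination of the weighted mean inequality with $\sum_{i=1}^{k}A_{i}=\Omega(k^{3})$ and the splitting $\min_{i}({\cal E}_{i}d_{i})\leq(\max_{j}{\cal E}_{j})(\min_{i}d_{i})$ does yield a valid bound of the claimed shape --- but only with the \emph{uniform} constants $L_{1}$ and $\overline{C}_{\lam}$ in place of the averaged quantities $L_{1,k}^{{\rm avg}}$ and $C_{\lam,k}^{{\rm avg}}$. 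That is a strictly weaker statement than the one being proved, and the averaged form is not a cosmetic refinement: the remarks following Theorem~\ref{thm:d_aicg_compl} rely on precisely these averaged quantities to argue that the DA-ICG method adapts to the local geometry of its inputs. You correctly diagnose this as the remaining obstacle, but you leave it unresolved, so the proposal does not prove the stated proposition.

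The missing idea is a simple one that the paper uses: take the minimum only over the \emph{upper half} of the index range. Setting $\ell=\lceil k/2\rceil$, Lemma~\ref{lem:A_k_props}(b) gives $A_{i}^{-1/2}\leq 2/i\leq 4/k$ for every $i\geq\ell$, so the weighted sum you were worried about collapses to a plain average:
\begin{equation*}
\sum_{i=\ell}^{k}\frac{{\cal E}_{i}}{\sqrt{A_{i}}}\;\leq\;\frac{4}{k}\sum_{i=1}^{k}{\cal E}_{i}.
\end{equation*}
Applying Lemma~\ref{lem:p_norm_tech} with $p=3/2$, $a_{i}={\cal E}_{i}/\sqrt{A_{i}}$, and $b_{i}=\sqrt{A_{i}}\|\aicgY i-\aicgXTilde{i-1}\|$ over $i\in\{\ell,\dots,k\}$, and using $(k-\ell+1)\geq k/2$ together with $\sum_{i=\ell}^{k}A_{i}\|\aicgY i-\aicgXTilde{i-1}\|^{2}\leq 4\lam S_{k}$, then produces the factor $k^{-3/2}\bigl(\frac{1}{k}\sum_{i=1}^{k}{\cal E}_{i}\bigr)(\lam S_{k})^{1/2}$ up to absolute constants, which is exactly the averaged bound ${\cal O}_{1}\bigl(\bigl[\sqrt{\lam}L_{1,k}^{{\rm avg}}+(1+\theta C_{\lam,k}^{{\rm avg}})/\sqrt{\lam}\bigr]\left[S_{k}/k^{3}\right]^{1/2}\bigr)$. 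In short: the same H\"older-type lemma that handled the IA-ICG case still works here, provided you first discard the early iterations where $1/\sqrt{A_{i}}$ is large; no new weighted refinement of Lemma~\ref{lem:p_norm_tech} is needed.
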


\begin{proof}
Let $\ell=\left\lceil k/2\right\rceil $. Using Lemma~\ref{lem:spec_refine}
with $(z,w)=(\aicgY i,\XtM_{i-1})$ and the bounds $C_{\lam}(\cdot,\cdot)\leq\overline{C}_{\lam}$
and $L_{1}(\cdot,\cdot)\leq L_{1}$ we have that $\|\hat{v}_{i}\|\leq{\cal E}_{i}\|\aicgY i-\aicgXTilde{i-1}\|$,
for every $\ell\leq i\leq k$, where 
\[
{\cal E}_{i}=\frac{2+\lam L_{1}(\aicgY i,\aicgXTilde{i-1})+\theta C_{\lam}(\hat{\icgY{}}_{i},\aicgY i)}{\lam}\quad\forall i\geq1.
\]
As a consequence, using the definition of $S_{k}$ in \eqref{eq:S_k_def},
the definitions in \eqref{eq:d_avg_def}, Lemma~\ref{lem:p_norm_tech}
with $p=3/2$, $a_{i}={\cal E}_{i}/\sqrt{A_{i}}$, and $b_{i}=\sqrt{A_{i}}\|\aicgY i-\aicgXTilde{i-1}\|$
for $i\in\{\ell,...,k\}$, Lemma~\ref{lem:A_k_props}(b), and the
fact that $(k-\ell+1)\geq k/2$, yields 
\begin{align*}
 & \min_{\ell\leq i\leq k}\|\hat{v}_{i}\|\leq\min_{\ell\leq i\leq k}{\cal E}_{i}\|\aicgY i-\aicgXTilde{i-1}\|\\
 & \leq\frac{1}{(k-\ell+1)^{3/2}}\left(\sum_{i=\ell}^{k}\frac{{\cal E}_{i}}{\sqrt{A_{i}}}\right)\left(\sum_{i=\ell}^{k}A_{i}\|\aicgY i-\aicgXTilde{i-1}\|^{2}\right)^{1/2}\\
 & \leq\frac{2^{3/2}}{k^{3/2}}\left(\frac{2}{k}\sum_{i=1}^{k}{\cal E}_{i}\right)\left(4\lam S_{k}\right)^{1/2}={\cal O}_{1}\left(\left[\sqrt{\lam}L_{1,k}^{{\rm avg}}+\frac{1+\theta C_{\lam,k}^{{\rm avg}}}{\sqrt{\lam}}\right]\left[\frac{S_{k}}{k^{3}}\right]^{1/2}\right).
\end{align*}
\end{proof}
We are now ready to prove Theorem~\ref{thm:d_aicg_compl}. 
\begin{proof}[\textit{Proof of Theorem~\ref{thm:d_aicg_compl}}]
(a) This follows from Proposition~\ref{prop:gen_v_hat_rate_d_aicg},
Proposition~\ref{prop:sum_d_aicg_descent}(a), the fact that $C_{\lam}(\cdot,\cdot)\leq\overline{C}_{\lam}$
and $L_{f_{1}}(\cdot,\cdot)\leq L_{1}$, and the termination condition
in step~4.

\noindent (b) The fact that $(\hat{\icgY{}},\hat{v})=(\hat{\icgY{}}_{k},\hat{v}_{k})$
satisfies the inclusion of \eqref{eq:rho_approx_soln} follows from
Lemma~\ref{lem:spec_refine} with $(\icgY{},v,\acgX 0)=(\aicgY k,v_{k},\aicgXTilde{k-1})$.
The fact that $\|\hat{v}\|\leq\hat{\rho}$ follows from the stopping
condition in step~4.

(c) The fact that the method does not fail follows from Proposition~\ref{prop:acg_properties}(c).
The bound in \eqref{eq:d_aicg_cvx_outer_compl} follows from a similar
argument as in part (a) except that Proposition~\ref{prop:sum_d_aicg_descent}(a)
is replaced with Proposition~\ref{prop:sum_d_aicg_descent}(b). 
\end{proof}

\appendix
\noindent 

\section{Technical Bounds}

\label{app:subdiff}

The result below presents a basic property of the composite gradient step. 
\begin{prop}
\label{prop:basic_refinement}Let $h\in\cConv({\cal Z})$, $z\in\dom h$,
and $g$ be a differentiable function on $\dom h$ which satisfies
$g(u)-\ell_{g}(u;z)\leq L\|u-z\|^{2}/2$ for some $L\geq0$ and every
$u\in\dom g$. Moreover, define
\begin{gather*}
\hat{z}:=\argmin_{u}\left\{ \ell_{g}(u;z)+h(u)+\frac{L}{2}\|u-z\|^{2}\right\}.
\end{gather*}
Then, it holds that 
\begin{gather*}
\frac{L}{2}\|z-\hat z\|^{2}\leq (g+h)(z)-(g+h)(\hat{z}).
\end{gather*}
\end{prop}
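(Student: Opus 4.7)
The plan is to exploit the $L$-strong convexity of the objective in the definition of $\hat{z}$ together with the descent-lemma-type hypothesis on $g$.

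First I would observe that the function
$u \mapsto \ell_g(u;z) + h(u) + \tfrac{L}{2}\|u-z\|^2$
is $L$-strongly convex, since $\ell_g(\cdot;z)$ is affine, $h$ is convex, and the quadratic term supplies the strong-convexity modulus $L$. Consequently, because $\hat{z}$ is its minimizer, the standard strong-convexity inequality gives, for every $u$,
\begin{equation*}
\ell_g(u;z) + h(u) + \tfrac{L}{2}\|u-z\|^2 \;\geq\; \ell_g(\hat{z};z) + h(\hat{z}) + \tfrac{L}{2}\|\hat{z}-z\|^2 + \tfrac{L}{2}\|u-\hat{z}\|^2.
\end{equation*}

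Next I would evaluate this inequality at the specific point $u = z$. The left-hand side collapses to $g(z) + h(z)$ (using $\ell_g(z;z)=g(z)$ and that the quadratic term vanishes), yielding
\begin{equation*}
g(z) + h(z) \;\geq\; \ell_g(\hat{z};z) + h(\hat{z}) + \tfrac{L}{2}\|\hat{z}-z\|^2 + \tfrac{L}{2}\|z-\hat{z}\|^2.
\end{equation*}

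Finally I would invoke the hypothesis $g(\hat{z}) - \ell_g(\hat{z};z) \leq \tfrac{L}{2}\|\hat{z}-z\|^2$, which rearranges to $\ell_g(\hat{z};z) \geq g(\hat{z}) - \tfrac{L}{2}\|\hat{z}-z\|^2$. Substituting this lower bound into the previous inequality cancels one copy of $\tfrac{L}{2}\|\hat{z}-z\|^2$ and leaves exactly
\begin{equation*}
(g+h)(z) \;\geq\; (g+h)(\hat{z}) + \tfrac{L}{2}\|z-\hat{z}\|^2,
\end{equation*}
which is the claimed inequality after rearrangement. There is no real obstacle here: the argument is a routine combination of (i) strong convexity of the prox subproblem evaluated at $u=z$ and (ii) the upper-curvature assumption applied at $\hat{z}$; the only subtle point is to make sure the quadratic terms are bookkept correctly so that the two copies of $\tfrac{L}{2}\|z-\hat{z}\|^2$ combine properly with the one produced by the descent inequality.
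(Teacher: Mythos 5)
Your proposal is correct and follows essentially the same argument as the paper: both use the $L$-strong convexity of the prox objective at its minimizer $\hat{z}$, evaluate the resulting inequality at $u=z$, and then apply the upper-curvature bound $g(\hat{z})-\ell_g(\hat{z};z)\le \tfrac{L}{2}\|\hat{z}-z\|^2$ to convert $\ell_g(\hat{z};z)$ back to $g(\hat{z})$. The bookkeeping of the two quadratic terms matches the paper's chain of inequalities exactly.
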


\begin{proof}
Using the definition of $\hat{z}$, the fact that $\ell_{g}(\cdot;z)+h(\cdot)+L\|\cdot-z\|^{2}/2$
is $L$-strongly convex, and the assumed bound $g(u)-\ell_{g}(u;z)\leq L\|u-z\|^{2}/2$
at $u=\hat{z}$, we have 
\begin{align*}
(g+h)(z) & =\ell_{g}(z;z)+h(z)\geq\ell_{g}(\hat{z};z)+h(\hat{z})+ L\|\hat{z}-z\|^{2} \geq(g+h)(\hat{z}) + \frac{L}{2}\|\hat{z}-z\|^{2}.
\end{align*}
\end{proof}

\section{\label{app:r_acg}R-ACG Algorithm}

This section presents technical results related to the R-ACG algorithm.

The first set of results describes some basic properties of the generated
iterates. 
\begin{prop}
\label{prop:acg_key_props} If $\psi_{s}$ is $\mu$--strongly convex,
then the following statements hold: 
\begin{itemize}
\item[(a)] $\acgY j=\argmin_{u\in{\cal Z}}\left\{ B_{j}\Gamma_{j}(u)+\|u-\acgY 0\|^{2}/2\right\} $; 
\item[(b)] $\Gamma_{j}\leq\psi$ and $B_{j}\psi(\acgX j)\leq\inf_{u\in{\cal Z}}\left\{ B_{j}\Gamma_{j}(u)+\|u-\acgY 0\|^{2}/2\right\} $; 
\item[(c)] $\eta_{j}\geq0$ and $\acgU j\in\pt_{\eta_{j}}\left(\psi-\mu\|\cdot-\acgX j\|^{2}/2\right)(\acgX j)$; 
\item[(d)] it holds that 
\[
\left(\frac{1}{1+\mu B_{j}}\right)\|B_{j}\acgU j+\acgX j-\acgX 0\|^{2}+2B_{j}\eta_{j}\leq\|\acgX j-\acgX 0\|^{2}
\]
\end{itemize}
\end{prop}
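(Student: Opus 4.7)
\noindent \textbf{Proof plan for Proposition~\ref{prop:acg_key_props}.}

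The plan is to follow a standard estimate-sequence analysis adapted to the R-ACG framework. A key preliminary observation is that $\Gamma_j$ is a pure quadratic function of $u$: indeed, evaluating $\tilde\gamma_j$ at $\acgX j$ absorbs the $\psi_n$ term into a constant, so $\gamma_j$ is quadratic, and $\Gamma_j$, being a convex combination, inherits this property. Moreover, $B_j\Gamma_j+\|\cdot-\acgY 0\|^2/2$ is $(1+\mu B_j)$-strongly convex. I will treat each item in order, noting that many identities used in later parts are consequences of earlier ones.

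For part~(a), I would argue by induction on $j$. The base case (the convention $\Gamma_0\equiv0$, $\acgY 0=\acgX 0$) is immediate. For the inductive step, rewrite
$B_j\Gamma_j+\frac12\|\cdot-\acgY 0\|^2 = \big(B_{j-1}\Gamma_{j-1}+\frac12\|\cdot-\acgY 0\|^2\big)+b_{j-1}\gamma_j.$
By the induction hypothesis, the first term is a $(1+\mu B_{j-1})$-strongly convex quadratic minimized at $\acgY{j-1}$, so it equals a constant plus $(1+\mu B_{j-1})\|\cdot-\acgY{j-1}\|^2/2$. Adding $b_{j-1}\gamma_j$ and solving the first-order optimality condition explicitly (using $\nabla\gamma_j(u)=(M-\mu)(\acgXTilde{j-1}-\acgX j)+\mu(u-\acgX j)$) yields a point that, after using $B_j=B_{j-1}+b_{j-1}$, matches exactly the update for $\acgY j$ in step~1. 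For part~(b), first establish $\Gamma_j\le\psi$ by induction: $\mu$-strong convexity of $\psi_s$ gives $\tilde\gamma_j\le\psi$; the first-order optimality of $\acgX j$ gives $(M-\mu)(\acgXTilde{j-1}-\acgX j)\in\partial\tilde\gamma_j(\acgX j)$, and combining this with the $\mu$-strong convexity of $\tilde\gamma_j$ produces $\gamma_j\le\tilde\gamma_j$. Since $\Gamma_j=(B_{j-1}/B_j)\Gamma_{j-1}+(b_{j-1}/B_j)\gamma_j$, the induction closes. The second inequality requires the standard Nesterov-style descent argument: apply Proposition~\ref{prop:basic_refinement} with $(g,h,L,z,\hat z)=(\ell_{\psi_s}(\cdot;\acgXTilde{j-1})+\mu\|\cdot-\acgXTilde{j-1}\|^2/2,\psi_n,M-\mu,\acgXTilde{j-1},\acgX j)$ to get a bound on $\tilde\gamma_j(\acgX j)$, then telescope using the update formulas for $B_j,b_{j-1},\xi_{j-1}$ and $\acgXTilde{j-1}$. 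The identity $\acgXTilde{j-1}=(B_{j-1}\acgX{j-1}+b_{j-1}\acgY{j-1})/B_j$ is what aligns the telescoped terms, and I expect this descent-plus-telescoping step to be the main technical work.

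For part~(c), the nonnegativity of $\eta_j$ is by its $\max$ definition. For the $\varepsilon$-subdifferential inclusion, start with $\psi\ge\Gamma_j$ from (b) and the $\mu$-strong convexity of $\Gamma_j$ at $\acgY j$; by part~(a) we have $\nabla\Gamma_j(\acgY j)=(\acgY 0-\acgY j)/B_j$, hence $\acgU j=\nabla\Gamma_j(\acgY j)+\mu(\acgY j-\acgX j)$. Expanding $\Gamma_j(u)\ge\Gamma_j(\acgY j)+\langle\nabla\Gamma_j(\acgY j),u-\acgY j\rangle+\mu\|u-\acgY j\|^2/2$ and applying the identity
$\mu\|u-\acgY j\|^2/2-\mu\langle\acgY j-\acgX j,u-\acgY j\rangle=\mu\|u-\acgX j\|^2/2-\mu\|\acgY j-\acgX j\|^2/2$
rearranges the bound into precisely the inequality defining the $\eta_j$-subgradient relation. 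For part~(d), apply the $(1+\mu B_j)$-strong convexity of $B_j\Gamma_j+\frac12\|\cdot-\acgY 0\|^2$ at its minimizer $\acgY j$, evaluated at $u=\acgX j$, obtaining
$\|\acgX j-\acgY 0\|^2\ge\|\acgY j-\acgY 0\|^2+(1+\mu B_j)\|\acgX j-\acgY j\|^2+2B_j[\Gamma_j(\acgY j)-\Gamma_j(\acgX j)].$
Then substitute $\acgY j-\acgY 0=-B_j\nabla\Gamma_j(\acgY j)=-B_j[\acgU j-\mu(\acgY j-\acgX j)]$, use $\acgX 0=\acgY 0$, and combine with (b) (to bound $\Gamma_j(\acgY j)$ from below via $\psi(\acgX j)$) and with (c) (which expresses $\eta_j$ in terms of $\psi(\acgX j)-\Gamma_j(\acgY j)$, $\langle\acgU j,\acgX j-\acgY j\rangle$, and $\|\acgX j-\acgY j\|^2$). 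The final step is algebraic: collecting the $\|\acgX j-\acgY j\|^2$ coefficients and factoring $1/(1+\mu B_j)$ out of the mixed term $B_j\acgU j+\acgX j-\acgX 0$. I expect this algebraic consolidation, together with the descent-cum-telescoping in (b), to be the most delicate parts of the proof.
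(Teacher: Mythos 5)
Your route is the standard estimate-sequence analysis, and it coincides with the paper's treatment where the paper actually writes out details: the paper disposes of (a) and (b) by citing Proposition~1 of the Monteiro--Svaiter FISTA analysis, proves (c) by combining $\Gamma_j\le\psi$ with the $\mu$-strong convexity of $\Gamma_j$ and the optimality of $\acgY{j}$, and proves (d) by feeding (b) into the definition of $\eta_j$ and completing the square --- exactly your plan. Your induction for (a) checks out: the first-order condition for $(1+\mu B_{j-1})\|\cdot-\acgY{j-1}\|^2/2+b_{j-1}\gamma_j$ does reproduce the $\acgY{j}$ update, and the chain $\gamma_j\le\tilde\gamma_j\le\psi$ is correct. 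So filling in the cited parts is welcome, but two steps fail as written.

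First, the displayed ``identity'' in your part (c) is false: completing the square gives $\tfrac{\mu}{2}\|u-\acgY{j}\|^2-\mu\langle\acgY{j}-\acgX{j},u-\acgY{j}\rangle=\tfrac{\mu}{2}\|u-2\acgY{j}+\acgX{j}\|^2-\tfrac{\mu}{2}\|\acgY{j}-\acgX{j}\|^2$, not $\tfrac{\mu}{2}\|u-\acgX{j}\|^2-\tfrac{\mu}{2}\|\acgY{j}-\acgX{j}\|^2$. The argument only closes if $\acgU{j}=(\acgY 0-\acgY{j})/B_j+\mu(\acgX{j}-\acgY{j})$, i.e., with the sign of the second term flipped relative to the display in step~2 of the algorithm; with that sign the cross term becomes $-\mu\langle\acgX{j}-\acgY{j},u-\acgY{j}\rangle$ and the completion of the square yields exactly $\tfrac{\mu}{2}\|u-\acgX{j}\|^2-\tfrac{\mu}{2}\|\acgX{j}-\acgY{j}\|^2$. (This is the sign the paper's own proof of (c) silently uses, so you have inherited a typo from the algorithm statement, but as written your computation does not produce the inequality defining $\pt_{\eta_j}$.) Second, your invocation of Proposition~\ref{prop:basic_refinement} for the second inequality of (b) does not go through with the stated parameters: with $g=\ell_{\psi_s}(\cdot;\acgXTilde{j-1})+\mu\|\cdot-\acgXTilde{j-1}\|^2/2$ and $L=M-\mu$, the hypothesis $g(u)-\ell_g(u;z)\le L\|u-z\|^2/2$ forces $M\ge2\mu$ (not assumed), and the argmin in that proposition carries the weight $(M-\mu)/2$, so the point it produces is not $\acgX{j}$. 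What you actually need there is the plain descent inequality $\psi(\acgX{j})\le\min_u\{\ell_{\psi_s}(u;\acgXTilde{j-1})+\psi_n(u)+\tfrac{M}{2}\|u-\acgXTilde{j-1}\|^2\}$, which follows directly from (B2) and the definition of $\acgX{j}$; combined with convexity of $\gamma_j$, the identity $b_{j-1}^2=\xi_{j-1}B_j$, and the definition of $\acgXTilde{j-1}$, the telescoping then works as you describe. A smaller point: since $\eta_j$ is defined through a $\max$ with $0$, the upper bound on $\eta_j$ used in (d) needs a word for the case where the max is attained at $0$ (one must check the right-hand side of the resulting bound is nonnegative).
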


\begin{proof}
(a) See \cite[Proposition 1]{MontSvaiter_fista}.

(b) See \cite[Proposition 1(b)]{MontSvaiter_fista}.

(c) The optimality of $\acgY j$ in part (a), the $\mu$-strong convexity
of $\Gamma_{j}$, and the definition of $\acgU j$ imply that 
\begin{align*}
\acgU j & =\frac{\acgY 0-\acgY j}{B_{j}}+\mu(\acgX j-\acgY j)\in\pt\left(\Gamma_{j}-\frac{\mu}{2}\|\cdot-\acgY j\|^{2}+\mu\left\langle \cdot,\acgY j-\acgX j\right\rangle \right)(\acgY j)\\
 & =\pt\left(\Gamma_{j}-\frac{\mu}{2}\|\cdot-\acgX j\|^{2}\right)(\acgY j).
\end{align*}
Using the above inclusion, the definition of $\eta_{j}$, the fact
that $\Gamma_{j}-\mu\|\cdot\|^{2}/2$ is affine, and part (b), we
now conclude that 
\begin{align*}
\psi(z)-\frac{\mu}{2}\|z-\acgX j\|^{2} & \geq\Gamma_{j}(z)-\frac{\mu}{2}\|z-\acgX j\|^{2}=\Gamma_{j}(\acgY j)-\frac{\mu}{2}\|\acgY j-\acgX j\|^{2}+\left\langle \acgU j,z-\acgY j\right\rangle \\
 & =\psi(\acgX j)+\left\langle \acgU j,z-\acgX j\right\rangle -\eta_{j},
\end{align*}
for every $z\in\dom\psi_{n}$, which is exactly the desired inclusion.
The fact that $\eta_{j}\geq0$ follows from the above inequality with
$z=\acgX j$.

(d) It follows from parts (a)--(b) and the definition of $\eta_{j}$
that 
\begin{align*}
\eta_{j} & \leq\Gamma_{j}(u)+\frac{1}{2B_{j}}\|u-\acgX 0\|^{2}-\psi(\acgX j)\\
 & =\frac{\mu}{2}\|\acgX j-\acgY j\|^{2}-\frac{1}{B_{j}}\left\langle \acgX 0-\acgY j,\acgX j-\acgY j\right\rangle +\frac{1}{2B_{j}}\|\acgY j-\acgX 0\|^{2}\\
 & =\frac{1}{2B_{j}}\|\acgX j-\acgX 0\|^{2}-\frac{1}{2B_{j}}(1+\mu B_{j})\|\acgX j-\acgY j\|^{2}\\
 & =\frac{1}{2B_{j}}\|\acgX j-\acgX 0\|^{2}-\frac{1}{2B_{j}(1+\mu B_{j})}\|B_{j}\acgU j+\acgX j-\acgX 0\|^{2}.
\end{align*}
Multiplying both sides of the above inequality by $2B_{j}$ yields
the desired conclusion. 
\end{proof}
The next result presents the general iteration complexity of the algorithm,
i.e. Proposition~\ref{prop:acg_properties}(a).
\begin{proof}[Proof of Proposition~\ref{prop:acg_properties}(a)]
Let $\ell$ be the first iteration where 
\begin{equation}
\min\left\{ \frac{B_{\ell}^{2}}{4(1+\mu B_{\ell})},\frac{B_{\ell}}{2}\right\} \geq K_{\theta}^{2}\label{eq:spectral_A_bd}
\end{equation}
and suppose that the R-ACG has not stopped with failure before iteration
$\ell$. We show that it must stop with success at the end of the
$\ell^{{\rm th}}$ iteration. Combining the triangle inequality, the
successful check in step~3 of the method, \eqref{eq:spectral_A_bd},
and the relation $(a+b)^{2}\leq2a^{2}+2b^{2}$ for all $a,b\in\r,$
we first have that 
\begin{align*}
 & \|\acgU{\ell}\|^{2}+2\eta_{\ell}\\
 & \leq\max\left\{ \frac{1+\mu B_{\ell}}{A_{\ell}^{2}},\frac{1}{2B_{\ell}}\right\} \left(\frac{1}{1+\mu B_{\ell}}\|B_{\ell}\acgU{\ell}\|^{2}+4B_{\ell}\eta_{\ell}\right)\\
 & \leq\max\left\{ \frac{1+\mu B_{\ell}}{B_{\ell}^{2}},\frac{1}{2B_{\ell}}\right\} \left(\frac{2}{1+\mu B_{\ell}}\|B_{\ell}\acgU{\ell}+\acgX{\ell}-\acgX 0\|^{2}+2\|\acgX{\ell}-\acgX 0\|^{2}+4B_{\ell}\eta_{\ell}\right)\\
 & \leq\max\left\{ \frac{4(1+\mu B_{\ell})}{B_{\ell}^{2}},\frac{2}{B_{\ell}}\right\} \|\acgX{\ell}-\acgX 0\|^{2}\leq\frac{1}{K_{\theta}^{2}}\|\acgX{\ell}-\acgX 0\|^{2}\leq\theta^{2}\|\acgX{\ell}-\acgX 0\|^{2},
\end{align*}
and hence the method must terminate at the $\ell^{{\rm th}}$ iteration.
We now bound $\ell$ based on the requirement in \eqref{eq:spectral_A_bd}.
Solving for the quadratic in $B_{\ell}$ in the first bound of \eqref{eq:spectral_A_bd},
it is easy to see that $B_{\ell}\geq4\mu K_{\theta}^{2}+2K_{\theta}$
implies \eqref{eq:spectral_A_bd}. On the other hand, for the second
condition in \eqref{eq:spectral_A_bd}, it is immediate that $B_{\ell}\geq2K_{\theta}^{2}$
implies \eqref{eq:spectral_A_bd}. In view of \eqref{eq:B_j_bd} and
the previous two bounds, it follows that
\[
B_{\ell}\geq\frac{1}{L}\left(1+\sqrt{\frac{\mu}{4L}}\right)^{2(\ell-1)}\geq2K_{\theta}(1+2\mu K_{\theta}^{2})
\]
implies \eqref{eq:spectral_A_bd}. Using the bound $\log(1+t)\geq t/(1+t)$
for $t\geq0$ and the above bound on $\ell$, it is straightforward
to see that $\ell$ is on the same order of magnitude as in \eqref{eq:r_acg_total_compl}.
\end{proof}

\section{Refined ICG Points}

This appendix presents technical results related to the refined points
of the ICG methods.

The result below proves Lemma~\ref{lem:spec_refine} from the main
body of the paper.
\begin{proof}[Proof of Lemma~\ref{lem:spec_refine}]
(a) Using Proposition~\ref{prop:gen_refinement}(a), the definition
of $\hat{v}$, and the definitions of $\psi_{s}$ and $\psi_{n}$
in \eqref{eq:gen_acg_inputs}, we have that 
\begin{align*}
\hat{v} & \in\frac{1}{\lam}\left[\nabla\psi_{s}(\hat{\icgY{}})+\pt\psi_{n}(\hat{\icgY{}})+w-\icgY{}\right]+\nabla f_{1}(\hat{\icgY{}})-\nabla f_{1}(w)\\
 & =\frac{1}{\lam}\left[\lam\nabla f_{1}(w)+\lam f_{2}(\hat{\icgY{}})+(w-\icgY{})+\lam\pt h(\icgY{})\right]+\nabla f_{1}(\hat{\icgY{}})-\nabla f_{1}(w)\\
 & =\nabla f_{1}(\hat{\icgY{}})+\nabla f_{2}(\hat{\icgY{}})+\pt h(\hat{\icgY{}}),
\end{align*}
(b) Using assumption (A3), Proposition~\ref{prop:gen_refinement}(b),
the choice of $M$ in \eqref{eq:gen_acg_inputs}, and the fact that
$\Delta_{\mu}(\icgY r;\icgY{},v)\leq\varepsilon$, we first observe
that 
\begin{align}
 & \|\nabla f_{1}(\hat{\icgY{}})-\nabla f_{1}(\acgX 0)\|-L_{1}(\icgY{},\acgX 0)\|\icgY{}-\acgX 0\|\leq L_{1}(\icgY{},\hat{\icgY{}})\|\hat{\icgY{}}-\icgY{}\|\nonumber \\
 & \leq\frac{L_{1}(\icgY{},\hat{\icgY{}})\sqrt{2\Delta_{\mu}(\icgY r;\icgY{},v)}}{\sqrt{\lam M_{2}^{+}+1}}\leq\frac{\theta L_{1}(\icgY{},\hat{\icgY{}})}{\sqrt{\lam M_{2}^{+}+1}}\|\icgY{}-\acgX 0\|.\label{eq:deltaf1_bd}
\end{align}
Using now \eqref{eq:deltaf1_bd}, the choice of $M$ in \eqref{eq:gen_acg_inputs},
Proposition~\ref{prop:gen_refinement}(c) with $L(\cdot,\cdot)=\lam L_{2}(\cdot,\cdot)$,
the fact that $\sigma\leq1$, and the definition of $C_{\lam}(\cdot,\cdot)$,
we conclude that 
\begin{align*}
\|\hat{v}\| & \leq\frac{1}{\lam}\|v_{r}\|+\frac{1}{\lam}\|\icgY{}-\acgX 0\|+\|\nabla f_{1}(\hat{\icgY{}})-\nabla f_{1}(\acgX 0)\|\\
 & \leq\left[L_{1}(\icgY{},\acgX 0)+\frac{1+\theta}{\lam}+\frac{\theta\left[\lam M_{2}^{+}+1+\lam L_{1}(\icgY{},\hat{\icgY{}})+\lam L_{2}(\icgY{},\hat{\icgY{}})\right]}{\lam\sqrt{\lam M_{2}^{+}+1}}\right]\|\icgY{}-\acgX 0\|\\
 & \leq\left[L_{1}(\icgY{},\acgX 0)+\frac{2+\theta C_{\lam}(\icgY{},\hat{\icgY{}})}{\lam}\right]\|\icgY{}-\acgX 0\|.
\end{align*}
\end{proof}

\section{Spectral Functions}

\label{app:spectral} This section presents some results about spectral
functions as well as the proof of Propositions~\ref{prop:acg_implementation}.
It is assumed that the reader is familiar with the key quantities
given in Subsection~\ref{subsec:spectral_exploit} (e.g., see \eqref{eq:SVD_quants}
and \eqref{eq:vec_acg_inputs}).

We first state two well-known results \cite{lewis1995convex,Beck2017}
about spectral functions.
\begin{lem}
\label{lem:spec_prop}Let $\Psi=\Psi^{{\cal V}}\circ\sigma$ for some
absolutely symmetric function $\Psi^{{\cal V}}:\r^{r}\mapsto\r$.
Then, the following properties hold: 
\begin{itemize}
\item[(a)] $\Psi^{*}=(\Psi^{{\cal V}}\circ\sigma)^{*}=(\Psi^{{\cal V}})^{*}\circ\sigma$; 
\item[(b)] $\nabla\Psi=(\nabla\Psi^{{\cal V}})\circ\sigma$; 
\end{itemize}
\end{lem}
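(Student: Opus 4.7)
The plan is to prove each part separately using the classical theory of spectral functions, with the key tool being von Neumann's trace inequality $\inner{S}{X} \leq \inner{\sigma(S)}{\sigma(X)}$ for $S,X \in \r^{m\times n}$, where equality holds precisely when $S$ and $X$ admit a simultaneous SVD.

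For part (a), I would fix $S \in \r^{m\times n}$ and expand the conjugate as $\Psi^{*}(S) = \sup_{X} \{\inner{S}{X} - \Psi^{{\cal V}}(\sigma(X))\}$. The upper bound $\Psi^{*}(S) \leq (\Psi^{{\cal V}})^{*}(\sigma(S))$ follows immediately from von Neumann's inequality applied to $\inner{S}{X}$, together with the observation that $\Psi^{{\cal V}}(\sigma(X))$ depends only on the singular values of $X$. For the reverse direction, given any target vector $x \in \r^{r}$, I would build $X = P[\dg \tilde{x}]Q^{*}$ where $(P,Q)$ is an SVD pair of $S$ and $\tilde{x}$ is obtained from $x$ by reordering and sign-flipping to match $\sigma(X) = x_{\downarrow}$ (this is where absolute symmetry of $\Psi^{{\cal V}}$ is used, since it guarantees $\Psi^{{\cal V}}(\tilde{x}) = \Psi^{{\cal V}}(x)$). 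The equality case of von Neumann's inequality then gives $\inner{S}{X} = \inner{\sigma(S)}{x}$, yielding the matching lower bound.

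For part (b), I would deduce it from part (a) via conjugate duality or, more directly, from the subdifferential characterization: at any $X$ with SVD $X = P[\dg \sigma(X)]Q^{*}$, we have $\pt \Psi(X) = \{P[\dg s]Q^{*} : s \in \pt \Psi^{{\cal V}}(\sigma(X))\}$, which reduces in the differentiable case to the single matrix $\nabla\Psi(X) = P[\dg \nabla\Psi^{{\cal V}}(\sigma(X))]Q^{*}$; the notational shorthand $(\nabla\Psi^{{\cal V}})\circ\sigma$ in the statement is then to be interpreted as saying that the singular values of $\nabla \Psi(X)$ coincide (up to signs/reordering) with $\nabla \Psi^{{\cal V}}(\sigma(X))$ and share $X$'s SVD structure. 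Since both parts are classical and explicitly attributed in the excerpt to \cite{lewis1995convex,Beck2017}, the proof proposal largely reduces to invoking these references after sketching the von Neumann argument above.

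The main obstacle is essentially bookkeeping: carefully justifying that the supremum in the conjugate definition can be restricted to matrices sharing an SVD with $S$ (relying on absolute symmetry to absorb sign and permutation ambiguities in $\sigma$), and then transporting this to the gradient statement in part (b) which hides a nontrivial structural claim behind compact notation. No deep new ideas are needed beyond the von Neumann inequality and the definition of absolute symmetry.
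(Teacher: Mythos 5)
Your proposal is correct and is essentially the same route the paper takes: the paper offers no proof of this lemma, simply labeling it a well-known result and citing \cite{lewis1995convex,Beck2017}, and the von Neumann trace-inequality argument you sketch (upper bound for the conjugate from the inequality, lower bound by aligning the test matrix with the SVD frame of $S$ via absolute symmetry, then the gradient formula for part (b)) is precisely the standard proof contained in those references. No gaps beyond the bookkeeping you already flag.
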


\begin{lem}
\label{lem:spec_prox} Let $(\Psi,\Psi^{{\cal V}})$ be as in Lemma~\ref{lem:spec_prop},
the pair $(S,\acgMatX{})\in{\cal Z}\times\dom\Psi$ be fixed, and
the decomposition $S=P[\dg\sigma(S)]Q^{*}$ be an SVD of $S$, for
some $(P,Q)\in{\cal U}^{m}\times{\cal U}^{n}$. If $\Psi\in\cConv\r^{m\times n}$
and $\Psi^{{\cal V}}\in\cConv\r^{r}$, then for every $M>0$, we have
\[
S\in\pt\left(\Psi+\frac{M}{2}\|\cdot\|_{F}^{2}\right)(\acgMatX{})\iff\begin{cases}
\sigma(S)\in\pt\left(\Psi^{{\cal V}}+\frac{M}{2}\|\cdot\|^{2}\right)(\sigma(\acgMatX{})),\\
\acgMatX{}=P[\dg\sigma(\acgMatX{})]Q^{*}.
\end{cases}
\]
\end{lem}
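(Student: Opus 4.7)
The plan is to convert both subdifferential inclusions into optimality conditions for proximal minimization problems and then bridge the two via von Neumann's trace inequality. Specifically, by completing the square, $S\in\pt(\Psi+\tfrac{M}{2}\|\cdot\|_F^2)(\acgMatX{})$ holds if and only if $\acgMatX{}$ is a global minimizer of
\[
F(Y):=\Psi(Y)+\frac{M}{2}\left\|Y-\frac{S}{M}\right\|_F^2\qquad (Y\in\r^{m\times n}),
\]
and analogously the vector inclusion $\sigma(S)\in\pt(\Psi^{\cal V}+\tfrac{M}{2}\|\cdot\|^2)(\sigma(\acgMatX{}))$ is equivalent to $\sigma(\acgMatX{})$ being a global minimizer of
\[
F^{\cal V}(y):=\Psi^{\cal V}(y)+\frac{M}{2}\left\|y-\frac{\sigma(S)}{M}\right\|^2\qquad (y\in\r^r).
\]
Since $\Psi\in\cConv\r^{m\times n}$ and $\Psi^{\cal V}\in\cConv\r^r$, both problems are convex, and these equivalences are exact.

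The key bridging tool is von Neumann's trace inequality: $\inner{S}{Y}\le \inner{\sigma(S)}{\sigma(Y)}$ for every $Y$, with equality if and only if $S$ and $Y$ admit a simultaneous SVD (i.e., $Y=P[\dg\sigma(Y)]Q^*$ using the same unitaries as $S$, up to the standard sign/ordering ambiguities absorbed by the absolute symmetry of $\Psi^{\cal V}$). Using $\|Y\|_F=\|\sigma(Y)\|$ and that $\Psi(Y)=\Psi^{\cal V}(\sigma(Y))$, this yields the pointwise bound
\[
F(Y)=\Psi^{\cal V}(\sigma(Y))+\frac{M}{2}\|\sigma(Y)\|^2+\frac{\|\sigma(S)\|^2}{2M}-\inner{S}{Y}\ge F^{\cal V}(\sigma(Y))
\]
for every $Y$, with equality exactly when $Y$ shares the SVD $Y=P[\dg\sigma(Y)]Q^*$.

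For the forward direction ($\Rightarrow$), suppose $\acgMatX{}$ minimizes $F$. By the displayed inequality, $F(\acgMatX{})\ge F^{\cal V}(\sigma(\acgMatX{}))\ge\min_y F^{\cal V}(y)$. On the other hand, for any $y\in\r^r$ the matrix $Y_y:=P[\dg y]Q^*$ satisfies $\sigma(Y_y)=|y|^{\downarrow}$ (the nonincreasing rearrangement of the absolute values) and attains equality in von Neumann's bound, so $F(Y_y)=F^{\cal V}(y)$ after exploiting absolute symmetry of $\Psi^{\cal V}$. Minimizing over $y$ shows $\min F\le\min F^{\cal V}$; combined with the reverse chain of inequalities, all bounds are tight at $\acgMatX{}$. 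The equality case of von Neumann then forces $\acgMatX{}=P[\dg\sigma(\acgMatX{})]Q^*$, and $\sigma(\acgMatX{})$ minimizes $F^{\cal V}$, which is the desired vector inclusion. For the converse ($\Leftarrow$), assume the right-hand side holds. Then for any $Y$ the same chain gives $F(Y)\ge F^{\cal V}(\sigma(Y))\ge F^{\cal V}(\sigma(\acgMatX{}))$, while the hypothesis $\acgMatX{}=P[\dg\sigma(\acgMatX{})]Q^*$ makes von Neumann tight at $\acgMatX{}$, so $F(\acgMatX{})=F^{\cal V}(\sigma(\acgMatX{}))\le F(Y)$, proving $\acgMatX{}$ minimizes $F$.

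The main obstacle I anticipate is a careful handling of the equality case in von Neumann's inequality together with the absolute symmetry of $\Psi^{\cal V}$, since the SVD of $\acgMatX{}$ need not coincide a priori with the fixed SVD $(P,Q)$ of $S$ when singular values of $S$ repeat or vanish; absolute symmetry of $\Psi^{\cal V}$ (permutations and sign flips leave it invariant) is precisely what lets us reassemble $\acgMatX{}$ as $P[\dg\sigma(\acgMatX{})]Q^*$ while preserving both terms of $F$, so the argument needs this invariance explicitly. With that in hand, the two implications follow symmetrically from the two tightness statements above.
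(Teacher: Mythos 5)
Your proposal cannot be checked against an in-paper argument: the paper states this lemma without proof, presenting it as a known result and citing Lewis (1995) and Beck (2017), where it is derived from the conjugacy formula $(\Psi^{\cal V}\circ\sigma)^{*}=(\Psi^{\cal V})^{*}\circ\sigma$ and the characterization of subgradients of unitarily invariant functions. Your self-contained route --- rewriting both inclusions as prox problems and bridging them with von Neumann's trace inequality --- is the standard mechanism behind those references and is essentially correct: the chain $F(Y)\geq F^{\cal V}(\sigma(Y))$ for all $Y$, together with $F(P[\dg y]Q^{*})=F^{\cal V}(y)$ for all $y$ (which uses absolute symmetry of $\Psi^{\cal V}$ exactly as you say), gives $\min F=\min F^{\cal V}$ and makes both implications fall out of the tightness analysis.

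The one step you should tighten is the conclusion $Z=P[\dg\sigma(Z)]Q^{*}$ in the forward direction. The equality case of von Neumann only yields that $S$ and $Z$ admit \emph{some} simultaneous SVD, not one in the prescribed frame $(P,Q)$; when $S$ has repeated or zero singular values this is strictly weaker, and your closing remark about ``reassembling'' $Z$ does not by itself close the gap. The clean fix is available precisely because $M>0$: $F$ is strongly convex, hence has a unique minimizer. Since $\sigma(Z)$ minimizes $F^{\cal V}$ and $F(P[\dg\sigma(Z)]Q^{*})=F^{\cal V}(\sigma(Z))=\min F^{\cal V}=\min F$, the matrix $P[\dg\sigma(Z)]Q^{*}$ is also a minimizer of $F$, and uniqueness forces it to equal $Z$. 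With that one sentence added, the proof is complete; the converse direction as you wrote it needs no change.
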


We now present a new result about spectral functions. 
\begin{thm}
\label{thm:spectral_approx_subdiff}Let $(\Psi,\Psi^{{\cal V}})$
be as in Lemma~\ref{lem:spec_prop} and the point $\acgMatX{}\in\r^{m\times n}$
be such that $\sigma(\acgMatX{})\in\dom\Psi^{{\cal V}}$. Then for
every $\varepsilon\geq0$, we have $S\in\pt_{\varepsilon}\Psi(\acgMatX{})$
if and only if $\sigma(S)\in\pt_{\varepsilon(S)}\Psi^{{\cal V}}(\sigma(\acgMatX{}))$,
where 
\begin{equation}
\varepsilon(S):=\varepsilon-\left[\left\langle \sigma(\acgMatX{}),\sigma(S)\right\rangle -\left\langle \acgMatX{},S\right\rangle \right]\geq0.\label{eq:eps_s_spectral}
\end{equation}
Moreover, if $S$ and $Z$ have a simultaneous SVD, then $\varepsilon(S)=\varepsilon$. 
\end{thm}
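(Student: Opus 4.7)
The plan is to recast the $\varepsilon$-subdifferential inclusion as a Fenchel--Young inequality, then transfer this inequality between the matrix and vector worlds using the conjugate identity supplied by Lemma~\ref{lem:spec_prop}(a), namely $\Psi^* = (\Psi^{\mathcal{V}})^* \circ \sigma$. The upshot is that only a simple algebraic rearrangement is needed, and the $\sigma$-alignment assumption for the last claim reduces to von Neumann's trace identity.

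Concretely, I would first note the (purely functional) fact that for any proper $\Psi$ on $\mathbb{R}^{m\times n}$, $S\in\partial_\varepsilon\Psi(Z)$ is equivalent to $\Psi(Z)+\Psi^*(S)\le\langle S,Z\rangle+\varepsilon$. One direction is immediate from the definitions of $\partial_\varepsilon\Psi$ and $\Psi^*$; the converse uses the bound $\langle S,Z'\rangle-\Psi(Z')\le\Psi^*(S)$ to recover the $\varepsilon$-subgradient inequality at each $Z'$. With this in hand, I substitute $\Psi(Z)=\Psi^{\mathcal{V}}(\sigma(Z))$ and $\Psi^*(S)=(\Psi^{\mathcal{V}})^*(\sigma(S))$ from Lemma~\ref{lem:spec_prop}(a), rewrite the right-hand side by adding and subtracting $\langle\sigma(Z),\sigma(S)\rangle$, and use the defining relation $\varepsilon(S)=\varepsilon-[\langle\sigma(Z),\sigma(S)\rangle-\langle Z,S\rangle]$ to obtain
\[
\Psi^{\mathcal{V}}(\sigma(Z))+(\Psi^{\mathcal{V}})^*(\sigma(S))\le\langle\sigma(Z),\sigma(S)\rangle+\varepsilon(S),
\]
which is exactly the Fenchel--Young characterization of $\sigma(S)\in\partial_{\varepsilon(S)}\Psi^{\mathcal{V}}(\sigma(Z))$. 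Both steps are reversible, giving the claimed equivalence.

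The nonnegativity $\varepsilon(S)\ge0$ falls out of the same argument: Fenchel--Young for $\Psi^{\mathcal{V}}$ (which holds unconditionally, since the conjugate is the pointwise supremum defining it) yields $\Psi^{\mathcal{V}}(\sigma(Z))+(\Psi^{\mathcal{V}})^*(\sigma(S))\ge\langle\sigma(Z),\sigma(S)\rangle$, so combining this with the matrix Fenchel--Young inequality $\Psi^{\mathcal{V}}(\sigma(Z))+(\Psi^{\mathcal{V}})^*(\sigma(S))\le\langle S,Z\rangle+\varepsilon$ that encodes $S\in\partial_\varepsilon\Psi(Z)$ gives $\langle\sigma(Z),\sigma(S)\rangle-\langle S,Z\rangle\le\varepsilon$, i.e., $\varepsilon(S)\ge0$. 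Finally, for the simultaneous-SVD claim, writing $S=P(\dg\sigma(S))Q^*$ and $Z=P(\dg\sigma(Z))Q^*$ and using cyclicity of the trace gives $\langle S,Z\rangle=\operatorname*{tr}((\dg\sigma(S))(\dg\sigma(Z)))=\langle\sigma(S),\sigma(Z)\rangle$, so $\varepsilon(S)=\varepsilon$.

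The only place where something could genuinely go wrong is the application of Lemma~\ref{lem:spec_prop}(a) to a not-necessarily-convex $\Psi^{\mathcal{V}}$ (as happens when the theorem is invoked in Lemma~\ref{lem:psi_spec_props}(b) with $\Psi=\lambda[f_2+h]$). I would stress that the identity $\Psi^*=(\Psi^{\mathcal{V}})^*\circ\sigma$ and the Fenchel--Young characterization of $\partial_\varepsilon$ both make sense for any proper function (the conjugate is convex regardless), so no convexity of $\Psi^{\mathcal{V}}$ is actually needed. That observation is the main conceptual point to articulate clearly; everything else is a one-line rearrangement.
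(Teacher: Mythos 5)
Your proof is correct and follows essentially the same route as the paper's: both rewrite $S\in\partial_{\varepsilon}\Psi(Z)$ as the Fenchel--Young inequality $\Psi(Z)+\Psi^{*}(S)\leq\langle S,Z\rangle+\varepsilon$, transfer it to the vector side via Lemma~\ref{lem:spec_prop}(a), read off the equivalence and the nonnegativity of $\varepsilon(S)$ from the same rearrangement, and settle the simultaneous-SVD case by the trace identity $\langle S,Z\rangle=\langle\sigma(S),\sigma(Z)\rangle$. Your closing remark that neither the conjugate identity nor the Fenchel--Young characterization of $\partial_{\varepsilon}$ requires convexity of $\Psi^{\mathcal{V}}$ is a worthwhile clarification that the paper leaves implicit, but it does not change the argument.
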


\begin{proof}
Using Lemma~\ref{lem:spec_prop}(a), \eqref{eq:eps_s_spectral},
and the well-known fact that $S\in\pt_{\varepsilon}\Psi(\acgMatX{})$
if and only if $\varepsilon\geq\Psi(\acgMatX{})+\Psi^{*}(S)-\left\langle \acgMatX{},S\right\rangle $,
we have that $S\in\pt_{\varepsilon}\Psi(\acgMatX{})$ if and only
if 
\begin{align*}
\varepsilon(S) & =\varepsilon-\left[\left\langle \sigma(\acgMatX{}),\sigma(S)\right\rangle -\left\langle \acgMatX{},S\right\rangle \right]\\
 & \geq\Psi(\acgMatX{})+\Psi^{*}(S)-\left\langle \acgMatX{},S\right\rangle -\left[\left\langle \sigma(\acgMatX{}),\sigma(S)\right\rangle -\left\langle \acgMatX{},S\right\rangle \right]\\
 & =\Psi^{{\cal V}}(\sigma(\acgMatX{}))+(\Psi^{{\cal V}})^{*}(\sigma(S))-\left\langle \sigma(\acgMatX{}),\sigma(S)\right\rangle ,
\end{align*}
or, equivalently, $\sigma(S)\in\pt_{\varepsilon(S)}\Psi^{{\cal V}}(\sigma(\acgMatX{}))$
and $\varepsilon(S)\geq0$. To show that the existence of a simultaneous
SVD of $S$ and $Z$ implies $\varepsilon(S)=\varepsilon$ it suffices
to show that $\inner{\sigma(S)}{\sigma(\acgMatX{})}=\inner S{\acgMatX{}}$.
Indeed, if $S=P[\dg\sigma(S)]Q^{*}$ and $\acgMatX{}=P[\dg\sigma(\acgMatX{})]Q^{*}$,
for some $(P,Q)\in{\cal U}^{m}\times{\cal U}^{n}$, then we have 
\[
\inner S{\acgMatX{}}=\inner{\dg\sigma(S)}{P^{*}P[\dg\sigma(\acgMatX{})]Q^{*}Q}=\inner{\dg\sigma(S)}{\dg\sigma(\acgMatX{})}=\inner{\sigma(S)}{\sigma(\acgMatX{})}.
\]
\end{proof}

\section*{Acknowledgments}
 The authors would like to thank the two anonymous referees and the associate editor for their insightful comments on earlier drafts of this paper.

\bibliographystyle{plain}
\bibliography{Proxacc_ref}

\end{document}